\newtheorem{theorem}{Theorem}[section]
\newtheorem{lemma}[theorem]{Lemma}
\newtheorem{corollary}[theorem]{Corollary}
\newtheorem{proposition}[theorem]{Proposition}
\theoremstyle{definition}
\newtheorem{definition}[theorem]{Definition}
\newtheorem{note}[theorem]{Note}
\theoremstyle{remark}
\begin{document}

\title[Zagier polynomials. Part I.]{The Zagier modification of 
Bernoulli numbers and a polynomial 
extension. Part I.}

\author[A. Dixit]{Atul Dixit}
\address{Department of Mathematics,
Tulane University, New Orleans, LA 70118}
\email{adixit@tulane.edu}

\author[V. H. Moll]{Victor H. Moll}
\address{Department of Mathematics,
Tulane University, New Orleans, LA 70118}
\email{vhm@tulane.edu}

\author[C. Vignat]{Christophe Vignat}
\address{Department of Mathematics,
Tulane University, New Orleans, LA 70118 and 
L.S.S. Supelec, Universite d'Orsay, France}
\email{vignat@tulane.edu}

\subjclass{Primary 11B68, 33C45}

\date{\today}

\keywords{Bernoulli polynomials, Chebyshev polynomials,  umbral method, periodic sequences, Euler polynomials, generating functions, WZ-method.} 

\begin{abstract}
The modified
\begin{equation*}
B_{n}^{*} = \sum_{r=0}^{n} \binom{n+r}{2r} \frac{B_{r}}{n+r}, \quad n > 0
\end{equation*}
\noindent
introduced by D. Zagier in $1998$ are extended to the polynomial 
case by replacing $B_{r}$ by the Bernoulli polynomials $B_{r}(x)$. Properties 
of these new polynomials are established using the umbral 
method as well as classical techniques. The values of $x$ that yield periodic 
subsequences $B_{2n+1}^{*}(x)$ are classified. The strange $6$-periodicity of 
$B_{2n+1}^{*}$, established by Zagier, is 
explained by exhibiting a decomposition of this sequence as 
the sum of two parts with periods $2$ and $3$, respectively. Similar 
results for modifications of Euler numbers are stated.
\end{abstract}

\maketitle

\newcommand{\ba}{\begin{eqnarray}}
\newcommand{\ea}{\end{eqnarray}}
\newcommand{\ift}{\int_{0}^{\infty}}
\newcommand{\nn}{\nonumber}
\newcommand{\no}{\noindent}
\newcommand{\A}{\mathfrak{A}}
\newcommand{\B}{\mathfrak{B}}
\newcommand{\C}{\mathfrak{C}}
\newcommand{\D}{\mathfrak{D}}
\newcommand{\E}{\mathfrak{E}}
\newcommand{\pe}{\mathfrak{P}}
\newcommand{\Q}{\mathfrak{Q}}
\newcommand{\U}{\mathfrak{U}}
\newcommand{\lf}{\left\lfloor}
\newcommand{\rf}{\right\rfloor}
\newcommand{\vm}{{}}
\newcommand{\realpart}{\mathop{\rm Re}\nolimits}
\newcommand{\imagpart}{\mathop{\rm Im}\nolimits}

\newtheorem{Definition}{\bf Definition}[section]
\newtheorem{Thm}[Definition]{\bf Theorem} 
\newtheorem{Example}[Definition]{\bf Example} 
\newtheorem{Lem}[Definition]{\bf Lemma} 
\newtheorem{Cor}[Definition]{\bf Corollary} 
\newtheorem{Prop}[Definition]{\bf Proposition} 
\numberwithin{equation}{section}

\section{Introduction}
\label{sec-intro} 
\setcounter{equation}{0}

The Bernoulli numbers $B_{n}$, defined by the generating function 
\begin{equation}
\frac{t}{e^{t}-1} = \sum_{n=0}^{\infty} B_{n} \frac{t^{n}}{n!}
\label{bn-def}
\vm
\end{equation}
\noindent 
are rational numbers with $B_{2n+1} = 0$ for 
$n \geq 1$ and $B_{1} = - \tfrac{1}{2}$. The sequence $\{ B_{n} \}$ 
has remarkable properties and it appears in a variety of mathematical problems. 
Examples of such include the fact that the Riemann zeta function 
\begin{equation}
\zeta(s) = \sum_{n=1}^{\infty} \frac{1}{n^{s}} \vm
\end{equation}
\noindent 
evaluated at an even positive integer $s= 2n$ is a rational 
multiple of $\pi^{2n}$, the factor being 
\begin{equation}
\frac{\zeta(2n)}{\pi^{2n}}  = \frac{2^{2n-1}}{(2n)!} (-1)^{n-1} B_{2n}.
\vm
\end{equation}
\noindent
Their denominators are completely 
determined by the von Staudt-Clausen theorem: the denominator of $B_{2n}$ is 
the product of all primes $p$ such that $p-1$ divides $2n$ (see 
\cite{macmillan-2011a} for an elementary proof).   It is often the numerators 
of $B_{2n}$ that are the objects of interest. It is a remarkable mystery that 
there is no elementary formula associated to them. These numerators appear in 
connection to Fermat's last theorem (see \cite{ribenboim-1999a}) and also
in relation to the group of smooth structures on $n$-spheres 
(see \cite{kervaire-1963a}, page 530 and \cite{levine-1983a} for details). 

D. Zagier \cite{zagier-1998a} introduced the modified Bernoulli numbers 
\begin{equation}
B_{n}^{*} = \sum_{r=0}^{n} \binom{n+r}{2r} \frac{B_{r}}{n+r}, \quad n > 0
\label{zagier-mod}
\end{equation}
\noindent
and established the following \textit{amusing variant} of $B_{2n+1}=0$:

\smallskip

\begin{theorem}
\label{zagier-per6}
The sequence $B_{2n+1}^{*}$ is periodic of period $6$ with values 
$$\{ \tfrac{3}{4}, - \tfrac{1}{4}, \, - \tfrac{1}{4}, \, 
\tfrac{1}{4}, \, \tfrac{1}{4}, \, - \tfrac{3}{4}  \}. \vm $$
\end{theorem}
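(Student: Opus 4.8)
The plan is to reduce the periodicity of $B_{2n+1}^{*}$ to a statement about the poles of a generating function. First I would record the closed form of the inner binomial sum. Writing $g_{n}(z) = \sum_{r=0}^{n} \binom{n+r}{2r} z^{r}$, a short manipulation of geometric series gives the double generating function $\sum_{n \geq 0} g_{n}(z) q^{n} = (1-q)/(1-(2+z)q + q^{2})$, from which $g_{n}(z) = U_{n}(1 + \tfrac{z}{2}) - U_{n-1}(1 + \tfrac{z}{2})$ is a difference of Chebyshev polynomials of the second kind; this is the Chebyshev connection promised in the abstract. To install the awkward factor $1/(n+r)$ I would use $1/(n+r) = \int_{0}^{1} u^{n+r-1}\,du$, which turns the definition \eqref{zagier-mod} into
\[
B_{n}^{*} = \int_{0}^{1} u^{n-1}\, \widehat{g}_{n}(u)\, du, \qquad \widehat{g}_{n}(u) = \sum_{r=0}^{n} \binom{n+r}{2r} B_{r}\, u^{r},
\]
where $\widehat{g}_{n}$ is the umbral image of the \emph{polynomial} $g_{n}$ under the substitution $z^{r} \mapsto B_{r}$. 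Because $g_{n}$ is a genuine polynomial this substitution is unambiguous, which is exactly the feature the umbral method exploits.

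Next I would sum over $n$. Since $B_{2j+1}=0$ for $j\geq1$, apart from the single term $B_{1}=-\tfrac12$ the surviving contributions come from even $r$ and are governed by the identity $\sum_{k\geq0} B_{2k} t^{2k}/(2k)! = \tfrac{t}{2}\coth\tfrac{t}{2}$. Feeding the trigonometric form $g_{n}(z) = (\sin((n+1)\theta)-\sin(n\theta))/\sin\theta$, with $2\cos\theta = 2 + z$, into the integral and then forming $\Phi(x) := \sum_{n\geq1} B_{n}^{*} x^{n}$ should collapse the double sum into an explicit elementary function of $x$; the $u$-integration produces a logarithmic-derivative type expression whose singularities are inherited from the poles of $\coth$ at $t = 2\pi i\, k$ with $k$ a nonzero integer. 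The upshot I expect is that $\Phi(x)$ is meromorphic with all finite singularities at roots of unity.

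The periodicity and its $2$--$3$ splitting then fall out of a partial-fraction analysis. Extracting the odd part $\sum_{n\geq0} B_{2n+1}^{*} x^{2n+1} = \tfrac12(\Phi(x) - \Phi(-x))$ and reindexing, periodicity of period $6$ is equivalent to this series being rational with denominator $1 - x^{6}$, i.e. to all its poles lying at sixth roots of unity. Grouping those poles as $\{\pm 1\}$ and $\{1,\omega,\omega^{2}\}$ separates the sequence into a part of period $2$ and a part of period $3$: concretely I expect the residues at $x=\pm1$ to contribute the period-$2$ term $(-1)^{n}/4$ and the residues at the primitive cube roots of unity to contribute the period-$3$ term taking values $\tfrac12, 0, -\tfrac12$ on $n \equiv 0,1,2 \pmod 3$. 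Adding these gives exactly $\{\tfrac34, -\tfrac14, -\tfrac14, \tfrac14, \tfrac14, -\tfrac34\}$, which is simultaneously Zagier's assertion and the decomposition advertised in the abstract.

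The step I expect to be the genuine obstacle is the collapse in the second paragraph: carrying out the umbral Bernoulli summation and the $u$-integration so that the answer is \emph{manifestly} rational with only unit-circle poles, and in particular verifying that the infinitely many contributions from the poles of $\coth$ at $2\pi i\, k$ with $\abs{k} \geq 2$ cancel and leave a finite, hence periodic, sequence. This cancellation is the real content of the theorem; everything before it is formal and everything after it is bookkeeping. I would attack it by replacing $B_{2k}$ with its Fourier representation $(-1)^{k-1} 2(2k)!\,\zeta(2k)/(2\pi)^{2k}$, interchanging the sums over $k$ and over $r$, and isolating the $k=1$ term, which I anticipate supplies the entire periodic part while the tail telescopes away.
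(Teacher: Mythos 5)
Your endgame is sound and in fact matches the paper: the correct rational function does have its poles (in the reindexed variable) at $-1$ and the primitive cube roots of unity, and the residue bookkeeping you predict gives precisely the paper's identity $B_{2n+1}^{*} = \tfrac{(-1)^{n}}{4} + \tfrac{1}{2}U_{2n}(\tfrac{1}{2})$ in \eqref{value-per1}, i.e.\ the advertised $2$-plus-$3$ splitting with the right values. But the step you yourself flag as the obstacle is a genuine gap, and your guiding picture for it is wrong in two ways. First, the full generating function $\Phi$ is \emph{not} ``meromorphic with all finite singularities at roots of unity'': since $B_{2n}^{*} \sim B_{2n}/4n$ (the $r=n$ term dominates in \eqref{zagier-mod}), the series $\sum_{n} B_{n}^{*}z^{n}$ has radius of convergence zero, and its resummation, the paper's Theorem \ref{gf-bstar}, is $-\tfrac{1}{2}\log z - \tfrac{1}{2}\psi(z+1/z-1)$, which has a branch point at $z=0$ and infinitely many real poles (where $z+1/z = 1-m$, $m \geq 4$) lying off the unit circle and accumulating at the origin. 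No cancellation of ``$\coth$ poles'' occurs inside $\Phi$; rationality emerges \emph{only} for the odd part, and by a mechanism your sketch never isolates: the umbral reflection $-\mathfrak{B} = \mathfrak{B}+1$ converts $\Phi(-z)$ into $-\tfrac{1}{2}\log z - \tfrac{1}{2}\psi(z+1/z+2)$, so the odd part equals $\tfrac{1}{4}\left[\psi(w+3)-\psi(w)\right]$ with $w = z+1/z-1$, and the recurrence $\psi(w+3)-\psi(w) = \tfrac{1}{w}+\tfrac{1}{w+1}+\tfrac{1}{w+2}$ telescopes the two infinite pole sets against each other. The integer shift by exactly $3$ is where the periodicity comes from (the three terms are exactly your three pole groups, and the denominator is $z^{12}-1$ in the original variable, Theorem \ref{gf-odd11}); without identifying it, the partial-fraction analysis has nothing to act on.

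Second, your proposed attack on the obstacle stumbles on divergence at each interchange: after your formal manipulations one arrives at $\sum_{r}\tfrac{B_{r}}{2r}\bigl(z/(1-z)^{2}\bigr)^{r}$ (the paper's \eqref{fb-1}), which diverges for every $z \neq 0$ because $B_{2k} = (-1)^{k-1}2(2k)!\,\zeta(2k)/(2\pi)^{2k}$ grows factorially, and substituting this very expansion and summing over the pole index term by term hits the same factorial wall, so ``isolating the $k=1$ term'' cannot be executed as stated. The paper supplies the missing resummation device: the identity ${\rm{eval}}\{\log \mathfrak{B}(x)\} = \psi(x)$ (Theorem \ref{umbra-psi}), proved by a convergent integral against the density $\tfrac{\pi}{2}\,\mathrm{sech}^{2}(\pi u)$, which is what legitimately turns the zero-radius formal series into the digamma expression. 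Two small slips for completeness: the period-$2$ part forces a pole only at $-1$, not at $\pm 1$ (both periodic parts are mean-zero, so there is no pole at $+1$ at all); and your opening Chebyshev identity $g_{n}(z) = U_{n}(1+\tfrac{z}{2}) - U_{n-1}(1+\tfrac{z}{2})$ is correct (compare the paper's Lemma \ref{cheby-12}, which is the $\tfrac{1}{n+r}$-weighted analogue with $T_{n}$), so your formal setup is fine --- the gap is entirely in the middle.
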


One of the goals of this work is to extend this result to the polynomial
\begin{equation}
\label{zag-polydef}
B_{n}^{*}(x) = \sum_{r=0}^{n} \binom{n+r}{2r} \frac{B_{r}(x)}{n+r}, 
\quad n > 0
\end{equation}
\noindent
which is defined here as the \textit{Zagier polynomial}. Here 
$B_{r}(x)$ is the classical Bernoulli polynomial defined by the 
generating function 
\begin{equation}
\frac{te^{xt}}{e^{t}-1} = \sum_{n=0}^{\infty} B_{n}(x) \frac{t^{n}}{n!}.
\label{gf-berpoly}
\end{equation}
\noindent
The objective of the paper is to produce analogues of 
standard results for $B_{n}(x)$ for the Zagier polynomials
$B_{n}^{*}(x)$. For example, a generating function for these polynomials 
appears in Theorem \ref{gf-bstar-poly} as 
\begin{equation}
\sum_{n=1}^{\infty} B_{n}^{*}(x)z^{n} = - \frac{1}{2} \log z - 
\frac{1}{2} \psi \left( z + 1/z - 1 - x \right)
\label{gf-zagier1}
\end{equation}
\noindent
where $\psi(x) = \Gamma'(x)/\Gamma(x)$ is the digamma function. The generating 
function \eqref{gf-zagier1} really corresponds to the less elementary 
expression 
\begin{equation}
\sum_{n=0}^{\infty} B_{n}(x) z^{n} = \frac{1}{z} \zeta(2, 1/z-x+1).
\label{gf-bernoulli1}
\end{equation}
\noindent 
Here $\zeta(s,a)$ is the Hurwitz zeta function, defined by 
\begin{equation}
\zeta(s,a) = \sum_{n=0}^{\infty} \frac{1}{(n+a)^{s}}.
\end{equation}
\noindent
A derivation of \eqref{gf-bernoulli1} is given in Section \ref{gf-zagierpoly}.
The next example corresponds to the derivative rule 
$B_{n}'(x) = nB_{n-1}(x)$ for the Bernoulli polynomials. It 
appears in Theorem \ref{der-mod-zag} as 
\begin{equation*}
\frac{d}{dx} B_{n}^{*}(x) = \sum_{j=1}^{\lf \frac{n}{2} \rf} 
(2j-1) B_{2j-1}^{*}(x) \quad \text{ for } n \text{ even}
\end{equation*}
\noindent
and 
\begin{equation*}
\frac{d}{dx} B_{n}^{*}(x) = \frac{1}{2} + \sum_{j=1}^{\lf \frac{n}{2} \rf} 
2j B_{2j}^{*}(x)  \quad \text{ for } n \text{ odd}.
\end{equation*}
\noindent
Finally, the analogue of the symmetry 
relation $B_{n}(1-x) = (-1)^{n}B_{n}(x)$ is established as 
\begin{equation}
B_{n}^{*}(-x-3) = (-1)^{n}B_{n}^{*}(x).
\end{equation}
\noindent
This is the content of Theorem \ref{thm-symm}.

The original motivating factor for this work was to extend Theorem 
\ref{zagier-per6} to other values of $B_{2n+1}^{*}(x)$. The main result 
presented here is a classification of the values $x \in \mathbb{R}$ for which 
$B_{2n+1}^{*}(x)$ is a periodic sequence, Zagier's case being 
$x=0$. 

\begin{theorem}
\label{zag-thm1a}
Assume $\{ B_{2n+1}^{*}(x) \}$ is a periodic sequence. Then $x \in 
\{-3, \, -2, \, -1, \, 0 \}$ or $x = - \tfrac{3}{2}$ where 
$B_{2n+1}^{*} \left( - \tfrac{3}{2} \right) = 0.$ 
\end{theorem}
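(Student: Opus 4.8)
The plan is to read off periodicity directly from the generating function \eqref{gf-zagier1} by locating the singularities of its odd part. A real sequence is (eventually) periodic precisely when its generating function is rational with all poles at roots of unity, so the first step is to isolate
$$\sum_{n=0}^{\infty} B_{2n+1}^{*}(x)\, z^{2n+1} = \tfrac{1}{2}\bigl(G(x,z) - G(x,-z)\bigr),$$
where $G(x,z)$ denotes the right-hand side of \eqref{gf-zagier1}. Writing $v = z + 1/z$ and $c = -1-x$, and using that $v$ is invariant under $z \mapsto 1/z$ while $\log z$ changes sign, this odd part is governed by the digamma difference $\psi(c+v) - \psi(c-v)$, with the two $\log z$ contributions cancelling.

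Next I would expand this difference by its partial-fraction series $\psi(c+v)-\psi(c-v) = \sum_{k\ge 0} 2v/\bigl((c+k)^{2} - v^{2}\bigr)$ and substitute $v = z+1/z$, $w = z^{2}$. Each summand becomes a rational function of $w$ with denominator $w^{2} - \bigl((c+k)^{2} - 2\bigr)w + 1$, whose two reciprocal roots lie on the unit circle exactly when $|c+k| \le 2$ and are real and off the circle when $|c+k| > 2$. Thus the finitely many indices $k$ with $|k-1-x| \le 2$ are the only ones capable of producing admissible (root-of-unity) poles, and in Zagier's case $x=0$ these are precisely the terms responsible for the splitting of Theorem~\ref{zagier-per6} into the period-$2$ part (poles at $w = \pm1$, i.e. $v \in \{0,\pm2\}$) and the period-$3$ part (poles at primitive cube roots of unity, i.e. $v = \pm1$).

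The decisive step is to show that periodicity forces every contribution with $|k-1-x| > 2$ to carry no pole off the unit circle. Since such indices persist for all large $k$, their real, non-root-of-unity poles must disappear; I would track this through the pairing $k \leftrightarrow k'$ determined by $(c+k)^{2} = (c+k')^{2}$, i.e. $k + k' = 2(1+x)$, which can hold for admissible nonnegative integers only when $2(1+x) \in \mathbb{Z}$, pinning $x$ to a half-integer lattice. A residue computation then selects, among these candidates, the finitely many $x$ for which the surviving unit-circle poles actually sit at roots of unity. Finally I would invoke the symmetry $B_{n}^{*}(-x-3) = (-1)^{n}B_{n}^{*}(x)$ of Theorem~\ref{thm-symm}, under which the admissible set is invariant with $x=-\tfrac{3}{2}$ as the unique fixed point; this both halves the casework ($0 \leftrightarrow -3$ and $-1 \leftrightarrow -2$) and identifies $x=-\tfrac{3}{2}$ as the degenerate case in which the entire odd subsequence collapses to $0$, giving the list $\{-3,-2,-1,0\}\cup\{-\tfrac{3}{2}\}$.

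The hard part will be this necessity step, because \eqref{gf-zagier1} — like its companion \eqref{gf-bernoulli1} — is only an asymptotic identity: the underlying series diverges, the even-indexed subsequence $B_{2n}^{*}(x)$ being unbounded, so one cannot naively equate the odd part of the closed form with the convergent generating function of $\{B_{2n+1}^{*}(x)\}$ and read off residues term by term. The real content is to justify, at the level of the exact analytic (Hurwitz-zeta/digamma) structure, that boundedness of $B_{2n+1}^{*}(x)$ genuinely translates into the clean ``poles at roots of unity'' dichotomy, so that the infinite family of off-circle singularities is truly incompatible with periodicity except for the listed values. I expect this to require either an independent exact finite representation of $B_{2n+1}^{*}(x)$ (a Chebyshev or trigonometric sum) to control its growth, or a careful Borel/analytic-continuation argument separating the periodic part from the unbounded remainder.
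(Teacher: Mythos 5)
Your overall architecture — periodicity forces a rational generating function, a digamma partial-fraction analysis forces $2x\in\mathbb{Z}$, and the symmetry of Theorem \ref{thm-symm} halves the casework with $x=-\tfrac32$ as the self-symmetric, identically-zero case — is the paper's architecture. But the mechanism at the heart of your necessity step is miscomputed. You obtain the odd part by substituting $z\mapsto-z$ directly into \eqref{gf-zagier1}, arriving at $\psi(c+v)-\psi(c-v)$ with $v=z+1/z$, $c=-1-x$. That substitution is precisely what the paper avoids: $F_{B^{*}}(x;-z)$ is computed separately in Corollary \ref{gf-bstaralt-poly} using the umbral relation $-\B=\B+1$, and the correct odd generating function \eqref{diff-psi} is $\tfrac14\psi(v+2+x)-\tfrac14\psi(v-1-x)$, with \emph{both} arguments shifted in the same direction. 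By the reflection formula, your expression differs from this by $\tfrac{\pi}{4}\cot\left(\pi(v+2+x)\right)$, which has infinitely many poles for every $x$. This is fatal to your ``decisive step'': in your expansion $\sum_{k\geq 0}2v/\bigl((c+k)^{2}-v^{2}\bigr)$, \emph{every} pole $v=\pm(c+k)$ has residue $-1$, so when your pairing $(c+k)^{2}=(c+k')^{2}$ merges two poles the residues add to $-2$ and nothing ever cancels; your criterion would conclude that no $x$ whatsoever yields a rational generating function, contradicting Zagier's case $x=0$. The pairing idea is salvageable, but only when run on \eqref{diff-psi}: there the two pole families $v=1+x-k$ and $v=-2-x-k$ carry residues $+1$ and $-1$, coincidence occurs exactly when $3+2x\in\mathbb{Z}$, and the series telescopes to a rational function precisely then. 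In that corrected form your argument is a legitimate, arguably more elementary, alternative to the paper's Proposition \ref{shift-b} (proved there via $\Gamma$ and an essential-singularity argument), used together with Lemma \ref{rat-inv} as in Theorem \ref{two-z}. Your convergence worry, incidentally, is lighter than you fear: only the full series \eqref{gf-zagier1} is formal; $B_{2n+1}^{*}(x)$ grows at most exponentially, so the odd series converges near $z=0$ and agrees there with the analytic function in \eqref{diff-psi}, with no Borel argument needed.

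The second gap is the endgame. Once $2x\in\mathbb{Z}$ is known, ``a residue computation then selects the finitely many $x$'' does not finish the proof; the exact finite representation you anticipate needing is exactly what the paper deploys, deriving from \eqref{nice-reader2} the closed forms $B_{2n+1}^{*}(-k)=-\tfrac14U_{2n}(0)-\tfrac12\sum_{j=1}^{k-2}U_{2n}(j/2)$ (Theorem \ref{boddk}) and $B_{2n+1}^{*}\left(k+\tfrac12\right)=\tfrac12\sum_{r=0}^{k+1}U_{2n}\left(\tfrac{2r+1}{4}\right)$ (Theorem \ref{thm-khalf}). Three phenomena then occur that your pole dichotomy does not by itself resolve: first, exponential growth from $U_{2n}(3/2)$ and from $U_{2n}(5/4)$ (Lemma \ref{fivequarter}) eliminates $k\leq-5$, $k\geq2$ and all remaining half-integers except $x=\pm\tfrac12$; second, the borderline term $U_{2n}(1)=2n+1$ produces linear growth, so $x=-4$ and $x=1$ must be excluded even though $B_{2n+1}^{*}(-4)+n$ is $6$-periodic — a double pole at a root of unity, which your stated criterion ``rational with all poles at roots of unity'' would wrongly admit, since eventual periodicity additionally requires simple poles; third, for $x=\pm\tfrac12$ all poles do lie on the unit circle, but at arguments $\cos^{-1}\tfrac14$ and $\cos^{-1}\tfrac18$, and one must prove these are not rational multiples of $\pi$ — the paper does so via the algebraic-integer fact that $2\cos(\pi m/n)$ is an algebraic integer, hence an integer if rational. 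Without this last step your proposal cannot exclude $x=\pm\tfrac12$, which is exactly the subtlest part of the classification.
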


In the case of even degree, the natural result is expressed in terms of the 
difference $A_{2n}^{*}(x) = B_{2n}^{*}(x) - B_{2n}^{*}(-1)$. 

\begin{theorem}
\label{zag-thm1b}
Assume $\{ A_{2n}^{*}(x) \}$  is a periodic sequence. Then $x \in \{ -1, \, 
0, \, 1, \, 2 \}$. The period is $3$ for $x = -1$ and $x=2$ 
while  $A_{2n}^{*}(x)$
vanishes identically for $x=0$ and $x=1$.
\end{theorem}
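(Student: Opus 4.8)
The plan is to study the generating function of the even-indexed subsequence and translate periodicity into the location and order of its poles. Beginning from \eqref{gf-zagier1}, write $F(x,z) = -\tfrac12\log z - \tfrac12\psi(z+1/z-1-x)$ and extract the even part by averaging the substitutions $z\mapsto z$ and $z\mapsto -z$, so that $\sum_{n\ge1}B_{2n}^*(x)z^{2n} = \tfrac12\bigl(F(x,z)+F(x,-z)\bigr)$. The digamma argument depends on $z$ only through $w:=z+1/z$, with the two copies carrying the arguments $w-1-x$ and $-w-1-x$. Forming the generating function of $A_{2n}^*(x)=B_{2n}^*(x)-B_{2n}^*(-1)$ amounts to subtracting the $x=-1$ series; this cancels the $\log z$ and $\log(-z)$ terms outright and leaves only the digamma differences $\psi(w-1-x)-\psi(w)$ and $\psi(-w-1-x)-\psi(-w)$.

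The decisive step uses $\psi(s+1)-\psi(s)=1/s$: when $x$ is an integer these differences telescope to a finite sum, giving, for integer $x\ge-1$, $\sum_{n\ge1}A_{2n}^*(x)z^{2n} = \tfrac14\sum_{k=1}^{1+x}\bigl(\tfrac1{w-k}-\tfrac1{w+k}\bigr)$, a genuine rational function of $z$. I would then apply the standard criterion that a sequence is purely periodic exactly when its generating function is rational with only simple poles, all at roots of unity. Since $\tfrac1{w-k}=\tfrac{z}{z^2-kz+1}$, its poles lie on the unit circle precisely when $|k|\le 2$; the borderline $k=\pm2$ is degenerate because $w\mp2=(z\mp1)^2/z$, producing a \emph{double} pole at $z=\pm1$ and hence a linearly growing, non-periodic term. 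Only $|k|\le1$ is therefore admissible, which forces the summation range to collapse; the symmetry $A_{2n}^*(x)=A_{2n}^*(-x-3)$ coming from Theorem \ref{thm-symm} folds the resulting list about $x=-\tfrac32$ and reduces the work to $x\ge-\tfrac32$. The empty-sum cases give the identically vanishing subsequences, while the single surviving difference $\tfrac1{w-1}-\tfrac1{w+1}=\tfrac{2}{w^2-1}$, equivalently $\tfrac{z^2}{2(z^4+z^2+1)}=\tfrac12\sum_{m\ge0}(z^{6m+2}-z^{6m+4})$, displays the period $3$ directly. This yields both the finite list of admissible $x$ and the period/vanishing dichotomy.

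The main obstacle is the converse: excluding periodicity for every $x$ outside this list. For non-integer $x$ the digamma differences no longer telescope, and I would invoke the reflection formula $\psi(1-s)-\psi(s)=\pi\cot\pi s$—the same device that produces the exceptional value in the odd case, Theorem \ref{zag-thm1a}—to reduce the transcendental remainder to a term proportional to $\cot(\pi(w+x))$. Its poles occur where $w=z+1/z$ takes the non-integer values in $\mathbb{Z}-x$, that is, at points which are either off the unit circle or on it but not roots of unity, and they cannot be cancelled by the subtracted reference, whose remainder contributes only integer values of $w$. Either way periodicity fails, which closes the classification. Some care is still required to confirm that the double pole at $w=\pm2$ is not accidentally annihilated by a neighbouring term and to handle the half-integer reflection point in parallel with the treatment of $x=-\tfrac32$ for the odd subsequence.
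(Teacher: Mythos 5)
Your computation for integer $x$ is correct, and it is in essence the paper's own argument transported to the level of generating functions: the partial fractions you obtain after telescoping, $\tfrac14\sum_{k=1}^{x+1}\bigl(\tfrac{1}{w-k}-\tfrac{1}{w+k}\bigr)$ with $\tfrac{1}{w-k}=\tfrac{z}{z^{2}-kz+1}$, are precisely the generating functions of the Chebyshev values $U_{2n-1}(k/2)$, so your pole trichotomy ($k=1$: simple poles at primitive sixth roots of unity, period $3$; $k=2$: double pole at $z=1$ since $w-2=(z-1)^{2}/z$, linear growth; $k\geq 3$: real poles off the unit circle, exponential growth) is the generating-function shadow of the paper's representation $A_{2n}^{*}(u)=\tfrac12\sum_{j}U_{2n-1}(\cdot)$, obtained by iterating \eqref{nice-reader}, together with the values $U_{2n-1}(0)=0$, $U_{2n-1}(1/2)$ $3$-periodic and $U_{2n-1}(1)=2n$. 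Your rational function agrees with what Lemma \ref{bstar-even} gives after applying $\psi(t+1)=\psi(t)+1/t$. Two cautions on this direct half: the passage through $\psi(-w-1-x)$ and $\log(-z)$ is only formal (the digamma has no expansion as its argument tends to $-\infty$); the legitimate starting point is Corollary \ref{gf-bstaralt-poly}, which puts $\psi(w+2+x)$ in the second slot as in \eqref{sum-psi} --- your telescoping then goes through verbatim, and the reflection-formula cotangents you would otherwise have to track cancel exactly because $x$ is an integer, which is why your final formula is nonetheless right. Also, periodicity requires poles at roots of unity, not merely on the unit circle; this is harmless here since $k=1$ happens to give sixth roots of unity and distinct $k$ give distinct quadratic factors (so the $k=2$ double pole indeed cannot be annihilated, as you suspected).

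There are, however, two genuine gaps. First, a normalization error that makes your proof establish a different statement: you adopt the introduction's formula $A_{2n}^{*}(x)=B_{2n}^{*}(x)-B_{2n}^{*}(-1)$, whereas the theorem is stated for the definition actually used in Section \ref{sec-periodic}, namely $A_{2n}^{*}(x)=B_{2n}^{*}(-1-x)-B_{2n}^{*}(-1)$. Under your convention, your own generating function shows that $x=0$ is the period-$3$ case, $x=1$ already carries the double pole, and the admissible set is $\{-3,-2,-1,0\}$ (vanishing at $x=-1,-2$); the claims ``$x\in\{-1,0,1,2\}$'' and ``vanishes identically for $x=0$ and $x=1$'' are false as you have set things up, and become true only after the substitution $x\mapsto -1-x$, equivalently via the symmetry $A_{2n}^{*}(u)=A_{2n}^{*}(1-u)$ coming from Theorem \ref{thm-symm} and \eqref{test-11}. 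The paper's inconsistent definition shares the blame, but a correct write-up must fix the convention explicitly. Second, your exclusion of non-integer $x$ is a plan rather than a proof. The clean completion is the paper's route: by Lemma \ref{rat-inv} the rational generating function must be a rational function of $w=z+1/z$, and Proposition \ref{shift-b} (as in Theorem \ref{two-z} and Corollary \ref{two-z1}) then yields an integrality constraint. In fact, because the even case involves \emph{two} shifts of the same $\psi(w)$, a pole count shows rationality already forces $x\in\mathbb{Z}$: for $2x$ odd the poles of $\psi(w-1-x)$ and $\psi(w+2+x)$ lie at half-integers and cannot be cancelled by $2\psi(w)+1/w$, so, unlike the odd case, no separate half-integer analysis is required. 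Your cotangent sketch can be pushed through to the same end (the remainder has poles at $w\in\mathbb{Z}-x$ with $w$ arbitrarily large, which pull back to $z$ off the unit circle), but as written it does not yet close the converse.
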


An outline of the paper is given next. 
Section \ref{sec-umbral} contains a  basic introduction to the umbral 
calculus with a special emphasis on the operational rules for the Bernoulli 
umbra. Section \ref{modified-ber} gives the generating function of the 
modified Bernoulli numbers $B_{n}^{*}$ and this is used to give a proof 
of the $6$-periodicity of $B_{2n+1}^{*}$. An inversion formula expressing 
$B_{n}$ in terms of $B_{n}^{*}$ is given in Section \ref{inv-modber1}. The
proof extends to the polynomial case.  The generating function for the 
Zagier polynomial $B_{n}^{*}(x)$ is established in Section \ref{gf-zagierpoly}
and an introduction to the arithmetic properties of special values of these 
polynomials appears in Section \ref{sec-arithmetic}. Expressions for the 
derivatives of the Zagier polynomials are given in Section 
\ref{sec-derivatives}. Some binomials sums employed in the proof of these 
results are given in Section \ref{sec-binsums}. The basic properties of 
Chebyshev polynomials are reviewed in Section \ref{sec-chebyshev} and used 
in Section \ref{sec-representation} to give a representation of the 
Zagier polynomials in terms of Bernoulli and Chebyshev polynomials and also 
to prove a symmetry property of $B_{n}^{*}(x)$ in Section 
\ref{mod-ber-poly-reflection}. Section 
\ref{sec-periodic} contains 
one of the main results: the classification of all periodic sequences of the 
form $B_{2n+1}^{*}(x)$. This result extends the original theorem of D. Zagier
on the $6$-periodicity of $B_{2n+1}^{*}$. Several additional properties  of 
the Zagier polynomials are stated in Section \ref{sec-additional}. The 
results discussed in the present paper can be extended without
difficulty to the case of Euler polynomials. These extensions 
are stated in  Section \ref{sec-euler} and used in Section \ref{sec-duplication}
to establish a duplication formula for Zagier polynomials.

\section{The umbral calculus}
\label{sec-umbral} 
\setcounter{equation}{0}

In the classical \textit{umbral calculus}, as introduced by J. Blissard 
\cite{blissard-1861a},  the terms $a_{n}$ of a sequence are
formally replaced by the powers ${\mathfrak{a}}^{n}$ of a new variable 
$\mathfrak{a}$, named the \textit{umbra} of the sequence $\{ a_{n} \}$. The 
original sequence is recovered by the \textit{evaluation map} 
\begin{equation}
{\rm{eval}} \left\{ \mathfrak{a}^{n} \right\} = a_{n}.
\end{equation}
\noindent
The introduction of an umbra for $\{ a_{n} \}$ requires a 
\textit{constitutive equation} that reflects the properties of the original 
sequence. These ideas are illustrated with the umbra $\B$ of the Bernoulli 
numbers $\{ B_{n} \}$.

An alternative approach to \eqref{bn-def}, as a definition for the 
Bernoulli numbers $B_{n}$,  is to use the equivalent recursion formula
\begin{equation}
\sum_{k=0}^{n-1} \binom{n}{k} B_{k} = 0, \quad \text{ for } n > 1,
\end{equation}
\noindent
complemented by the initial condition $B_{0}=1$. In terms of the Bernoulli 
umbra $\B$, this recursion is written as 
\begin{equation}
- \B  =   \B + 1. \label{ber-umbrae}
\end{equation}
\noindent
This is a  constitutive equation for the Bernoulli umbra and the 
numbers $B_{n}$ are then obtained via the evaluation map
\begin{equation}
{\rm{eval}} \{ \mathfrak{B}^{n} \} = B_{n}.
\end{equation}
\noindent
The umbral method is illustrated by computing the 
first few values of $B_{n}$, starting with 
the initial condition $B_{0} = 1$. The choice  $n=2$ in \eqref{ber-umbrae} 
gives 
\begin{equation}
\mathfrak{B}^{2} = (\mathfrak{B}+1)^{2} = \mathfrak{B}^{2} + 
2\mathfrak{B}^{1} + \mathfrak{B}^{0}.
\end{equation}
\noindent
The evaluation map then gives $B_{2} = B_{2} + 2B_{1} + B_{0}$ that 
simplifies to  $2B_{1}+B_{0} = 0$ and this yields 
$B_{1} = -1/2$. Similarly, $n=3$ gives 
\begin{equation}
\B^{3} = (\B+1)^{3} = \B^{3} + 3 \B^{2} + 3 \B^{1}  + \B^{0},
\end{equation}
\noindent
and the evaluation map produces 
$B_{3} + 3 B_{2} + 3 B_{1}  + B_{0} = B_{3}$ and 
$B_{2} = \tfrac{1}{6}$ is obtained. The 
reader will find more details about these ideas in \cite{gessel-2003a}.

The evaluation map of the Bernoulli umbra $\B$ may be defined at the level 
of generating functions by 
\begin{equation}
{\rm{eval}} \left\{ \exp( t \B) \right\} = \frac{t}{e^{t}-1}.
\label{bn-umbrae1}
\end{equation}
\noindent
Similarly, the 
extension of \eqref{bn-umbrae1} to the umbrae $\B(x)$ for the Bernoulli
polynomials in \eqref{gf-berpoly} is defined by 
\begin{equation}
{\rm{eval}} \left\{ \exp( t \B(x)) \right\} = \frac{te^{xt}}{e^{t}-1}.
\label{umbra-berpoly}
\end{equation}

It is a general statement about umbral calculus that the operation 
$\textit{eval}$ is linear. Moreover, expressions independent of the 
corresponding umbra are to be treated as constant with respect 
to $\textit{eval}$. Some further 
operational rules, particular for the Bernoulli umbra,  are stated next. 

\begin{lemma}
\label{cool-sum}
The relation 
\begin{equation}
{\rm{eval}} \{ \B(x) \} =  {\rm{eval}} \{x + \B \}.
\label{umbra-rel1}
\end{equation}
\noindent
holds.
\end{lemma}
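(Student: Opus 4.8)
The plan is to prove the identity $\mathrm{eval}\{\B(x)\} = \mathrm{eval}\{x + \B\}$ by comparing generating functions, which is the most direct route given the umbral definitions already established in the excerpt. The key observation is that equation \eqref{umbra-berpoly} specifies the evaluation of $\exp(t\B(x))$ as $te^{xt}/(e^t-1)$, while equation \eqref{bn-umbrae1} specifies that $\mathrm{eval}\{\exp(t\B)\} = t/(e^t-1)$.

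First I would expand both sides as power series in $t$ and match coefficients. The left-hand side, $\mathrm{eval}\{\exp(t\B(x))\}$, has Taylor coefficients $\mathrm{eval}\{\B(x)^n\}/n! = B_n(x)/n!$ by definition of the Bernoulli-polynomial umbra. For the right-hand side I would compute $\mathrm{eval}\{\exp(t(x+\B))\}$. Since $x$ is a constant with respect to $\mathrm{eval}$ (treating expressions independent of the umbra as constants, as stated in the excerpt), the factor $e^{xt}$ pulls outside the evaluation map, giving $\mathrm{eval}\{e^{xt}\exp(t\B)\} = e^{xt}\,\mathrm{eval}\{\exp(t\B)\} = e^{xt}\cdot t/(e^t-1)$ by \eqref{bn-umbrae1} and the linearity of $\mathrm{eval}$. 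But $te^{xt}/(e^t-1)$ is exactly the generating function for the Bernoulli polynomials in \eqref{gf-berpoly}, so the two generating functions agree. Comparing coefficients of $t^n/n!$ then yields $\mathrm{eval}\{\B(x)^n\} = \mathrm{eval}\{(x+\B)^n\}$ for every $n$, and since \eqref{umbra-rel1} is the statement at the level of the umbrae $\B(x)$ and $x+\B$, the claim follows.

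The only genuine subtlety, and the step I expect to require the most care, is justifying that $x$ may be treated as a constant and factored through $\mathrm{eval}$ at the generating-function level. Strictly, one must check that expanding $(x+\B)^n$ by the binomial theorem and then applying linearity of $\mathrm{eval}$ term by term is legitimate; this gives $\mathrm{eval}\{(x+\B)^n\} = \sum_{k=0}^n \binom{n}{k} x^{n-k}\,\mathrm{eval}\{\B^k\} = \sum_{k=0}^n \binom{n}{k} x^{n-k} B_k$, which is precisely the classical binomial formula $B_n(x) = \sum_{k=0}^n \binom{n}{k} B_k x^{n-k}$ for the Bernoulli polynomials. Thus one could equally well run the argument purely at the level of finite sums, avoiding generating functions altogether and appealing directly to the well-known representation of $B_n(x)$ in terms of the $B_k$. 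Either route is short; the generating-function version is cleaner, but the coefficient-comparison version makes transparent that the identity is nothing more than the umbral encoding of the standard expansion of the Bernoulli polynomial.
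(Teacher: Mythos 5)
Your proposal is correct and follows essentially the same route as the paper, whose proof consists precisely of comparing the two generating-function identities ${\rm{eval}}\{\exp(t\B(x))\} = te^{xt}/(e^{t}-1)$ and ${\rm{eval}}\{\exp(t\B)\} = t/(e^{t}-1)$; you have merely made explicit the factoring of $e^{xt}$ through the evaluation map and the coefficient comparison that the paper leaves implicit. Your closing remark that the identity is the umbral encoding of $B_{n}(x) = \sum_{k=0}^{n}\binom{n}{k}B_{k}x^{n-k}$ is a correct observation but not a different proof.
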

\begin{proof}
This comes directly from
\begin{equation}
{\rm{eval}} 
\left\{ \exp(t \B(x)) \right\} = \frac{te^{xt}}{e^{t}-1} \text{ and }
{\rm{eval}} \left\{ \exp(t \B) \right\} = \frac{t}{e^{t}-1}.
\end{equation}
\end{proof}

\begin{lemma}
\label{taylor}
Let $P$ be a polynomial. Then 
\begin{equation}
{\rm{eval}} \left\{ P( x + \B + 1 ) \right \} = 
{\rm{eval}} \left\{ P( x + \B ) \right \} + P'(x).
\end{equation}
\end{lemma}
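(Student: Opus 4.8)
The plan is to reduce the statement to monomials and then compare exponential generating functions. Since ${\rm{eval}}$ is linear and the map $P \mapsto P'(x)$ is linear as well, it suffices to prove the identity when $P(u) = u^{n}$ for each integer $n \geq 0$; the general case follows by expanding $P$ in the monomial basis and summing. So the target reduces to showing
\begin{equation*}
{\rm{eval}} \{ (x + \B + 1)^{n} \} = {\rm{eval}} \{ (x + \B)^{n} \} + n x^{n-1}
\end{equation*}
\no
for every $n$, noting that $n x^{n-1}$ is precisely $\tfrac{d}{dx} x^{n}$ and that the case $n=0$ is immediate.

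To handle all $n$ at once, I would package both sides into exponential generating functions. Here the factor $e^{(x+1)t}$ (respectively $e^{xt}$) is independent of the umbra $\B$ and is therefore treated as a constant by ${\rm{eval}}$. Using this together with \eqref{bn-umbrae1}, I would write
\begin{equation*}
\sum_{n=0}^{\infty} {\rm{eval}} \{ (x+\B+1)^{n} \} \frac{t^{n}}{n!}
= e^{(x+1)t} \, {\rm{eval}} \{ \exp(t\B) \}
= \frac{t \, e^{(x+1)t}}{e^{t}-1},
\end{equation*}
\no
and likewise $\sum_{n=0}^{\infty} {\rm{eval}} \{ (x+\B)^{n} \} \, t^{n}/n! = t \, e^{xt}/(e^{t}-1)$.

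The decisive step is then the cancellation $e^{(x+1)t} - e^{xt} = e^{xt}(e^{t}-1)$, which collapses the difference of these two generating functions and removes the denominator:
\begin{equation*}
\frac{t \, e^{(x+1)t}}{e^{t}-1} - \frac{t \, e^{xt}}{e^{t}-1}
= \frac{t \, e^{xt}(e^{t}-1)}{e^{t}-1}
= t \, e^{xt}
= \sum_{n=1}^{\infty} n x^{n-1} \frac{t^{n}}{n!}.
\end{equation*}
\no
Matching the coefficient of $t^{n}/n!$ on both sides yields exactly the monomial identity above, and linearity of ${\rm{eval}}$ finishes the lemma.

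I do not expect a genuine obstacle here: the result is essentially the classical difference relation $B_{n}(x+1) - B_{n}(x) = n x^{n-1}$ recast umbrally, since ${\rm{eval}}\{(x+\B)^{n}\} = B_{n}(x)$ by (the generating-function form of) Lemma \ref{cool-sum}. The only points requiring care are the justification that one may factor the umbra-free exponential out of ${\rm{eval}}$ and read off the identity coefficientwise at the level of exponential generating functions, and the algebraic observation that the difference of numerators supplies precisely the factor $e^{t}-1$ needed to cancel the denominator.
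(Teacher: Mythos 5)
Your proof is correct, but it takes a different route from the paper's. The paper's proof (which is only one line) reduces to monomials by linearity, exactly as you do, but then verifies the monomial case directly from the constitutive equation \eqref{ber-umbrae}: expanding $(x+\B+1)^{n}$ via $\B+1=-\B$ gives ${\rm{eval}}\{(x+\B+1)^{n}\}-{\rm{eval}}\{(x+\B)^{n}\} = \sum_{k}\binom{n}{k}x^{n-k}\left((-1)^{k}-1\right)B_{k}$, which collapses to $nx^{n-1}$ because only $k=1$ survives ($B_{1}=-\tfrac12$ and $B_{2k+1}=0$ for $k\geq 1$). You instead work at the level of exponential generating functions via \eqref{bn-umbrae1}, where the identity $e^{(x+1)t}-e^{xt}=e^{xt}(e^{t}-1)$ cancels the denominator and produces $te^{xt}$, whose coefficients are $nx^{n-1}$. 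The two mechanisms are equivalent in substance---\eqref{ber-umbrae} is the coefficientwise form of the symmetry of $t/(e^{t}-1)$, and your cancellation is the generating-function form of the classical difference relation $B_{n}(x+1)-B_{n}(x)=nx^{n-1}$, as you observe---but they buy different things. The paper's argument is finitary and purely umbral-algebraic: no infinite sums, nothing to interchange. Yours treats all $n$ uniformly and makes visible \emph{why} a derivative appears, at the modest cost of the two points you correctly flag: factoring the umbra-free exponential out of ${\rm{eval}}$ and reading off the identity coefficientwise, both of which are legitimate since ${\rm{eval}}$ acts coefficientwise on formal power series in $t$. Note also that your route uses only the generating-function characterization of $\B$ and never needs to invoke $B_{1}=-\tfrac12$ or the vanishing of the higher odd Bernoulli numbers explicitly; the paper's monomial verification implicitly relies on them (or on repeated application of the substitution rule), so your argument is arguably the more self-contained of the two.
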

\begin{proof}
This is verified first for monomials using \eqref{ber-umbrae} and then extended 
by linearity to the polynomial case. 
\end{proof}

The next step is to present a procedure to evaluate nonlinear functions of 
the Bernoulli polynomials. This can be done using the umbral approach but 
we introduce here an equivalent probabilistic formalism that is easier to 
use in a variety of examples. These two approaches will be described further 
in \cite{dixit-2012b} where some of the results presented in 
\cite{yu-2012a} will be established using this formalism. 

The notation 
\begin{equation}
\mathbb{E} \left[ h(X) \right] = \int_{\mathbb{R}} h(x) \, f_{X}(x) \, dx
\label{int-expec}
\end{equation}
\noindent
is used here for the expectation operator based on the random variable $X$ with 
probability 
density $f_{X}$. The class of admissible functions $h$ is restricted by the 
existence of the integral \eqref{int-expec}. The equation \eqref{eval-98} 
shows that a probabilistic equivalent of the \textit{eval} operator of umbral 
calculus is the expectation operator with respect to the probability 
distribution \eqref{form-lb}.

\begin{theorem}
\label{density-B}
There exists a real valued 
random variable $L_{B}$ with probability density $f_{L_{B}}(x)$
such that, for all admissible functions $h$, 
\begin{equation}
{\rm{eval}}\{ h(\mathfrak{B}(x)) \} = 
\mathbb{E} \left[ h( x - 1/2 + i L_{B}) \right]
\label{eval-98}
\end{equation}
\noindent
where the expectation is defined in \eqref{int-expec}. The density 
of $L_{B}$ is given by 
\begin{equation}
f_{L_{B}}(x) = \frac{\pi}{2} {\rm{sech}}^{2}(\pi x), \quad \text{ for } 
x \in \mathbb{R}.
\label{form-lb}
\end{equation}
\end{theorem}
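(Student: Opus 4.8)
The plan is to reduce the assertion \eqref{eval-98} to a single statement about characteristic functions, using that both $\operatorname{eval}$ and the expectation $\mathbb{E}[\,\cdot\,]$ are linear functionals. Since \eqref{umbra-berpoly} identifies the Bernoulli polynomials as the moments of the umbra, $\operatorname{eval}\{\mathfrak{B}(x)^{n}\}=B_{n}(x)$, it is enough to check \eqref{eval-98} for the exponential family $h(y)=e^{ty}$ and then recover the general admissible case by expanding in $t$ and matching moments, followed by a routine approximation argument. First I would record that, for $h(y)=e^{ty}$, the left-hand side of \eqref{eval-98} is exactly the generating function \eqref{umbra-berpoly}, namely $te^{xt}/(e^{t}-1)$.

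Next, because $x-\tfrac12$ is deterministic, the right-hand side factors as
\begin{equation*}
\mathbb{E}\left[e^{t(x-1/2+iL_{B})}\right]=e^{t(x-1/2)}\,\mathbb{E}\left[e^{itL_{B}}\right]=e^{t(x-1/2)}\,\varphi_{L_{B}}(t),
\end{equation*}
where $\varphi_{L_{B}}(t)=\mathbb{E}[e^{itL_{B}}]$ is the characteristic function of $L_{B}$. This is precisely why the factor $i$ appears in \eqref{eval-98}: it turns the (singular) exponential moments of the umbra into a genuine characteristic function defined for all real $t$. Equating the two sides and cancelling $e^{t(x-1/2)}$ shows that the theorem is equivalent to the single identity
\begin{equation*}
\varphi_{L_{B}}(t)=\frac{t e^{t/2}}{e^{t}-1}=\frac{t/2}{\sinh(t/2)}.
\end{equation*}

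It then remains to confirm that \eqref{form-lb} is a bona fide probability density and to compute its characteristic function. The first point is immediate, since $f_{L_{B}}\ge 0$ and $\int_{\mathbb{R}}\tfrac{\pi}{2}\operatorname{sech}^{2}(\pi x)\,dx=\tfrac12\left[\tanh(\pi x)\right]_{-\infty}^{\infty}=1$. For the second, I would evaluate
\begin{equation*}
\varphi_{L_{B}}(t)=\frac{\pi}{2}\int_{-\infty}^{\infty}e^{itx}\operatorname{sech}^{2}(\pi x)\,dx
\end{equation*}
by contour integration, closing the contour in the upper half-plane for $t>0$ and summing the residues at the double poles $x=i\left(n+\tfrac12\right)$, $n\ge 0$, of $\operatorname{sech}^{2}(\pi x)$. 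Each residue contributes a term proportional to $e^{-t(n+1/2)}$, so the residues form a geometric series summing to $1/(2\sinh(t/2))$, which reproduces the required closed form; the case $t<0$ follows from the evenness of $f_{L_{B}}$.

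The main obstacle is this residue computation. The poles are of order two, so extracting each residue requires the Laurent expansion of $\operatorname{sech}^{2}(\pi x)$ near $x=i\left(n+\tfrac12\right)$ together with the linear term of $e^{itx}$, and some care is needed to assemble the resulting series into the stated closed form. Once this Fourier transform is established, the equivalence set up above yields \eqref{eval-98} for all admissible $h$, completing the proof.
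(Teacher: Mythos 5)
Your proposal is correct and follows essentially the same route as the paper: both reduce the theorem, via the exponential generating function \eqref{umbra-berpoly} applied to $h(y)=e^{ty}$, to the single characteristic-function identity $\mathbb{E}\left[e^{itL_{B}}\right]=\tfrac{t}{2}\,\text{csch}\left(\tfrac{t}{2}\right)$ for the density \eqref{form-lb}. The only divergence is at the final step, where the paper cites the cosine-transform entry $3.982.1$ of Gradshteyn--Ryzhik while you verify the same Fourier transform by a residue computation at the double poles $x=i\left(n+\tfrac{1}{2}\right)$; your outline of that computation is sound, since the residue there equals $-ite^{-t(n+1/2)}/\pi^{2}$ and the resulting geometric series sums to $1/(2\sinh(t/2))$ as you claim.
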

\begin{proof}
Put $x=0$ in the special case 
\begin{equation}
{\rm{eval}}\left\{\exp( t \mathfrak{B}(x)) \right\} = 
\mathbb{E} \left[ \exp \left( t( x - \tfrac{1}{2} + i L_{B}) \right)  \right]
\label{val-11}
\end{equation}
\noindent
of \eqref{eval-98} and use \eqref{umbra-berpoly} to produce 
\begin{equation}
\mathbb{E} \left[ \exp( it L_{B} )  \right] = 
\frac{t}{2}\text{csch} \left( \frac{t}{2} \right).
\label{eq-1}
\end{equation}
\noindent
Let $f_{L_{B}}(x)$ be the density of $L_{B}$ and write \eqref{eq-1} as 
\begin{equation}
\int_{-\infty}^{\infty} \cos(tu) f_{L_{B}}(u) \, du = 
\frac{t}{2}\text{csch} \left( \frac{t}{2} \right)
\end{equation}
\noindent
assuming the symmetry of $L_{B}$. The result now follows from entry 
$3.982.1$ in \cite{gradshteyn-2007a} 
\begin{equation}
\int_{-\infty}^{\infty} \text{sech}^{2}(au) \, \cos(tu ) \, du = 
\frac{\pi t}{a^{2}} \text{csch} \left( \frac{\pi t}{2a} \right).
\end{equation}
\end{proof}

\begin{note}
The integral representation of the Bernoulli polynomials
\begin{equation}
B_{n}(x) = \mathbb{E}\left[ ( x - \tfrac{1}{2} + i L_{B} )^{n} \right].
\label{umbra-ber}
\end{equation}
\noindent
is a special case of Theorem \ref{density-B}. The 
formula \eqref{umbra-ber} is stated in non-probabilistic language as 
\begin{equation}
B_{n}(x) = \frac{\pi}{2} \int_{-\infty}^{\infty} 
\left( x - \tfrac{1}{2} + it \right)^{n} \text{ sech}^{2}(\pi t)  \, dt.
\label{soliton-1}
\end{equation}
\noindent
To the best of our knowledge, this evaluation first 
appeared in \cite{touchard-1956a}. The 
role played by $\text{sech}^{2}x$ as a solitary wave for the Kortweg-de 
Vries equation has prompted the titles of the evaluations of \eqref{soliton-1}
in \cite{boyadzhiev-2007a,grosset-2005a}.
\end{note}

The next result uses Theorem \ref{density-B} to evaluate a 
nonlinear function of the Bernoulli polynomials that will be needed 
later. More examples will 
appear in the companion paper \cite{dixit-2012b}.

\begin{theorem}
\label{umbra-psi}
Let $\psi(x) = \Gamma'(x)/\Gamma(x)$ be the digamma function. Then
\begin{equation}
{\rm{eval}} \left\{ \log \mathfrak{B}(x) \right\} = 
\psi \left( \tfrac{1}{2} + \left| x - \tfrac{1}{2} \right| \right).
\end{equation}
\noindent
In particular, for $x \geq \tfrac{1}{2}$, 
\begin{equation}
{\rm{eval}} \left\{ \log \mathfrak{B}(x) \right\} = \psi(x).
\end{equation}
\end{theorem}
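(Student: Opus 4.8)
The plan is to feed $h=\log$ into the probabilistic representation supplied by Theorem \ref{density-B}. Writing $a = x - \tfrac{1}{2}$, that theorem gives
\begin{equation*}
{\rm{eval}}\{ \log \B(x) \} = \mathbb{E}\left[ \log(a + i L_{B}) \right] = \frac{\pi}{2}\int_{-\infty}^{\infty} \log(a + iu)\,{\rm sech}^{2}(\pi u)\,du.
\end{equation*}
Because the density $\tfrac{\pi}{2}{\rm sech}^{2}(\pi u)$ is even, replacing $u$ by $-u$ and averaging converts this into $\tfrac{1}{2}\mathbb{E}\left[\log(a^{2} + L_{B}^{2})\right]$, which is manifestly real and even in $a$. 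This observation already accounts for the absolute value in the statement: the quantity depends only on $|a| = \left|x - \tfrac{1}{2}\right|$, so it suffices to treat $a \geq 0$, i.e. $x \geq \tfrac{1}{2}$, and then substitute $\tfrac{1}{2} + \left|x - \tfrac{1}{2}\right|$ for $x$ at the very end.

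For $x > \tfrac{1}{2}$ the argument $a + iL_{B}$ has positive real part, so I would invoke the Frullani representation $\log w = \int_{0}^{\infty}(e^{-t} - e^{-wt})\,t^{-1}\,dt$, valid for $\realpart w > 0$. Substituting and interchanging the expectation with the $t$-integral gives
\begin{equation*}
{\rm{eval}}\{\log\B(x)\} = \int_{0}^{\infty} \frac{e^{-t} - e^{-at}\,\mathbb{E}\left[e^{-itL_{B}}\right]}{t}\,dt.
\end{equation*}
The characteristic function is furnished by \eqref{eq-1}: since the density is even, $\mathbb{E}\left[e^{-itL_{B}}\right] = \mathbb{E}\left[e^{itL_{B}}\right] = \tfrac{t}{2}\,{\rm csch}(t/2)$. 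A one-line simplification shows that $e^{-at}\cdot\tfrac{t}{2}\,{\rm csch}(t/2) = t\,e^{-xt}/(1 - e^{-t})$, whence the integrand collapses to the classical Gauss integral representation of the digamma function,
\begin{equation*}
{\rm{eval}}\{\log\B(x)\} = \int_{0}^{\infty}\left( \frac{e^{-t}}{t} - \frac{e^{-xt}}{1 - e^{-t}}\right)\,dt = \psi(x).
\end{equation*}
Together with the evenness observation from the first paragraph (and continuity at $x=\tfrac{1}{2}$), this delivers $\psi\left(\tfrac{1}{2} + \left|x - \tfrac{1}{2}\right|\right)$ in general.

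The main technical point to secure is the interchange of the $u$-integration (the expectation) with the $t$-integration, together with the behaviour near $t = 0$: the two pieces $e^{-t}/t$ and $e^{-at}\mathbb{E}[e^{-itL_{B}}]/t$ each diverge like $1/t$ there, yet their difference stays integrable because $\mathbb{E}[e^{-itL_{B}}]\to 1$ as $t\to 0$. I would therefore justify the swap by applying dominated convergence to the \emph{combined} integrand rather than splitting it, using the rapid decay of both $\tfrac{t}{2}\,{\rm csch}(t/2)$ and ${\rm sech}^{2}(\pi u)$. A secondary subtlety is that the Frullani step is legitimate only when $\realpart w > 0$, which is precisely why the symmetrization to $\left|x - \tfrac{1}{2}\right|$ must be performed first rather than inserting the representation directly at $x < \tfrac{1}{2}$. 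Everything remaining is routine simplification.
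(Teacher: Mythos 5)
Your proof is correct, and after the shared opening move it follows a genuinely different route from the paper's. Both arguments begin identically: apply Theorem \ref{density-B} with $h = \log$ and use the evenness of $f_{L_{B}}$ to reduce to the symmetrized real quantity $\tfrac{1}{2}\mathbb{E}\left[ \log \left( (x-\tfrac{1}{2})^{2} + L_{B}^{2} \right) \right]$, which depends only on $\left| x - \tfrac{1}{2} \right|$ and so justifies restricting to $x \geq \tfrac{1}{2}$. From there the paper stays entirely real: it evaluates $\mathbb{E}\left[ \log(1 + bL_{B}^{2}) \right]$ by quoting entry $4.373.4$ of \cite{gradshteyn-2007a} for the $\sinh^{-2}$ kernel, converts that to the needed $\cosh^{-2}$ kernel via $\sinh 2\pi u = 2 \sinh \pi u \cosh \pi u$ (whence the combination $h(b,\pi) - 4h(b,2\pi)$), and reassembles the answer into a single digamma value using the duplication formula $\psi(2z) = \tfrac{1}{2}\psi(z) + \tfrac{1}{2}\psi(z+\tfrac{1}{2}) + \log 2$. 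You instead keep the complex form $\mathbb{E}\left[ \log(a + iL_{B}) \right]$, insert the Frullani representation of $\log$ on the right half-plane, and let the expectation act on $e^{-(a+iL_{B})t}$, where the characteristic function \eqref{eq-1} turns $e^{-at}\, \tfrac{t}{2}\, \mathrm{csch}(t/2)$ into exactly $te^{-xt}/(1-e^{-t})$, so that Gauss's integral for the digamma function finishes the computation. What your route buys: it bypasses both the table entry $4.373.4$ and the duplication formula, reuses a fact the paper has already established in proving Theorem \ref{density-B}, and makes it structurally transparent why $\psi$ appears at all --- the integrand of Gauss's formula is literally the Bernoulli generating kernel. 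What the paper's route buys: it avoids complex logarithms entirely (no branch or $\realpart w > 0$ issues) and avoids the Fubini interchange you must justify. Your handling of that interchange is the right one --- dominating the \emph{combined} integrand $(e^{-t} - e^{-wt})/t$, which stays bounded as $t \to 0^{+}$, rather than the two separately divergent pieces --- and your insistence on symmetrizing to $\left| x - \tfrac{1}{2} \right|$ \emph{before} invoking Frullani is precisely the point where a naive version of this argument would fail.
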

\begin{proof}
Theorem \ref{density-B} with $h(x) = \log x$ gives 
\begin{equation}
{\rm{eval}} \left\{ \log \mathfrak{B}(x) \right\} = 
\mathbb{E} \left[ \log \left( x - \tfrac{1}{2} + i L_{B} \right) \right]. 
\end{equation}
\noindent
The density $f_{L_{B}}$ is an even function, therefore the random 
variables $L_{B}$ and $-L_{B}$ 
have the same distribution. This gives 
\begin{eqnarray*}
{\rm{eval}} \left\{ \log \mathfrak{B}(x) \right\} & = & 
\frac{1}{2} \mathbb{E} \left[ 
\log \left( (x - \tfrac{1}{2})^{2}  + L_{B}^{2} \right) \right]
\\
& = &  \log \left( x - \tfrac{1}{2} \right) + 
\frac{1}{2} \mathbb{E} \left[ 
\log \left( 1 + \frac{L_{B}^{2}}{(x - \tfrac{1}{2})^{2}}
\right) \right]. 
\end{eqnarray*}
\noindent
A linear scaling of entry $4.373.4$ in \cite{gradshteyn-2007a} gives 
\begin{equation}
\int_{0}^{\infty} \frac{\log(1 + bu^{2})}{\sinh^{2} c u} \, du =
\frac{2}{c} \left[ \log \frac{c}{\pi \sqrt{b}} - 
\frac{\pi \sqrt{b}}{2c} - 
\psi \left( \frac{c}{\pi \sqrt{b}} \right) \right],
\vm
\end{equation}
\noindent 
for $b, \, c > 0$. Define
\begin{equation}
h(b,c) := 
\frac{2}{c} \left[ \log \frac{c}{\pi \sqrt{b}} - 
\frac{\pi \sqrt{b}}{2c} - 
\psi \left( \frac{c}{\pi \sqrt{b}} \right) \right]
\end{equation}
\noindent
and observe that 
\begin{eqnarray*}
\int_{0}^{\infty} \frac{\log(1 + bu^{2})}{\sinh^{2} 2 \pi u} \, du & = & 
\frac{1}{4} \int_{0}^{\infty} \frac{\log(1 + bu^{2})}{\cosh^{2} \pi u} 
\, \frac{du}{\sinh^{2} \pi u} \\
& = & 
\frac{1}{4} \int_{0}^{\infty} \frac{\log(1 + bu^{2})}{\cosh^{2} \pi u} 
\, \left( \frac{\cosh^{2} \pi u}{\sinh^{2}\pi u} - 1 \right) du.
\end{eqnarray*}
\noindent
It follows that 
\begin{equation}
\int_{0}^{\infty} \frac{\log(1 + bu^{2})}{\cosh^{2} \pi u} \, du = 
h(b, \pi) - 4 h(b, 2 \pi).
\end{equation}
\noindent
Now take $b = (x- \tfrac{1}{2})^{-2}$ to produce 
\begin{eqnarray}
\mathbb{E} \log \left( 1 + bL_{B}^{2} \right) 
& = & 
\frac{\pi}{2} \int_{0}^{\infty} 
\frac{\log \left( 1 + b u^{2}  \right)}{\cosh^{2} \pi u}  \, du 
\label{rep-1} \\
& = & \frac{\pi}{2} \left( h(b,\pi) - 4 h(b, 2 \pi) \right) \nonumber \\
& = & \log \frac{1}{\sqrt{b}} - 2 \log \frac{2}{\sqrt{b}} - 
\psi \left( \frac{1}{\sqrt{b}} \right) + 
2 \psi \left( \frac{2}{\sqrt{b}} \right). \nonumber
\end{eqnarray}

\noindent
The duplication formula
\begin{equation}
\psi(2z) = \tfrac{1}{2}\psi(z) + 
\tfrac{1}{2} \psi \left( z + \tfrac{1}{2} \right)
 + \log 2,
\vm
\end{equation}
\noindent
that appears as entry $8.365.6$ in \cite{gradshteyn-2007a}, reduces 
\eqref{rep-1} to the stated form.
\end{proof}

\section{The periodicity of the modified  Bernoulli numbers $B_{2n+1}^{*}$}
\label{modified-ber} 
\setcounter{equation}{0}

This section uses the umbral method to express the generating function of the 
modified Bernoulli numbers $B_{n}^{*}$ in terms of the digamma function
$\psi(x)$. The 
periodicity of $B_{2n+1}^{*}$ in Theorem \ref{zagier-per6} 
follows from this computation. Zagier \cite{zagier-1998a} establishes 
this result by using the expression 
\begin{equation}
2 \sum_{n=1}^{\infty} B_{n}^{*} x^{n} = 
\sum_{r=1}^{\infty} \frac{B_{r}}{r} \frac{x^{r}}{(1-x)^{2r}} - 
2 \log(1-x).
\end{equation}
\noindent
In the proofs given here the generating function employed admits an explicit 
expression.

\begin{theorem}
\label{gf-bstar}
The generating function of the sequence $\{ B_{n}^{*} \}$ is given by 
\begin{equation}
F_{B^{*}}(z) :=\sum_{n=1}^{\infty} B_{n}^{*}z^{n} = - \frac{1}{2} \log z - 
\frac{1}{2} \psi \left( z + 1/z - 1  \right).
\end{equation}
\end{theorem}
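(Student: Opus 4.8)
The plan is to compute the generating function directly from the definition \eqref{zagier-mod} using the umbral method developed in Section \ref{sec-umbral}, replacing $B_r$ by the umbra $\B$ so that the double structure of the sum collapses into a single closed-form expression. Writing
\begin{equation}
F_{B^{*}}(z) = \sum_{n=1}^{\infty} z^{n} \sum_{r=0}^{n} \binom{n+r}{2r} \frac{B_{r}}{n+r}
= {\rm{eval}} \left\{ \sum_{n=1}^{\infty} z^{n} \sum_{r=0}^{n} \binom{n+r}{2r} \frac{\B^{r}}{n+r} \right\},
\end{equation}
I would first interchange the order of summation and substitute $m = n+r$, so that the inner binomial sums become recognizable. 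The key combinatorial identity to invoke is the generating function for the central-type binomial coefficients $\binom{n+r}{2r}$, which relates to the expansion of $w^{r}/(1-w)^{2r}$ type kernels; concretely I expect the sum over $n$ to produce a factor of the form $(z/(1-z)^{2})^{r}$ after the reindexing, mirroring the classical identity $\sum_{n \geq r} \binom{n+r}{2r} z^{n} = z^{r}/(1-z)^{2r+1}$.

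Once the sum over $n$ is resolved, the remaining task is to evaluate a single sum in $r$ of the shape $\sum_{r} \frac{\B^{r}}{r} u^{r}$ where $u = z/(1-z)^{2}$, together with a leftover logarithmic term coming from the $r=0$ (or low-order) contribution. The crucial step is to recognize $\sum_{r \geq 1} \frac{\B^{r}}{r} u^{r} = -{\rm{eval}}\{ \log(1 - u\B)\}$, so that after the evaluation map the whole expression is a value of the function ${\rm{eval}}\{\log \mathfrak{B}(x)\}$ computed in Theorem \ref{umbra-psi}. This is precisely why that theorem was established: it converts the umbral logarithm into the digamma function $\psi$. The algebraic heart of the proof is checking that the argument of the logarithm, expressed through the substitution $u = z/(1-z)^{2}$, simplifies so that the shift in $\B$ matches the form $\mathfrak{B}(x)$ with $x$ arranged to give the argument $z + 1/z - 1$ appearing in the statement.

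I expect the main obstacle to be the bookkeeping in the combinatorial reduction: getting the index shift $m = n+r$ and the binomial generating function exactly right, including careful treatment of the boundary terms ($r=0$ versus $r \geq 1$) which are responsible for the separate $-\tfrac{1}{2}\log z$ term. A subtlety is that the naive manipulation produces $\log(1-u\B)$, and one must use the constitutive relation \eqref{ber-umbrae} or Lemma \ref{taylor} to rewrite $1 - u\B$ as a constant multiple of a shifted Bernoulli umbra $\mathfrak{B}(x)$ before Theorem \ref{umbra-psi} can be applied; verifying that the constant factor accounts exactly for the $-\tfrac{1}{2}\log z$ and that the shift lands on the correct argument of $\psi$ is the delicate part. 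The factor of $-\tfrac{1}{2}$ in front of both terms should emerge naturally from the $\frac{1}{2}$ in the expectation representation of Theorem \ref{umbra-psi} combined with the $\frac{1}{r}$ weight, but reconciling all the constants is where I would concentrate the verification effort.
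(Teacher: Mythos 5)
Your proposal follows essentially the same route as the paper's proof: interchange the order of summation to reach $F_{B^{*}}(z) = -\log(1-z) + \sum_{r\geq 1}\frac{B_{r}}{2r}\frac{z^{r}}{(1-z)^{2r}}$, umbralize that sum as $-\tfrac{1}{2}\,{\rm{eval}}\left\{\log\left((1-z)^{2}-z\mathfrak{B}\right)\right\}$, and use $-\mathfrak{B}=\mathfrak{B}+1$ together with Lemma \ref{cool-sum} and Theorem \ref{umbra-psi} to land on $\psi(z+1/z-1)$, with the $-\tfrac{1}{2}\log z$ arising from factoring $z$ out of the logarithm's argument (not from the $r=0$ term, which contributes $-\log(1-z)$ --- a bookkeeping point you flagged yourself). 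The only correction needed is that the weight satisfies $\binom{n+r}{2r}\frac{1}{n+r}=\binom{n+r-1}{2r-1}\frac{1}{2r}$, so the relevant inner sum is $\sum_{n\geq r}\binom{n+r-1}{2r-1}z^{n}=z^{r}/(1-z)^{2r}$, a sibling of the identity you quoted; this is where the $\tfrac{1}{2r}$, and hence the overall factor $\tfrac{1}{2}$, actually originates.
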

\begin{proof}
Start with
\begin{eqnarray*}
F_{B^{*}}(z) & = & 
\sum_{n=1}^{\infty} z^{n}\sum_{r=0}^{\infty} \binom{n+r}{2r}\frac{B_{r}}{n+r} 
\\
& = & \sum_{n=1}^{\infty} z^{n}\sum_{r=1}^{\infty} 
\binom{n+r}{2r}\frac{B_{r}}{n+r}+
\sum_{n=1}^{\infty} z^{n}\binom{n}{0}\frac{B_{0}}{n}.
\end{eqnarray*}
The second term is
$-\log(1-z)$. Interchanging the order of summation in the first term gives 
\begin{equation*}
\sum_{n=1}^{\infty} z^{n}\sum_{r=1}^{\infty}
\binom{n+r}{2r}\frac{B_{r}}{n+r} = 
\sum_{r=1}^{\infty}\frac{B_{r}}{(2r)!}
\sum_{n=r}^{\infty}z^{n}\frac{(n+r-1)!}{(n-r)!}
\end{equation*}
and the inner sum is identified as
\begin{equation*}
\sum_{n=r}^{\infty} z^{n}\frac{\left(n+r-1\right)!}{\left(n-r\right)!}
=\sum_{m=0}^{\infty}z^{m+r}\frac{(2r+m-1)!)}{m!}
=\left(2r-1\right)!\frac{z^{r}}{\left(1-z\right)^{2r}}.
\end{equation*}
Therefore 
\begin{equation}
\label{fb-1}
F_{B^{*}}(z) = - \log(1-z) + 
\sum_{r=1}^{\infty}\frac{B_{r}}{2r}\frac{z^{r}}{\left(1-z\right)^{2r}}.
\end{equation}
\noindent
The rules of umbral calculus now give an 
expression for $F_{B^{*}}(z)$. The identity
\begin{equation*}
\sum_{r=1}^{\infty}\frac{B_{r}}{2r}\frac{z^{r}}{\left(1-z\right)^{2r}}
=- {\rm{eval}} \left\{ \frac{1}{2}\log\left(1-\frac{ z \mathfrak{B} }
{\left(1-z\right)^{2}}\right) \right\},
\end{equation*}
gives 
\begin{equation}
F_{B^{*}}(z) = -{\rm{eval}} \left\{ \frac{1}{2}\log\left(\left(1-z\right)^{2}-
z\mathfrak{B} \right)
\label{fb-2}
\right\}.
\end{equation}
\noindent
Further reduction yields 
\begin{eqnarray*}
\log\left(\left(1-z\right)^{2}-\B z\right) & = & 
\log z+\log\left(\frac{\left(1-z\right)^{2}}{z}-\B \right) \\
& = & \log z+\log\left(\frac{\left(1-z\right)^{2}}{z}+\B+1\right)\\
 & = & \log z+\log\left[\B\left(1+\frac{\left(1-z\right)^{2}}{z}\right)\right],
\end{eqnarray*}
\noindent
using \eqref{umbra-rel1}. The result now follows from Theorem \ref{umbra-psi}.
\end{proof}

The generating function of $B_{2n+1}^{*}$ is now obtained from Theorem 
\ref{gf-bstar}.  The 
proof presented here is similar to the one given in \cite{gessel-2003a}.

\begin{theorem}
\label{gf-odd11}
The generating function of the sequence of odd-order modified 
Bernoulli numbers is given by 
\begin{equation}
G_{B^{*}}(z) = \sum_{n=0}^{\infty} B_{2n+1}^{*}z^{2n+1} = 
\frac{3z^{11} - z^{9}-z^{7} + z^{5} + z^{3} - 3z}{4(z^{12}-1)}.
\end{equation}
\end{theorem}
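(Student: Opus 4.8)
The plan is to start from the closed form for the full generating function established in Theorem~\ref{gf-bstar},
\begin{equation*}
F_{B^{*}}(z) = -\tfrac{1}{2}\log z - \tfrac{1}{2}\psi\!\left(z + 1/z - 1\right),
\end{equation*}
and extract its odd part. Since $G_{B^{*}}(z) = \tfrac{1}{2}\bigl(F_{B^{*}}(z) - F_{B^{*}}(-z)\bigr)$, the first step is to compute $F_{B^{*}}(-z)$, which requires the value of $\psi$ at the argument $-z - 1/z - 1$. The key observation is that the two digamma arguments, namely $z + 1/z - 1$ and $-z - 1/z - 1$, are related by the reflection $w \mapsto -w - 2$: writing $w = z + 1/z - 1$ one checks $-z - 1/z - 1 = -w - 2 = -(w+1) - 1$. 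This suggests invoking the reflection formula $\psi(1-u) - \psi(u) = \pi\cot(\pi u)$ to convert the difference $\psi(z+1/z-1) - \psi(-z-1/z-1)$ into a cotangent, so that the transcendental $\psi$-terms collapse into an elementary function of $z$.

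The second step is to carry out this reduction carefully. After taking the odd part, the $\log z$ contributions combine into $-\tfrac{1}{2}\log z + \tfrac{1}{2}\log(-z)$, an elementary term, and the digamma difference becomes (up to the reflection bookkeeping) a multiple of $\pi\cot\bigl(\pi(z + 1/z)\bigr)$ or a closely related cotangent. The substitution $z + 1/z$ is the crucial simplifier: I expect the cotangent argument to be a linear function of $z + 1/z$, and the rational structure of the claimed answer signals that the cotangent ultimately rationalizes. Concretely, $\cot(\pi(z+1/z))$ should be expressible through the substitution back into $z$ as a rational function because the relevant angle is, modulo the reflection identities, a half-integer shift that turns the trigonometric term into a ratio of polynomials in $z$ via the roots of unity encoded in $z^{12}-1$.

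The third step is to verify that the resulting rational function is exactly $\dfrac{3z^{11} - z^{9} - z^{7} + z^{5} + z^{3} - 3z}{4(z^{12}-1)}$. Here I would match the denominator $z^{12}-1$ with the period-$6$ structure of Theorem~\ref{zagier-per6}: a period-$6$ sequence $\{B_{2n+1}^{*}\}$ forces the odd generating function to be a rational function whose poles are among the twelfth roots of unity, and the numerator is then pinned down by the known six values $\{\tfrac{3}{4}, -\tfrac{1}{4}, -\tfrac{1}{4}, \tfrac{1}{4}, \tfrac{1}{4}, -\tfrac{3}{4}\}$. As a cross-check one can sum the geometric-type series $\sum_{n\ge 0} B_{2n+1}^{*}z^{2n+1}$ directly using these six repeating values, obtaining a rational function with denominator $1 - z^{12}$ and a numerator polynomial whose coefficients are the partial sums of the period; this should reproduce the stated formula and confirm the sign pattern of the odd powers of $z$ in the numerator.

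The main obstacle I anticipate is the reflection-formula bookkeeping in the second step: the digamma reflection identity introduces a $\pi\cot(\pi u)$ term whose argument $u = z + 1/z - 1$ is not an elementary angle, and converting $\pi\cot\bigl(\pi(z+1/z)\bigr)$ into a genuine rational function of $z$ is the delicate point. This hinges on recognizing that the combination forced by the odd-part extraction evaluates the cotangent at arguments whose tangent half-angle substitution, together with $z^{12}-1$ supplying the relevant roots of unity, produces cancellation of all transcendental pieces. Getting the constant factor $\tfrac{1}{4}$ and the precise numerator signs correct will require tracking the $\log(-z) = \log z + i\pi$ branch contribution and ensuring it merges cleanly with the cotangent term rather than leaving a spurious imaginary or constant remainder.
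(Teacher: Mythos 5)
Your step 2 contains a genuine gap, and it is fatal to the proposed route. You obtain $F_{B^{*}}(-z)$ by formally substituting $-z$ into the closed form of Theorem~\ref{gf-bstar}, arriving at $\psi(-z-1/z-1)$, and then plan to compare it with $\psi(z+1/z-1)$ via the reflection formula. But the closed form is only valid where the derivation holds (for small $z>0$ the argument $z+1/z-1$ is large and positive, safely away from the poles of $\psi$ at $0,-1,-2,\dots$); for $-z$ the naive argument $-z-1/z-1$ runs through that pole set, and the substitution is illegitimate. One can see the failure concretely: the reflection formula gives $\psi(-w-2)=\psi(w+3)-\pi\cot(\pi w)$ with $w=z+1/z-1$, so your route leaves behind the term $\pi\cot\bigl(\pi(z+1/z)\bigr)$ plus the spurious constant $i\pi/4$ from $\log(-z)$. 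Contrary to your hope in the final paragraph, $\cot\bigl(\pi(z+1/z)\bigr)$ does \emph{not} rationalize: it has infinitely many poles in any annulus around the origin (at every $z$ with $z+1/z\in\mathbb{Z}$), so it cannot equal a rational function, and no bookkeeping with twelfth roots of unity will cancel it. The paper avoids this entirely: it recomputes $F_{B^{*}}(-z)$ from the umbral representation \eqref{fb-2}, where the constitutive relation $-\mathfrak{B}=\mathfrak{B}+1$ turns $(1+z)^{2}+z\mathfrak{B}$ into an evaluation of $\psi$ at the \emph{positive} argument $z+1/z+2$. The two digamma arguments then differ by the integer $3$, and the elementary recurrence $\psi(t+3)-\psi(t)=\tfrac{1}{t}+\tfrac{1}{t+1}+\tfrac{1}{t+2}$ — not the reflection formula — collapses the odd part to
\begin{equation*}
G_{B^{*}}(z)=\frac{1}{4}\left(\frac{1}{w-1}+\frac{1}{w}+\frac{1}{w+1}\right),
\qquad w=z+1/z,
\end{equation*}
which is manifestly rational and reduces by algebra to the stated formula, since $z^{2}\pm z+1$ and $z^{2}+1$ all divide $z^{12}-1$.

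Your step 3 cross-check (summing the series from the six repeating values $\{\tfrac34,-\tfrac14,-\tfrac14,\tfrac14,\tfrac14,-\tfrac34\}$) is arithmetically sound, but as a proof it presupposes Theorem~\ref{zagier-per6}, which in this paper is \emph{derived} from Theorem~\ref{gf-odd11} (Corollary~\ref{period-6}); it would only stand by importing Zagier's original proof of periodicity, which abandons the generating-function derivation you set out to give and inverts the paper's logical flow.
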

\begin{proof}
Start with
\begin{equation}
\frac{F_{B^{*}}(z)-F_{B^{*}}(-z)}{2} = \sum_{n=0}^{\infty} B_{2n+1}^{*}z^{2n+1}.
\end{equation}
\noindent
To evaluate $F_{B^{*}}(-z)$ use the relation \eqref{fb-2} and the operational 
rule from Lemma \ref{cool-sum} to obtain
\begin{eqnarray*}
2F_{B^{*}}\left(-z\right) & = & 
-{\rm{eval}} \left\{ \log\left(\left(1+z\right)^{2}+z \B\right) \right\}\\
& = & -\log z-{\rm{eval}} \left\{ 
\log\left(\frac{\left(1+z\right)^{2}}{z}+ \B\right) \right\} \\
 & = & -\log z-{\rm{eval}} \left\{ 
\log\left[\B\left(\frac{\left(1+z\right)^{2}}{z}\right)\right] \right\} \\
& = & -\log z-\psi\left(\frac{\left(1+z\right)^{2}}{z}\right).
\end{eqnarray*}
Therefore 
\begin{eqnarray*}
G_{B^{*}}\left(z\right) & = & 
\frac{F_{B^{*}}\left(z\right)-F_{B^{*}}\left(-z\right)}{2} \\
& = & -\frac{1}{4}\left(\psi\left(1+\frac{\left(1-z\right)^{2}}{z}\right)-
\psi\left(\frac{\left(1+z\right)^{2}}{z}\right)\right)\\
 & = & \frac{1}{4}\psi\left(z+ 1/z+2 \right) -
\frac{1}{4}\psi\left(z + 1/z -1 \right).
\end{eqnarray*}
\noindent
Now use the relation (entry $8.365$ in \cite{gradshteyn-2007a})
\begin{equation}
\psi(z + m) = \psi(z) + \sum_{k=0}^{m-1} \frac{1}{z+k}
\end{equation}
to obtain the result.
\end{proof}

Theorem \ref{zagier-per6} is now obtained as a consequence of Theorem 
\ref{gf-odd11}.

\begin{corollary}
\label{period-6}
The sequence of odd-order modified Bernoulli 
numbers $B_{2n+1}^{*}$ is periodic of period $6$.
\end{corollary}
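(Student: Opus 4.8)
The plan is to extract the periodicity directly from the closed-form generating function $G_{B^{*}}(z)$ furnished by Theorem \ref{gf-odd11}. The only feature that matters is that the denominator is $4(z^{12}-1)$ while the numerator $3z^{11}-z^{9}-z^{7}+z^{5}+z^{3}-3z$ has degree $11$, strictly smaller than $12$. Because of this, no polynomial part splits off and the whole function is captured by a geometric expansion of $1/(z^{12}-1)$.

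First I would write $\tfrac{1}{z^{12}-1} = -\sum_{k=0}^{\infty} z^{12k}$, valid for $\abs{z}<1$, and substitute into Theorem \ref{gf-odd11} to obtain
\[
G_{B^{*}}(z) = \tfrac{1}{4}\bigl(3z - z^{3} - z^{5} + z^{7} + z^{9} - 3z^{11}\bigr)\sum_{k=0}^{\infty} z^{12k}.
\]
The six exponents $1,3,5,7,9,11$ in the first factor are precisely the odd residues modulo $12$, and they are pairwise distinct modulo $12$. Hence multiplication by $\sum_{k} z^{12k}$ distributes each monomial into its own arithmetic progression of exponents, with no two progressions overlapping. Reading off the coefficient of $z^{2n+1}$ then shows that $B_{2n+1}^{*}$ depends only on the residue of $2n+1$ modulo $12$, equivalently only on $n \bmod 6$. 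This is exactly the asserted $6$-periodicity, $B_{2n+1}^{*} = B_{2(n+6)+1}^{*}$ for every $n \geq 0$. As a bonus, the same expansion lists one full period of values, namely $B_{1}^{*},\dots,B_{11}^{*} = \tfrac{3}{4},-\tfrac{1}{4},-\tfrac{1}{4},\tfrac{1}{4},\tfrac{1}{4},-\tfrac{3}{4}$, recovering Theorem \ref{zagier-per6}.

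I do not expect a genuine obstacle; the corollary is a direct unwinding of the generating function. The one point requiring a moment's care is the claim that the period is exactly $6$ rather than a proper divisor: this holds because the six values just listed are not invariant under any shift by $1$, $2$, or $3$, so no shorter period is possible.

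An equivalent and slightly slicker route, avoiding the explicit geometric series, is to clear denominators in Theorem \ref{gf-odd11} to obtain the functional equation $4(z^{12}-1)G_{B^{*}}(z) = 3z^{11}-z^{9}-z^{7}+z^{5}+z^{3}-3z$, and then compare coefficients of $z^{m}$ for odd $m \geq 13$. Since the right-hand side contributes nothing in this range, one gets $B_{m}^{*} = B_{m-12}^{*}$, that is $B_{2n+1}^{*}=B_{2(n-6)+1}^{*}$ for $n \geq 6$, with the initial segment $n=0,\dots,5$ furnishing the first period. Either way the periodicity follows with no computation beyond the expansion already recorded.
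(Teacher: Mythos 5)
Your proposal is correct and is essentially the paper's own argument: the paper states Corollary \ref{period-6} as an immediate consequence of the rational generating function in Theorem \ref{gf-odd11}, leaving implicit exactly the unwinding you spell out (numerator degree $11 < 12$, denominator $4(z^{12}-1)$, hence coefficients constant on residue classes of exponents modulo $12$). Your geometric-series expansion, the coefficient comparison via $4(z^{12}-1)G_{B^{*}}(z) = 3z^{11}-z^{9}-z^{7}+z^{5}+z^{3}-3z$, and the check that no shift by $1$, $2$, or $3$ preserves the six values (so the period is exactly $6$) are all correct and match the listed values $\tfrac{3}{4}, -\tfrac{1}{4}, -\tfrac{1}{4}, \tfrac{1}{4}, \tfrac{1}{4}, -\tfrac{3}{4}$.
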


\section{An inversion formula for the modified  Bernoulli numbers}
\label{inv-modber1} 
\setcounter{equation}{0}

This section discusses an expression for the classical Bernoulli numbers $B_{n}$
in terms of the modified ones $B_{n}^{*}$. The result appears already in 
\cite{zagier-1998a}, but the proof presented here extends directly to the 
polynomial case as stated in Theorem \ref{inv-poly}. The 
details are simplified by introducing a minor 
adjustment of $B_{n}^{*}$.

\begin{lemma}
Define $\overline{B}_{n} = B_{n}^{*} - 1/n$. Then 
\begin{equation}
\overline{B}_{n} = \sum_{k=1}^{n} \binom{n+k-1}{n-k} \frac{B_{k}}{2k}.
\vm
\end{equation}
\end{lemma}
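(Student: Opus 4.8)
The plan is to work directly from the definition \eqref{zagier-mod} of $B_{n}^{*}$ and reduce the claim to a single elementary binomial identity applied term by term. First I would isolate the $r=0$ contribution in the sum defining $B_{n}^{*}$. Since $B_{0}=1$ and $\binom{n}{0}=1$, that term equals $\tfrac{1}{n}$, which is precisely the quantity subtracted off in the definition of $\overline{B}_{n}$. Hence
\begin{equation*}
\overline{B}_{n} = B_{n}^{*} - \frac{1}{n} = \sum_{r=1}^{n} \binom{n+r}{2r} \frac{B_{r}}{n+r}.
\end{equation*}
It then suffices to match this surviving sum against the target expression (writing $k$ for the index $r$) summand by summand.

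Concretely, for each $1 \le r \le n$ I would establish the coefficient identity
\begin{equation*}
\frac{1}{n+r} \binom{n+r}{2r} = \frac{1}{2r} \binom{n+r-1}{n-r},
\end{equation*}
after which the two sums agree term by term and the lemma follows by linearity of the summation. The verification of this identity is the only computational step, and it is routine rather than a genuine obstacle: expanding into factorials, the left-hand side is $\tfrac{(n+r-1)!}{(2r)!\,(n-r)!}$, while on the right the lower entries of $\binom{n+r-1}{n-r}$ differ by $(n+r-1)-(n-r)=2r-1$, so $\binom{n+r-1}{n-r}=\tfrac{(n+r-1)!}{(n-r)!\,(2r-1)!}$, and dividing by $2r$ yields the same $\tfrac{(n+r-1)!}{(n-r)!\,(2r)!}$. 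The two expressions coincide, which closes the argument.

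I would add one forward-looking remark: because this reduction uses no arithmetic property of the Bernoulli numbers beyond $B_{0}=1$, the identical term-by-term matching should carry over verbatim when each $B_{r}$ is replaced by the Bernoulli polynomial $B_{r}(x)$. That is presumably the mechanism behind the polynomial inversion formula stated in Theorem \ref{inv-poly}, and it is the reason the present lemma is phrased so as to make the extension transparent.
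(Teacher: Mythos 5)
Your proof is correct and follows essentially the same route as the paper: isolating the $r=0$ term to account for the $1/n$, then verifying the factorial identity $\binom{n+r}{2r}\frac{1}{n+r} = \frac{(n+r-1)!}{(2r-1)!\,(n-r)!}\,\frac{1}{2r} = \frac{1}{2r}\binom{n+r-1}{n-r}$ term by term. Your closing remark about the argument extending verbatim to the polynomial case is also consistent with the paper, which makes exactly this point when stating Theorem \ref{inv-poly}.
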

\begin{proof}
The definition of $B_{n}^{*}$ in \eqref{zagier-mod} produces 
\begin{equation*}
\overline{B}_{n} = \sum_{k=1}^{n} \binom{n+k}{2k} \frac{B_{k}}{n+k}.
\vm
\end{equation*}
\noindent
Then use 
\begin{equation*}
\binom{n+k}{2k} \frac{1}{n+k} = 
\frac{(n+k-1)!}{(2k-1)! (n-k)!} \frac{1}{2k}
\vm
\end{equation*}
\noindent 
to deduce the claim.
\end{proof}

The inversion result is stated next.

\begin{theorem}
\label{inv-modber}
The sequence of Bernoulli numbers $B_{n}$ are given in terms of the modified 
Bernoulli numbers $B_{n}^{*}$ by
\begin{equation*}
B_{n} = 2n \sum_{k=1}^{n} (-1)^{n+k} 
\left[ \binom{2n-1}{n-k} - \binom{2n-1}{n-k-1} \right] B_{k}^{*} +
2 (-1)^{n} \binom{2n-1}{n}, 
\vm
\end{equation*}
\noindent
for $n \geq 1$. 
\end{theorem}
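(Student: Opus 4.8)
The plan is to invert the lower-triangular linear system produced by the preceding lemma,
\[
\overline{B}_n = \sum_{k=1}^n \binom{n+k-1}{n-k}\frac{B_k}{2k}, \qquad \overline{B}_n = B_n^* - \tfrac1n,
\]
and to substitute $\overline{B}_k = B_k^* - 1/k$ only at the very end. Since the diagonal entry is $\binom{2n-1}{0}=1$, the system is invertible, and the inverse matrix depends only on the combinatorial coefficients, not on the numerical values of the $B_k$. This is precisely why the argument carries over verbatim to the polynomial setting of Theorem~\ref{inv-poly}.

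First I would pass to generating functions. Equation~\eqref{fb-1} in the proof of Theorem~\ref{gf-bstar} already records, after subtracting the $B_0$ contribution $-\log(1-z)=\sum 1/n\,z^n$, the formal power series identity
\[
\sum_{n=1}^\infty \overline{B}_n z^n = \sum_{r=1}^\infty \frac{B_r}{2r}\frac{z^r}{(1-z)^{2r}} = \sum_{r=1}^\infty \frac{B_r}{2r}w^r, \qquad w=\frac{z}{(1-z)^2}.
\]
The substitution $w=z/(1-z)^2$ is the key structural observation: the relation $z=w(1-z)^2=w\,\phi(z)$ with $\phi(z)=(1-z)^2$, $\phi(0)\neq0$, is exactly of Lagrange type, and $w(z)=z+O(z^2)$ admits a compositional inverse $z(w)=w+O(w^2)$.

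Next I would apply the Lagrange--B\"urmann formula. Writing $h(w)=\sum_r \frac{B_r}{2r}w^r=f(z(w))$ with $f(z)=\sum_n \overline{B}_n z^n$ gives
\[
\frac{B_r}{2r}=[w^r]f(z(w))=\sum_{n=1}^r \overline{B}_n\,[w^r]z(w)^n,
\]
and Lagrange--B\"urmann yields $[w^r]z^n=\frac{n}{r}[z^{r-n}]\phi(z)^r=\frac{n}{r}(-1)^{r-n}\binom{2r}{r-n}$. Multiplying by $2r$ and relabelling the indices ($r\to n$, $n\to k$) produces the value-free inversion
\[
B_n=\sum_{k=1}^n 2k(-1)^{n-k}\binom{2n}{n-k}\,\overline{B}_k.
\]

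Finally I would substitute $\overline{B}_k=B_k^*-1/k$ and split into two sums, using $(-1)^{n-k}=(-1)^{n+k}$. The $B_k^*$-sum is reshaped by the one-line factorial identity $k\binom{2n}{n-k}=n\bigl[\binom{2n-1}{n-k}-\binom{2n-1}{n-k-1}\bigr]$, turning its coefficient into $2n(-1)^{n+k}\bigl[\binom{2n-1}{n-k}-\binom{2n-1}{n-k-1}\bigr]$. The remaining constant is the half-row alternating sum $\sum_{k=1}^n 2(-1)^{n-k}\binom{2n}{n-k}=2\sum_{j=0}^{n-1}(-1)^j\binom{2n}{j}=2(-1)^{n-1}\binom{2n-1}{n-1}$, via the telescoping identity $\sum_{j=0}^m(-1)^j\binom{N}{j}=(-1)^m\binom{N-1}{m}$; since $\binom{2n-1}{n-1}=\binom{2n-1}{n}$ this contributes exactly $2(-1)^n\binom{2n-1}{n}$, matching the stated constant term. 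I expect the only genuinely delicate points to be the justification of the compositional inverse and the formal-series manipulation underlying Lagrange--B\"urmann; the two binomial reductions are routine. An alternative that avoids generating functions would verify the inverse pair directly, reducing the claim to the orthogonality identity $\sum_m \frac{m}{k}(-1)^{k-m}\binom{2k}{k-m}\binom{n+m-1}{n-m}=\delta_{n,k}$, but establishing that identity (for instance by the WZ-method) would then be the main obstacle.
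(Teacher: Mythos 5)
Your proof is correct, and it takes a genuinely different route from the paper's. The paper quotes the inversion pair $a_{n} = \sum_{k} \binom{n+p+k}{n-k}b_{k} \Leftrightarrow b_{n} = \sum_{k}(-1)^{n+k}\bigl[\binom{2n+p}{n-k}-\binom{2n+p}{n-k-1}\bigr]a_{k}$ from Riordan's tables (with $p=-1$) and then disposes of the $1/k$ term by proving the auxiliary identity \eqref{nice-sum}; you instead \emph{derive} the inversion from scratch by recognizing that the lemma, restated at the level of generating functions via \eqref{fb-1}, is a composition $f(z)=h(w(z))$ with $w=z/(1-z)^{2}$ of Lagrange type, and applying Lagrange--B\"urmann with $\phi(z)=(1-z)^{2}$ to get $[w^{r}]z(w)^{n}=\tfrac{n}{r}(-1)^{r-n}\binom{2r}{r-n}$. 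I have checked the computation: your value-free inversion $B_{n}=\sum_{k=1}^{n}2k(-1)^{n+k}\binom{2n}{n-k}\overline{B}_{k}$ agrees with the paper's intermediate formula through the identity $k\binom{2n}{n-k}=n\bigl[\binom{2n-1}{n-k}-\binom{2n-1}{n-k-1}\bigr]$ (which is also exactly the paper's rewriting of the summand in \eqref{nice-sum}), and your telescoping evaluation $\sum_{j=0}^{n-1}(-1)^{j}\binom{2n}{j}=(-1)^{n-1}\binom{2n-1}{n-1}$ handles the constant term correctly, where the paper instead folds the half-row sum against the full vanishing alternating row sum. What your route buys is self-containedness: nothing is cited beyond the standard Lagrange inversion formula, and in this special case you have effectively reproved the Riordan pair rather than invoked it; moreover your observation that the inverse matrix is independent of the numerical values $B_{k}$ gives an honest justification of the paper's remark that the proof of Theorem~\ref{inv-poly} is identical. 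What the paper's route buys is brevity, since the inversion is offloaded to a reference and only the binomial sum \eqref{nice-sum} needs proof. The formal-series concerns you flag are harmless: since $w(z)=z+O(z^{2})$, the compositional inverse exists formally and each coefficient extraction involves only finitely many terms, so no convergence of $\sum B_{r}z^{r}$ is needed.
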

\begin{proof}
The inversion formulas
\begin{equation*}
a_{n} = \sum_{k=0}^{n} \binom{n+p+k}{n-k}b_{k}, \text{ and }
b_{n} = \sum_{k=0}^{n} (-1)^{k+n} \left[ \binom{2n+p}{n-k} - 
\binom{2n+p}{n-k-1} \right]a_{k},
\end{equation*}
\noindent
are given in \cite[(23), p. 67]{riordan-1968a}. Applying it to the sequence 
$\overline{B}_{n}$ gives 
\begin{equation}
\frac{B_{n}}{2n} = \sum_{k=1}^{n} (-1)^{n+k} 
\left[ \binom{2n-1}{n-k} - \binom{2n-1}{n-k-1} \right]\overline{B}_{k}.
\end{equation}
\noindent
The result now follows from
\begin{equation}
\sum_{k=1}^{n} (-1)^{n+k} 
\left[ \binom{2n-1}{n-k} - \binom{2n-1}{n-k-1} \right] \frac{1}{k} = 
\frac{(-1)^{n}}{n} \binom{2n-1}{n}.
\label{nice-sum}
\end{equation}
\noindent
To prove the identity \eqref{nice-sum} write the summand as 
\begin{equation}
\frac{1}{k} \left[ \binom{2n-1}{n-k} - \binom{2n-1}{n-k-1} \right]  = 
\frac{1}{n} \binom{2n}{n-k}
\end{equation}
\noindent 
and convert \eqref{nice-sum} to 
\begin{equation}
\sum_{k=1}^{n} (-1)^{k} \binom{2n}{n-k} = -\binom{2n-1}{n}.  
\vm
\end{equation}
\noindent
This follows directly from the basic sum
\begin{equation}
\sum_{k=0}^{2n} (-1)^{j} \binom{2n}{j} = 0.
\end{equation}
\end{proof}

\section{A generating function for Zagier polynomials}
\label{gf-zagierpoly} 
\setcounter{equation}{0}

This section gives the generating function of the Zagier polynomials 
\begin{equation}
B_{n}^{*}(x) = \sum_{r=0}^{n} \binom{n+r}{2r} \frac{B_{r}(x)}{n+r}.
\label{poly-ber-st}
\end{equation}
\noindent
The proof is similar to
Theorem \ref{gf-bstar}, so just an outline of the proof is presented.

\begin{theorem}
\label{gf-bstar-poly}
The generating function of the sequence $\{ B_{n}^{*}(x) \}$ is given by 
\begin{equation}
F_{B^{*}}(x;z) =\sum_{n=1}^{\infty} B_{n}^{*}(x)z^{n} = - \frac{1}{2} \log z - 
\frac{1}{2} \psi \left( z + 1/z - 1 - x \right).
\end{equation}
\end{theorem}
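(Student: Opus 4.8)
The plan is to follow the proof of Theorem \ref{gf-bstar} line by line, carrying the parameter $x$ through each manipulation; the only genuinely new feature is the shift produced by the polynomial argument. First I would split off the $r=0$ term from the double sum defining $F_{B^{*}}(x;z)$. Because $B_{0}(x)=1$ identically, this term is again $\sum_{n\geq 1} z^{n}/n = -\log(1-z)$, independent of $x$. The remaining inner sum over $n$ depends only on the factor $\binom{n+r}{2r}/(n+r)$ and not on $x$, so the identification
\begin{equation*}
\sum_{n=r}^{\infty} z^{n}\frac{(n+r-1)!}{(n-r)!} = (2r-1)!\,\frac{z^{r}}{(1-z)^{2r}}
\end{equation*}
carries over unchanged. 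This delivers the polynomial analogue of \eqref{fb-1},
\begin{equation*}
F_{B^{*}}(x;z) = -\log(1-z) + \sum_{r=1}^{\infty} \frac{B_{r}(x)}{2r}\,\frac{z^{r}}{(1-z)^{2r}}.
\end{equation*}

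Next I would pass to the umbral form, now using the Bernoulli-polynomial umbra $\mathfrak{B}(x)$ in place of $\mathfrak{B}$, to obtain the analogue of \eqref{fb-2},
\begin{equation*}
F_{B^{*}}(x;z) = -\,{\rm{eval}}\left\{\tfrac{1}{2}\log\left((1-z)^{2}-z\,\mathfrak{B}(x)\right)\right\}.
\end{equation*}
The heart of the argument is the reduction of this umbral logarithm. Under ${\rm{eval}}$ I would first use Lemma \ref{cool-sum} to replace $\mathfrak{B}(x)$ by $x+\mathfrak{B}$ and factor out $\log z$, then invoke the constitutive equation \eqref{ber-umbrae} to flip the sign of the umbra, and finally recombine the constant term. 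Concretely, this is the chain
\begin{equation*}
\log\left((1-z)^{2}-z\,\mathfrak{B}(x)\right) = \log z + \log\left(\tfrac{(1-z)^{2}}{z}-x-\mathfrak{B}\right) = \log z + \log\left(\mathfrak{B}+z+1/z-1-x\right),
\end{equation*}
where the second equality uses $-\mathfrak{B}=\mathfrak{B}+1$ together with the elementary identity $1+(1-z)^{2}/z = z+1/z-1$. By \eqref{umbra-rel1} the final bracket is $\mathfrak{B}(z+1/z-1-x)$, so Theorem \ref{umbra-psi} evaluates the logarithm as $\log z + \psi(z+1/z-1-x)$; dividing by $-2$ yields the stated formula.

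The one place requiring care, and the step I expect to be the main obstacle, is the range of validity of Theorem \ref{umbra-psi}, which gives ${\rm{eval}}\{\log\mathfrak{B}(y)\}=\psi(y)$ only for $y\geq\tfrac{1}{2}$; for other $y$ one picks up $\psi(\tfrac{1}{2}+|y-\tfrac{1}{2}|)$ instead. For fixed $x$ the quantity $z+1/z-1-x$ exceeds $\tfrac{1}{2}$ on a punctured neighbourhood of $z=0$, the regime where the generating series converges, so the identity holds there directly. Since the left-hand side is a formal power series in $z$ with coefficients polynomial in $x$ and the right-hand side is analytic in both variables away from the poles of $\psi$, the equality then propagates by analytic continuation. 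Apart from this bookkeeping of the shifted argument, no step differs substantively from the numerical case of Theorem \ref{gf-bstar}.
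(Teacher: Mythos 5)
Your proposal is correct and follows essentially the same route as the paper: the paper likewise derives the polynomial analogue of \eqref{fb-2} by repeating the computation of Theorem \ref{gf-bstar} verbatim, then under ${\rm{eval}}$ replaces $\mathfrak{B}(x)$ by $x+\mathfrak{B}$ via Lemma \ref{cool-sum}, factors out $\log z$, applies $-\mathfrak{B}=\mathfrak{B}+1$ to reach $\log\left(z+1/z-1-x+\mathfrak{B}\right)$, and concludes with Theorem \ref{umbra-psi}. Your closing paragraph on the restriction $y\geq\tfrac{1}{2}$ in Theorem \ref{umbra-psi} is extra diligence the paper omits entirely (it invokes that theorem without comment), and is harmless here since the whole computation is formal in $z$.
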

\begin{proof}
The starting point is the polynomial variation of \eqref{fb-2} in the form
\begin{eqnarray*}
F_{B^{*}}(x;z) & = &  - {\rm{eval}} 
\left\{ \frac{1}{2} \log \left( (1-z)^{2} - z \B(x) 
\right) \right\} \\
 & = &  - {\rm{eval}} \left\{ 
\frac{1}{2} \log \left( (1-z)^{2} - zx - z \B \right) \right\}
\\
 & = &  - {\rm{eval}} \left\{ 
\frac{1}{2} \log \left( 1 - 2z + z^{2} -zx - z \B 
\right)  \right\} \\
 & = &  - \frac{1}{2} \log z - 
{\rm{eval}} \left\{ \frac{1}{2} \log \left( 1/z - 2 + z -x -  \B 
\right) \right\}.
\end{eqnarray*}
\noindent
Now use $- \B = \B + 1$ to obtain
\begin{equation}
F_{B^{*}}(x;z)  =  - \frac{1}{2} \log z - \frac{1}{2}
{\rm{eval}} \left\{ \log \left( 1/z +  z  - 1 - x + \B  \right) 
\right\}.
\end{equation}
\noindent
The final claim now follows from Theorem \ref{umbra-psi}.
\end{proof}

\begin{corollary}
\label{gf-bstaralt-poly}
The generating function of the sequence $\{ (-1)^{n}B_{n}^{*}(x) \}$ is 
given by 
\begin{equation}
F_{B^{*}}(x;-z) :=\sum_{n=1}^{\infty} (-1)^{n} B_{n}^{*}(x)z^{n} = 
- \frac{1}{2} \log z - 
\frac{1}{2} \psi \left( z + 1/z + 2 +  x \right).
\end{equation}
\end{corollary}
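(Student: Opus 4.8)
The plan is to reuse the umbral representation that appears inside the proof of Theorem \ref{gf-bstar-poly}, rather than substituting $-z$ directly into its closed form. A naive substitution into $-\tfrac12\log z - \tfrac12\psi(z+1/z-1-x)$ would produce $\log(-z)$ together with an argument $-z-1/z-1-x$ for $\psi$, neither of which matches the asserted expression; the point is that the closed form is reached only after factoring a power of $z$ out of the argument of the logarithm, and this factoring transforms differently under $z\mapsto -z$.

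First I would record the intermediate identity from the proof of Theorem \ref{gf-bstar-poly}, namely
$$F_{B^{*}}(x;z) = -\tfrac{1}{2}\,{\rm eval}\left\{ \log\left( (1-z)^{2} - z\,\B(x) \right) \right\}.$$
Since $\sum_{n\geq 1} B_{n}^{*}(x)(-z)^{n} = \sum_{n\geq 1} (-1)^{n}B_{n}^{*}(x)z^{n}$, the left-hand side of the corollary is exactly $F_{B^{*}}(x;-z)$, and replacing $z$ by $-z$ in the representation above gives
$$F_{B^{*}}(x;-z) = -\tfrac{1}{2}\,{\rm eval}\left\{ \log\left( (1+z)^{2} + z\,\B(x) \right) \right\}.$$
Next I would invoke Lemma \ref{cool-sum} to replace $\B(x)$ by $x+\B$, factor $z$ out of the argument, and simplify $(1+z)^{2}/z = z + 1/z + 2$, obtaining
$$\log\left( (1+z)^{2} + z(x+\B) \right) = \log z + \log\left( z + 1/z + 2 + x + \B \right).$$
Applying the eval map to the second summand and using Theorem \ref{umbra-psi} (through the fact that $c+\B$ and $\B(c)$ have the same eval for a constant $c$, as in Lemma \ref{cool-sum}) yields $\psi\left( z+1/z+2+x \right)$, and combining the two pieces produces the stated formula.

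This computation is entirely parallel to the first half of the proof of Theorem \ref{gf-odd11}, where $F_{B^{*}}(-z)$ is computed in the non-polynomial case; the only new feature is carrying the parameter $x$ through the factoring step, and, unlike in the derivation of Theorem \ref{gf-bstar-poly}, no use of the constitutive relation $-\B = \B+1$ is needed here since the $\B(x)$ term already enters with a $+$ sign. I do not expect a genuine obstacle. The single point meriting a word of care is the domain of validity: Theorem \ref{umbra-psi} is stated for arguments $\geq \tfrac12$, so the identity is first established on the region where $z+1/z+2+x \geq \tfrac12$ (for real $z>0$ one has $z+1/z\geq 2$, so this holds comfortably), and then extends to the full domain by analytic continuation.
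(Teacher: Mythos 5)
Your proposal is correct and is essentially the paper's own proof: the paper likewise replaces $z$ by $-z$ in the intermediate umbral representation from the proof of Theorem \ref{gf-bstar-poly} (the line $-\tfrac{1}{2}\,{\rm eval}\left\{ \log\left( (1-z)^{2} - zx - z\mathfrak{B} \right) \right\}$, i.e.\ after $\mathfrak{B}(x)$ has been reduced to $x+\mathfrak{B}$ via Lemma \ref{cool-sum}), factors out $z$ to get $\log z + \log\left( z + 1/z + 2 + x + \mathfrak{B} \right)$, and concludes by Theorem \ref{umbra-psi}, with no appeal to $-\mathfrak{B}=\mathfrak{B}+1$ since $\mathfrak{B}$ already carries a plus sign. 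Your additional remarks—that naive substitution into the closed form would produce $\log(-z)$ and a wrong $\psi$-argument, and that the identity is first checked where $z+1/z+2+x\geq\tfrac{1}{2}$ and then extended—are sound refinements of the same route.
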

\begin{proof}
Replacing  $z$ by $-z$ in the third line of the proof of Theorem 
\ref{gf-bstar-poly} gives 
\begin{eqnarray*}
F_{B^{*}}(x;-z) & = & - {\rm{eval}} 
\left\{ \frac{1}{2} \log \left( 1 + 2 z + z^{2} + 
zx + z \B \right) \right\} \\
 & = & - \frac{1}{2} \log z - {\rm{eval}} \left\{ \frac{1}{2} 
\log \left( z + 1/z + 2 + x + \B \right) \right\}.
\end{eqnarray*}
\noindent
As before, the result now comes from Theorem \ref{umbra-psi}.
\end{proof}

The next step is to provide analytic expressions for the generating 
functions of the subsequences $\{ B_{2n+1}^{*}(x) \}$ and 
$\{ B_{2n}^{*}(x) \}$. These formulas will be used in Section 
\ref{sec-periodic} to obtain information about these subsequences 
and in particular to discuss periodic subsequences in Theorem \ref{zag-thm1a}.

\begin{corollary}
\label{gf-bstar-poly-odd}
The generating functions of the odd and even parts of the sequence of 
Zagier polynomials are given by 
\begin{equation}
\sum_{n=0}^{\infty} B_{2n+1}^{*}(x)z^{2n+1} =
\frac{1}{4} \psi \left( z + 1/z +2+x  \right) - 
\frac{1}{4} \psi \left( z + 1/z -1 - x  \right),
\label{diff-psi}
\end{equation}
\noindent
and 
\begin{equation}
\sum_{n=1}^{\infty} B_{2n}^{*}(x)z^{2n} =
- \frac{1}{2} \log z 
-\frac{1}{4}  \psi \left( z + 1/z +2+x  \right) 
- \frac{1}{4} \psi \left( z + 1/z -1 - x  \right).
\label{sum-psi}
\end{equation}
\end{corollary}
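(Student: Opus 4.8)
The plan is to obtain both identities directly from the two closed-form generating functions already in hand, by the standard device of splitting a power series into its even and odd parts. First I would record the two expansions side by side, namely $F_{B^{*}}(x;z)$ from Theorem \ref{gf-bstar-poly} and $F_{B^{*}}(x;-z)$ from Corollary \ref{gf-bstaralt-poly}:
\[
F_{B^{*}}(x;z) = \sum_{n=1}^{\infty} B_{n}^{*}(x) z^{n} = -\tfrac{1}{2}\log z - \tfrac{1}{2}\psi\left( z + 1/z - 1 - x \right),
\]
\[
F_{B^{*}}(x;-z) = \sum_{n=1}^{\infty} (-1)^{n} B_{n}^{*}(x) z^{n} = -\tfrac{1}{2}\log z - \tfrac{1}{2}\psi\left( z + 1/z + 2 + x \right).
\]

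Next I would isolate the odd-indexed terms by the half-difference and the even-indexed terms by the half-sum,
\[
\sum_{n=0}^{\infty} B_{2n+1}^{*}(x) z^{2n+1} = \frac{F_{B^{*}}(x;z) - F_{B^{*}}(x;-z)}{2}, \qquad \sum_{n=1}^{\infty} B_{2n}^{*}(x) z^{2n} = \frac{F_{B^{*}}(x;z) + F_{B^{*}}(x;-z)}{2},
\]
and then substitute the closed forms. In the difference the two $-\tfrac{1}{2}\log z$ contributions cancel, leaving $\tfrac{1}{4}\psi(z+1/z+2+x) - \tfrac{1}{4}\psi(z+1/z-1-x)$, which is \eqref{diff-psi}; in the sum they reinforce to $-\tfrac{1}{2}\log z$ and the two digamma terms each acquire a factor $\tfrac{1}{4}$, giving \eqref{sum-psi}. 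This is the same separation employed to pass from Theorem \ref{gf-bstar} to Theorem \ref{gf-odd11}.

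There is essentially no analytic difficulty in this argument; the single point requiring care is that $F_{B^{*}}(x;-z)$ must be taken from Corollary \ref{gf-bstaralt-poly} rather than produced by naively replacing $z$ with $-z$ in the closed form of $F_{B^{*}}(x;z)$. The naive substitution would mishandle the $\log z$ term, whose branch does not transform trivially under $z \mapsto -z$. The virtue of Corollary \ref{gf-bstaralt-poly} is precisely that it evaluates the alternating series directly from the umbral representation, so that both closed forms carry the identical summand $-\tfrac{1}{2}\log z$. This is what makes the logarithm survive only in the even part, as it must since the left-hand side of \eqref{diff-psi} is an odd function of $z$, and cancel entirely in the odd part.
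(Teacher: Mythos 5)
Your proposal is correct and is precisely the derivation the paper intends: the corollary follows from Theorem \ref{gf-bstar-poly} and Corollary \ref{gf-bstaralt-poly} by the half-sum/half-difference decomposition, exactly as in the passage from Theorem \ref{gf-bstar} to Theorem \ref{gf-odd11} for the numbers case. Your observation that one must use Corollary \ref{gf-bstaralt-poly} for $F_{B^{*}}(x;-z)$ — so that both closed forms carry the same $-\tfrac{1}{2}\log z$ term rather than a naively substituted $\log(-z)$ — is exactly the point of that corollary's umbral derivation.
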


The results of Corollary \ref{gf-bstar-poly-odd} correspond to the analogue 
of the ordinary generating function  for 
the Bernoulli polynomials. This is expressed in terms on the Hurwitz zeta
function as stated in Theorem \ref{thm-hurwitz}. The latter is defined by 
\begin{equation}
\zeta(s,a) = \sum_{n=0}^{\infty} \frac{1}{(n+a)^{s}}
\end{equation}
\noindent
and it has the integral representation (see \cite{temme-1996a}, page 76)
\begin{equation}
\zeta(s,a) = \frac{1}{\Gamma(s)} \int_{0}^{\infty} \frac{e^{-at} t^{s-1}}
{1 - e^{-t}} \, dt, \quad \realpart{s} > 1, \, \realpart{a} > 0.
\label{int-hurzeta}
\end{equation}
\noindent
The exponential generating function \eqref{gf-berpoly}, because it is given 
by an elementary function, is employed more 
frequently.

\begin{theorem}
\label{thm-hurwitz}
The generating function of the Bernoulli polynomials $B_{n}(x)$ is given by
\begin{equation}
\sum_{n=0}^{\infty} B_{n}(x) z^{n} = \frac{1}{z} \zeta \left( 2, 1/z - x + 1
\right).
\end{equation}
\end{theorem}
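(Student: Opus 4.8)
The plan is to read off the identity directly from the integral representation \eqref{int-hurzeta} of the Hurwitz zeta function, recognizing the resulting integral as the Laplace transform of the exponential generating function \eqref{gf-berpoly}. The whole computation is short; the only genuine issue is interpreting the equality, since the series on the left does not converge.

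First I would specialize \eqref{int-hurzeta} to $s=2$, giving
\[
\zeta(2,a) = \int_{0}^{\infty} \frac{e^{-at}\,t}{1-e^{-t}}\,dt,
\]
and then set $a = 1/z - x + 1$. Writing $e^{-at} = e^{xt}\,e^{-t}\,e^{-t/z}$ and using $e^{-t}/(1-e^{-t}) = 1/(e^{t}-1)$, the integrand collapses to $\dfrac{t\,e^{xt}}{e^{t}-1}\,e^{-t/z}$, so that
\[
\zeta\!\left(2,\,1/z - x + 1\right) = \int_{0}^{\infty} \frac{t\,e^{xt}}{e^{t}-1}\,e^{-t/z}\,dt.
\]
The next step is to insert the generating function \eqref{gf-berpoly} for $t e^{xt}/(e^{t}-1)$ and integrate term by term. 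Using $\int_{0}^{\infty} t^{n} e^{-t/z}\,dt = n!\,z^{n+1}$, valid for $\realpart(1/z) > 0$, the factorials cancel and one obtains $\zeta(2,\,1/z-x+1) = z\sum_{n=0}^{\infty} B_{n}(x)\,z^{n}$, which is the claim after dividing by $z$.

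The hard part is that the ordinary generating function $\sum_{n\ge 0} B_{n}(x)\,z^{n}$ has radius of convergence zero, because $B_{2n}$ grows like $(2n)!/(2\pi)^{2n}$; hence the term-by-term integration cannot be justified by uniform convergence, and the asserted identity must be read as an asymptotic expansion as $z \to 0^{+}$. The rigorous content is exactly Watson's lemma: since $t e^{xt}/(e^{t}-1)$ is analytic near $t=0$ with the convergent Taylor coefficients $B_{n}(x)/n!$, its Laplace transform admits the asymptotic expansion $\sum_{n} (B_{n}(x)/n!)\,n!\,z^{n+1} = z\sum_{n} B_{n}(x)\,z^{n}$. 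Equivalently, $z^{-1}\zeta(2,1/z-x+1)$ is the Borel sum of the divergent right-hand side. I would make this precise by splitting the integral at a fixed point, Taylor-expanding the analytic factor on the lower range with an explicit remainder, and bounding the tail integral; this remainder estimate is where the only real work lies, while the algebraic recognition of the integrand is immediate.
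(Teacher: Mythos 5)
Your computation is correct and is, at its core, the same as the paper's, merely run in the opposite direction. The paper starts from the series, writes $z^{n} = \int_{0}^{\infty} e^{-u} (zu)^{n}/n! \, du$, interchanges sum and integral so as to sum the exponential generating function \eqref{gf-berpoly} under the integral sign, and then substitutes $v = zu$ to recognize \eqref{int-hurzeta} with $s=2$ and $a = 1/z - x + 1$ (using $\Gamma(2)=1$); you start instead from $\zeta(2, 1/z - x + 1)$, collapse the integrand to $t e^{xt} e^{-t/z}/(e^{t}-1)$, and expand term by term via $\int_{0}^{\infty} t^{n} e^{-t/z} \, dt = n! \, z^{n+1}$ --- the identical exchange read backwards. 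The genuine difference is interpretive, and there your proposal is more careful than the paper's proof: you correctly observe that $\sum_{n} B_{n}(x) z^{n}$ has radius of convergence zero, so the interchange cannot be justified by any convergence theorem, and the displayed equation is honestly an asymptotic expansion as $z \to 0^{+}$ (equivalently, $z^{-1}\zeta(2, 1/z - x + 1)$ is the Borel sum of the divergent series) --- a point the paper passes over in silence, treating the manipulation as formal. Since $t e^{xt}/(e^{t}-1)$ is analytic in the disc $|t| < 2\pi$, Watson's lemma applied to its Laplace transform supplies exactly the remainder estimates you sketch (split the integral at a fixed point, Taylor-expand with explicit remainder on the finite range, bound the tail), so your plan closes the one gap the paper's formal argument leaves open. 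In short: same decomposition and same key identities, but your version makes explicit the asymptotic/Borel reading under which the theorem is actually true, which is worth noting since the same caveat applies to the companion generating functions \eqref{gf-zagier1} and Theorem \ref{gf-bstar} elsewhere in the paper.
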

\begin{proof}
The integral representation of the gamma function 
\begin{equation}
\Gamma(s) = \int_{0}^{\infty} u^{s-1}e^{-u} \, du, \quad \realpart{s} > 0
\end{equation}
\noindent
and the special value $\Gamma(n+1) = n!$ give 
\begin{equation}
\sum_{n=0}^{\infty} B_{n}(x) z^{n} = 
\int_{0}^{\infty} e^{-u} \sum_{n=0}^{\infty} \frac{B_{n}(x)}{n!} (zu)^{n} \, du.
\end{equation}
\noindent
The generating function \eqref{gf-berpoly} is used to produce 
\begin{equation}
\sum_{n=0}^{\infty} B_{n}(x) z^{n} = 
z \int_{0}^{\infty} \frac{u}{1 - e^{-zu}} e^{-(1-xz+z)u}  \, du.
\end{equation}
\noindent
The change of variables $v = zu$ and \eqref{int-hurzeta} complete the proof.
\end{proof}

\section{Some arithmetic questions}
\label{sec-arithmetic} 
\setcounter{equation}{0}

There is marked difference in the arithmetical behavior of the numbers 
$B_{n}^{*}(j)$ according to the parity of $n$. For instance 
\begin{equation*}
\{ B_{n}^{*}(0): \, 1 \leq n \leq 10 \} = 
\left\{ \frac{3}{4}, \, \frac{1}{24}, \, - \frac{1}{4}, \, - \frac{27}{80}, \, 
- \frac{1}{4}, \, - \frac{29}{1260}, \, \frac{1}{4}, \, 
\frac{451}{1120} \, \frac{1}{4}, \, - \frac{65}{264} \right\}
\end{equation*}
\noindent
and
\begin{equation*}
\{ B_{n}^{*}(1): \, 1 \leq n \leq 10 \} = 
\left\{ \frac{5}{4}, \, \frac{25}{24}, \,  \frac{5}{4}, \,  \frac{133}{80}, \, 
\frac{9}{4}, \,  \frac{3751}{1260}, \, \frac{15}{4}, \, 
\frac{4931}{1120} \, \frac{19}{4}, \, \frac{1255}{264} \right\}.
\end{equation*}
\noindent
On the other hand, keeping $n$ fixed and varying $j$ gives 
\begin{equation*}
\{ B_{1}^{*}(j): \, 1 \leq j \leq 10 \} = 
\left\{ \frac{5}{4}, \, \frac{7}{4}, \,  \frac{9}{4}, \,  \frac{11}{4}, \, 
 \frac{13}{4}, \,  \frac{15}{4}, \, \frac{17}{4}, \, 
\frac{19}{4} \, \frac{21}{4}, \, \frac{23}{4} \right\}
\end{equation*}
\noindent
and 
\begin{equation*}
\{ B_{2}^{*}(j): \, 1 \leq j \leq 10 \} = 
\left\{ \frac{25}{24}, \, \frac{61}{24}, \,  \frac{109}{24}, \, 
 \frac{169}{24}, \, 
 \frac{241}{24}, \,  \frac{325}{24}, \, \frac{421}{24}, \, 
\frac{529}{24} \, \frac{649}{24}, \, \frac{781}{24} \right\}.
\end{equation*}
\noindent
This suggests that every element in the list
$\{ B_{n}^{*}(j): \, j \geq 1 \}$ has a denominator that is 
\textit{independent} of $j$, therefore this value is also 
the denominator of the 
modified Bernoulli number $B_{n}^{*}$. Assume that this is true and define 
$\alpha(n)$ be this common value; that is,
\begin{equation}
\alpha(n) = \text{denominator}(B_{n}^{*}).
\end{equation}
\noindent
As usual, the parity of $n$ plays a role in the results. 

\smallskip

The next theorem shows, for the case $n$ odd, that the function $\alpha(n)$
is well defined.

\begin{theorem}
For $j \in \mathbb{Z}$, the values $4B_{2n+1}^{*}(j)$ are integers. That is,
\begin{equation}
\alpha(2n+1) = 4.
\end{equation}
\end{theorem}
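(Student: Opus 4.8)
The plan is to read the integrality directly off the generating function in Corollary \ref{gf-bstar-poly-odd}. Setting $x = j$ with $j \in \mathbb{Z}$, the two digamma arguments appearing in \eqref{diff-psi}, namely $z + 1/z + 2 + j$ and $z + 1/z - 1 - j$, differ by the \emph{integer} $2j+3$. The idea is to collapse the difference $\psi(z + 1/z + 2 + j) - \psi(z + 1/z - 1 - j)$ into a finite sum of simple fractions using the recurrence $\psi(w + m) = \psi(w) + \sum_{k=0}^{m-1}(w+k)^{-1}$ (entry $8.365$ in \cite{gradshteyn-2007a}), exactly as in the proof of Theorem \ref{gf-odd11}.

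First I would treat the case $j \geq -1$, so that $m = 2j + 3 \geq 1$. Writing $w = z + 1/z - 1 - j$ and applying the recurrence gives
\begin{equation*}
\sum_{n=0}^{\infty} B_{2n+1}^{*}(j) z^{2n+1} = \frac{1}{4}\sum_{k=0}^{2j+2} \frac{1}{z + 1/z - 1 - j + k} = \frac{1}{4}\sum_{\ell=-j-1}^{j+1} \frac{1}{z + 1/z + \ell},
\end{equation*}
after the reindexing $\ell = k - j - 1$. The case $j \leq -2$ is handled in the same way with the roles of the two arguments reversed: now $2j + 3 < 0$, so the recurrence is applied with $m = -(2j+3) \geq 1$ and produces the analogous finite sum (carrying an overall minus sign), again indexed by a range of integers $\ell$.

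The key step is the observation that each summand, expanded as a power series in $z$ about $z = 0$, has integer coefficients. Indeed
\begin{equation*}
\frac{1}{z + 1/z + \ell} = \frac{z}{1 + \ell z + z^{2}},
\end{equation*}
and since $\ell \in \mathbb{Z}$ and the denominator has constant term $1$, the coefficients $c_m$ of $z^{m}$ in $1/(1 + \ell z + z^{2})$ satisfy $c_{m} + \ell c_{m-1} + c_{m-2} = 0$ with $c_0 = 1$ and $c_1 = -\ell$, hence all lie in $\mathbb{Z}$ (these are, up to normalization, the Chebyshev values reviewed in Section \ref{sec-chebyshev}). Multiplying the generating identity by $4$ therefore writes $4\sum_{n} B_{2n+1}^{*}(j) z^{2n+1}$ as a finite $\mathbb{Z}$-linear combination of power series with integer coefficients, so each $4 B_{2n+1}^{*}(j)$ is an integer. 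Combined with Zagier's values in Theorem \ref{zagier-per6}, whose denominators equal $4$, this yields $\alpha(2n+1) = 4$.

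I expect the only genuine bookkeeping obstacle to be the sign and the summation range in the two cases $j \geq -1$ and $j \leq -2$; one must check that the reindexing produces integer shifts $\ell$ in both. The arithmetic heart of the argument, that a rational function of $z$ whose denominator has constant term $1$ and integer coefficients has an integer power series, is immediate and requires no further estimates.
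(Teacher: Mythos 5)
Your proposal is correct and follows essentially the same route as the paper: both apply the recurrence $\psi(w+m)=\psi(w)+\sum_{k=0}^{m-1}(w+k)^{-1}$ to the generating function \eqref{diff-psi} to collapse the difference of digammas into a finite sum of terms $1/(z+1/z+\ell)$ with $\ell\in\mathbb{Z}$, and then read off integrality from the fact that $z/(1+\ell z+z^{2})$ has an integer power series (the paper merely pairs $\ell$ with $-\ell$ first, a cosmetic difference). If anything, you are slightly more careful than the paper, which writes the computation only for $j\geq -1$ and leaves the case $j\leq -2$ (handled either by your reversed-argument recurrence or by the symmetry of Theorem \ref{thm-symm}) implicit.
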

\begin{proof}
The generating function \eqref{gf-bstar-poly-odd} gives 
\begin{align*}
\sum_{n=0}^{\infty}4B_{2n+1}^{*}(j)z^{2n+1}&=
\psi\left(z+1/z+j+2\right)-
\psi\left(z+1/z-j-1\right) \nonumber\\
&=\psi\left(z+1/z\right)+
\sum_{k=0}^{j+1}\frac{1}{z+1/z+k}-
\psi\left(z+1/z\right)+ \nonumber \\
& \qquad \qquad + \sum_{k=0}^{j}\frac{1}{z+1/z-j-1+k} \nonumber\\
&=\sum_{k=0}^{j+1}\frac{1}{z+ 1/z+k}+
\sum_{k=0}^{j+1}\frac{1}{z+1/z-j-1+k}-\frac{1}{z+1/z}.
\end{align*}
Replace $k$ by $j+1-k$ in the second sum to obtain
\begin{align*}
\sum_{n=0}^{\infty}4B_{2n+1}^{*}(j)z^{2n+1}&=
\sum_{k=0}^{j+1}\left(\frac{1}{z+1/z+k}+\frac{1}{z+ 1/z-k}\right)-
\frac{1}{z+ 1/z} \nonumber\\
&=2z\sum_{k=0}^{j+1}\frac{z^2+1}{(z^2+1)^2-k^2z^2}-\frac{z}{z^2+1}\nonumber\\
&=2z\sum_{k=1}^{j+1}\frac{z^2+1}{(z^2+1)^2-k^2z^2} + 
\frac{z}{z^{2}+1}.
\end{align*}
This implies $4B_{2n+1}^{*}(j)\in\mathbb{Z}$.
\end{proof}

\begin{note}
Arithmetic questions for the numbers $B_{2n}^{*}(j)$ seem to be more delicate.
The values $\alpha(2n)$ seem to be divisible by $4$ and the 
list of  $\frac{1}{4} \alpha(2n)$ begins with 
\begin{equation*}
\{ 6, \, 20, \, 315, \, 280, \, 66, \, 3003, \, 78, \, 9520, \, 
305235, \, 20900, \, 138, \, 19734, \, 6, \, 7540, \, 15575175 \},
\end{equation*}
\noindent
for $1 \leq n \leq 15$. The exact 
power of a prime $p$ that divides $\alpha(2n)$ exhibits some 
interesting patterns. For instance, Figure
\ref{den-2} shows this function for $p=2$. 

{{
\begin{figure}[ht]
\begin{center}
\centerline{\epsfig{file=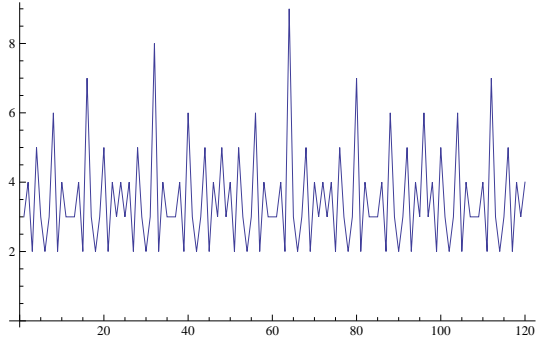,width=20em,angle=0}}
\caption{Power of $2$ that divides denominator of $B_{2n}^{*}(j)$}
\label{den-2}
\end{center}
\end{figure}
}}

\noindent
The data suggests that the  prime factors of $\alpha(2n)$ are bounded 
by $2n+1$. These questions will be addressed in a future paper.
\end{note}

\section{Some auxiliary binomial sums}
\label{sec-binsums} 
\setcounter{equation}{0}

This section contains the proofs of two identities for some sums involving 
binomial coefficients. These sums will be used in the Section 
\ref{sec-derivatives} to give an expression for the derivatives of Zagier 
polynomials. The identities given here are established using the method 
of creative telescoping described in \cite{petkovsek-1996a}. 

\begin{lemma}
\label{part-1}
For $n \in \mathbb{N}$,
\begin{equation}
\sum_{r=1}^{n-1} (-1)^{r} \frac{2(r+1)}{n+r+1} 
\binom{2r-1}{r} \binom{n+r+1}{2r+2} 
= -\left\lfloor \frac{n}{2} \right\rfloor.
\vm
\end{equation}
\end{lemma}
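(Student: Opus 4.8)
The plan is to first collapse the summand into a single product of two binomial coefficients, reducing the problem to a one-parameter sum whose ordinary generating function can be evaluated in closed form. \textbf{Step 1 (Simplify the summand).} Writing the $n$-dependent factor with factorials, the combination $\frac{2(r+1)}{n+r+1}\binom{n+r+1}{2r+2}$ collapses nicely: since $2(r+1)=2r+2$ is exactly the leading factor of $(2r+2)!$, one finds
\[
\frac{2(r+1)}{n+r+1}\binom{n+r+1}{2r+2}=\binom{n+r}{2r+1},
\]
so the summand equals $(-1)^{r}\binom{2r-1}{r}\binom{n+r}{2r+1}$. Because $\binom{n+r}{2r+1}=0$ as soon as $2r+1>n+r$, i.e. $r\ge n$, every term with $r\ge n$ vanishes and the upper limit $n-1$ may be replaced by $\infty$. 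Thus it suffices to prove
\[
S(n):=\sum_{r\ge 1}(-1)^{r}\binom{2r-1}{r}\binom{n+r}{2r+1}=-\left\lfloor\tfrac{n}{2}\right\rfloor.
\]

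\textbf{Step 2 (Pass to the generating function).} I would compute $\Phi(y):=\sum_{n\ge0}S(n)\,y^{n}$ by interchanging the two summations, which is legitimate as formal power series because the inner series $\sum_{n}\binom{n+r}{2r+1}y^{n}=\dfrac{y^{r+1}}{(1-y)^{2r+2}}$ begins at $y^{r+1}$, so only finitely many $r$ contribute to each coefficient of $y^{N}$. Factoring out $\frac{y}{(1-y)^2}$ and using $\binom{2r-1}{r}=\tfrac12\binom{2r}{r}$ for $r\ge1$ together with $\sum_{r\ge0}\binom{2r}{r}w^{r}=(1-4w)^{-1/2}$, with $w=\dfrac{-y}{(1-y)^{2}}$, gives
\[
\Phi(y)=\frac{y}{(1-y)^{2}}\cdot\frac12\left(\frac{1}{\sqrt{1-4w}}-1\right).
\]
The crucial simplification is $1-4w=\dfrac{(1-y)^{2}+4y}{(1-y)^{2}}=\dfrac{(1+y)^{2}}{(1-y)^{2}}$, so that $\sqrt{1-4w}=\dfrac{1+y}{1-y}$.

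\textbf{Step 3 (Match against $-\lfloor n/2\rfloor$).} Substituting this radical and simplifying reduces $\Phi(y)$ to $\dfrac{-y^{2}}{(1-y)^{2}(1+y)}$. It then remains only to recognize this rational function as the generating series of $-\lfloor n/2\rfloor$: grouping the sequence $0,0,1,1,2,2,\dots$ in pairs yields $\sum_{n}\lfloor n/2\rfloor y^{n}=(1+y)\sum_{k\ge1}k\,y^{2k}=\dfrac{y^{2}}{(1-y)^{2}(1+y)}$, whence $\Phi(y)=\sum_{n}\bigl(-\lfloor n/2\rfloor\bigr)y^{n}$, and the identity follows by comparing coefficients of $y^{n}$.

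\textbf{Main obstacle.} Once Step~1 is in hand the computation is essentially routine; the single genuine point of care is the radical $\sqrt{1-4w}$, where one must select the branch $(1+y)/(1-y)$ (the value $1$ at $y=0$, tracked via $w\to0$ as $y\to0$) rather than its negative. The floor function, which at first sight looks like an obstruction to any closed form, enters transparently through the factor $(1+y)$ in the denominator, equivalently the parity splitting $1/(1-y^{2})$. As an alternative that matches the remark on creative telescoping, applying Zeilberger's algorithm to $S(n)$ produces the recurrence $S(n)=S(n-2)-1$, which together with the base values $S(1)=0$ and $S(2)=-1$ yields $S(n)=-\lfloor n/2\rfloor$ at once.
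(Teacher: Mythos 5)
Your proof is correct, but it takes a genuinely different route from the paper's. After the same initial collapse (the paper rewrites the summand in the factorial form $F(n,r)=\frac{(-1)^r (n+r)!}{2(2r+1)(r!)^2(n-r-1)!}$, which is exactly your $(-1)^r\binom{2r-1}{r}\binom{n+r}{2r+1}$), the paper proceeds by creative telescoping: the WZ-method supplies a certificate $G(n,r)$ with $F(n+2,r)-F(n,r)=G(n,r+1)-G(n,r)$, and summing over $r$ --- crucially including the $r=0$ term, so that the floor function disappears --- shows that $f(n)=\sum_{r=0}^{n-1}F(n,r)$ is $2$-periodic, equal to $\tfrac12$ for $n$ odd and $0$ for $n$ even; subtracting $F(n,0)=n/2$ then yields $-\lfloor n/2\rfloor$. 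Your generating-function argument replaces the certificate by the closed form $\Phi(y)=-y^{2}/\bigl((1-y)^{2}(1+y)\bigr)$, and every step checks out: the extension of the sum to $r=\infty$ is justified since $\binom{n+r}{2r+1}=0$ for $r\ge n$, the simplification $1-4w=\bigl((1+y)/(1-y)\bigr)^{2}$ with $w=-y/(1-y)^{2}$ is correct, the branch with value $1$ at $y=0$ is the right one for formal power series, and the partial-fraction structure ($(1-y)^{-2}$ giving linear growth, $(1+y)^{-1}$ giving the parity splitting) explains transparently where $\lfloor n/2\rfloor$ comes from --- structural information the telescoping proof does not exhibit. What the paper's method buys in exchange is purely routine verifiability: checking the certificate is rational-function algebra, with no appeal to analytic identities such as $\sum_{r}\binom{2r}{r}w^{r}=(1-4w)^{-1/2}$. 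Your closing remark that Zeilberger's algorithm gives $S(n)=S(n-2)-1$ with $S(1)=0$, $S(2)=-1$ is in substance the paper's proof (its homogeneous recurrence $f(n+2)=f(n)$ is yours after absorbing the $r=0$ term, since $F(n,0)=n/2$ increases by $1$ when $n\mapsto n+2$), so you have in effect sketched both routes.
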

\begin{proof}
The summand on the left-hand side is written as 
\begin{equation*}
F(n,r) = \frac{(-1)^{r} \, (n+r)!}{2(2r+1) r!^{2} (n-r-1)!}.
\end{equation*}
\noindent
Observe that $F(n,r)$ vanishes for $r<0$ or $r>n-1$. The method of 
Wilf-Zeilberger lends the companion function 
\begin{equation}
G(n,r) = \frac{(-1)^{r+1} (n+r)!}{(n+1) (r-1)!^{2} (n-r+1)!}
\end{equation}
\noindent
together with the second order recurrence 
\begin{equation}
F(n+2,r) - F(n,r) = G(n,r+1)-G(n,r).
\end{equation}
\noindent
Sum both sides over all integers $r$ and check that the right-hand sum vanishes
to produce 
\begin{equation}
\sum_{r \in \mathbb{Z}} F(n+2,r) = \sum_{r \in \mathbb{Z}} F(n,r).
\label{allsums}
\end{equation}
\noindent
Define 
\begin{equation}
f(n) = \sum_{r=0}^{n-1} F(n,r).
\end{equation}
\noindent
Then \eqref{allsums} gives $f(n+2) = f(n)$. The initial conditions 
$f(1) = 1/2$ and $f(2) = 0$ show that 
\begin{equation}
f(n) = \begin{cases}
 \tfrac{1}{2} & \quad \text{ for } n \text{ odd } \\
 0 & \quad \text{ for } n \text{ even. }
\end{cases}
\end{equation}
\noindent
The desired sum starts at $r=1$, so its value is $f(n) - F(n,0)$. Thus, 
$F(n,0) = n/2$ gives the result.
\end{proof}

\begin{lemma}
\label{part-2}
For $n \in \mathbb{N}$ and $1 \leq k \leq n-1$, define 
\begin{equation}
u(n,k) = \sum_{r=k}^{n-1} \frac{2(-1)^{r}r(r+1)}{n+r+1} \binom{n+r+1}{2r+2} 
\left[ \binom{2r-1}{r-k} - \binom{2r-1}{r-k-1} \right].
\label{def-unk}
\end{equation}
\noindent
Then for $n$ even  
\begin{equation}
u(n,k) = \begin{cases} 
    - k & \quad \text{ for } k \text{ odd}, \\
    0 & \quad \text{ for } k \text{ even}
\end{cases}
\label{un-1}
\end{equation}
\noindent
and for $n$ odd
\begin{equation}
u(n,k) = \begin{cases} 
    0 & \quad \text{ for } k \text{ odd}, \\
    k & \quad \text{ for } k \text{ even}.
\end{cases}
\label{un-2}
\end{equation}
\end{lemma}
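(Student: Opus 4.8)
The plan is to mirror the Wilf--Zeilberger argument used for Lemma \ref{part-1}, now carrying the extra index $k$ along as a spectator parameter. Write the summand as
$$F(n,r,k) = \frac{2(-1)^{r}r(r+1)}{n+r+1}\binom{n+r+1}{2r+2}\left[\binom{2r-1}{r-k}-\binom{2r-1}{r-k-1}\right].$$
The first thing to check is that, for fixed $k\ge 1$, the function $F(n,\cdot,k)$ has finite support in $r$: the factor $\binom{n+r+1}{2r+2}$ forces $F=0$ once $r\ge n$, while both binomials in the bracket vanish for $r<k$ (their lower indices $r-k$ and $r-k-1$ are then negative), and at $r=k$ the bracket collapses to $\binom{2k-1}{0}-\binom{2k-1}{-1}=1$. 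Hence $\sum_{r=k}^{n-1}F(n,r,k)=\sum_{r\in\mathbb{Z}}F(n,r,k)=u(n,k)$, which is what lets me sum telescoping identities freely over all integers $r$.

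Next I would run creative telescoping with respect to the shift $n\mapsto n+2$ (the relevant period, since the answer depends only on the parity of $n$). The goal is a companion $G(n,r,k)$, again of finite support in $r$, satisfying
$$F(n+2,r,k)-F(n,r,k)=G(n,r+1,k)-G(n,r,k).$$
Summing over $r\in\mathbb{Z}$, the right-hand side telescopes to $0$ using the finite support of $G$, which yields the periodicity
$$u(n+2,k)=u(n,k)\qquad\text{for all }n\ge k+1.$$
Producing this $G$ and verifying the identity is the technical heart of the argument; as in Lemma \ref{part-1} I expect the certificate to be a rational multiple of $F$ by a low-degree factor, with the verification reducing to a polynomial identity in $n$ and $r$. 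I expect the main nuisance to be not the certificate itself but confirming that the boundary contributions of $G$ genuinely vanish at both ends of the support, uniformly in $k$, so that the telescoped sum is truly $0$.

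With periodicity established, it remains to settle two base cases for each $k$, namely the smallest admissible values $n=k+1$ and $n=k+2$. For $n=k+1$ the sum has the single term $r=k$; there $n+r+1=2k+2$, $\binom{2k+2}{2k+2}=1$, and the bracket equals $1$, so a one-line simplification gives $u(k+1,k)=(-1)^{k}k$. For $n=k+2$ the sum has the two terms $r=k$ and $r=k+1$, whose brackets are $1$ and $2k$ respectively; evaluating each shows they are negatives of one another, so $u(k+2,k)=0$. Since $k+1$ and $k+2$ have opposite parities, these two base cases together with $u(n+2,k)=u(n,k)$ determine $u(n,k)$ for every $n\ge k+1$: the value is $(-1)^{k}k$ when $n\not\equiv k\pmod 2$ and $0$ when $n\equiv k\pmod 2$. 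Sorting this single congruence statement into the four parity combinations of $(n,k)$ reproduces exactly the formulas \eqref{un-1} and \eqref{un-2}, completing the proof.
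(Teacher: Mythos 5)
Your proposal follows exactly the strategy of the paper's own proof: a WZ certificate producing the two-step recurrence $u(n+2,k)=u(n,k)$, followed by base cases at the two smallest admissible $n$. Everything you actually execute is correct: the finite-support analysis (the bracket kills $r<k$, the factor $\binom{n+r+1}{2r+2}$ kills $r\geq n$), both base evaluations — the single term at $n=k+1$ indeed gives $(-1)^{k}k$, and at $n=k+2$ the two terms are $2(-1)^{k}k(k+1)$ and $2(-1)^{k+1}k(k+1)$, so they cancel — and the parity bookkeeping that turns ``$u(n,k)=(-1)^{k}k$ when $n\not\equiv k \pmod 2$, else $0$'' into \eqref{un-1} and \eqref{un-2}. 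Your base cases are in fact more explicit than the paper's, which simply says the argument ``is completed as before.''

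The one genuine gap is the step you yourself label the technical heart: no certificate $G$ is ever produced, so the periodicity $u(n+2,k)=u(n,k)$ — the load-bearing claim — remains an expectation rather than a proof. Zeilberger's algorithm guarantees \emph{some} recurrence in $n$ with polynomial coefficients, but not a priori the pure two-step, homogeneous recurrence with vanishing boundary terms that your telescoping argument requires; that specific shape must be exhibited and verified. The paper closes this in two moves. First, a preliminary simplification via the ballot-type identity $\binom{2r-1}{r-k}-\binom{2r-1}{r-k-1}=\frac{k}{r}\binom{2r}{r-k}$ (the same identity used in the proof of Theorem \ref{inv-modber}) rewrites your raw summand as exactly $k$ times $F(n,r,k)=(-1)^{r}\binom{n+r}{2r+1}\binom{2r}{r-k}$. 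Second, for this simplified $F$ it exhibits the explicit certificate
\begin{equation*}
G(n,r,k)=\frac{2(n+1)(2r-1)(-1)^{r+1}}{(n+k+1)(n-k+1)}\binom{n+r}{2r-1}\binom{2r-2}{r-k-1},
\end{equation*}
which has finite support in $r$, so the telescoped sum vanishes as needed. Since your unsimplified summand is precisely $kF$, the function $kG$ certifies it as well; with that supplied, the remainder of your write-up goes through verbatim.
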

\begin{proof}
A routine binomial simplification gives 
\begin{equation}
u(n,k) = k \sum_{r=k}^{n-1} (-1)^{r}  \binom{n+r}{2r+1} \binom{2r}{r-k}.
\vm
\end{equation}
\noindent
This motivates the definition $\bar{u}(n,k) = u(n,k)/k$ and the assertion 
in \eqref{un-1} and \eqref{un-2} amounts to showing
\begin{equation}
\bar{u}(n,k) = \begin{cases} 
    +1 & \quad \text{ for } n \text{ odd}, k \text{ even} \\
    -1 & \quad \text{ for } n \text{ even}, k \text{ odd} \\
     0 & \quad \text{ otherwise.}
\end{cases}
\end{equation}
\noindent
The proof is similar to the one presented for 
Lemma \ref{part-1}. Introduce the functions 
$F(n,r,k) = (-1)^{r} \binom{n+r}{2r+1} \binom{2r}{r-k}$ and use the 
WZ-method to find the function 
\begin{equation}
G(n,r,k) = \frac{2 (n+1)(2r-1)(-1)^{r+1} }{(n+k+1)(n-k+1)} 
\binom{n+r}{2r-1} \binom{2r-2}{r-k-1}
\end{equation}
\noindent
companion to $F$ and the equation 
\begin{equation}
F(n+2,r,k) - F(n,r,k) = G(n,r+1,k) - G(n,r,k).
\end{equation}
\noindent
The argument is completed as before.
\end{proof}

\section{The derivatives of Zagier polynomials}
\label{sec-derivatives} 
\setcounter{equation}{0}

Differentiation of the 
generating function for Bernoulli polynomials 
\eqref{gf-berpoly} gives the relation 
\begin{equation}
\frac{d}{dx} B_{n}(x) = nB_{n-1}(x).
\label{der-berpoly0}
\end{equation}
\noindent
This section presents the analogous result for the Zagier polynomials.  The 
proof employs an expression for $B_{n}(x)$ in terms of $B_{n}^{*}(x)$; that is 
the inversion of \eqref{zag-polydef}. The proof is 
identical to that of Theorem \ref{inv-modber}, so
it is omitted.

\begin{theorem}
\label{inv-poly}
The sequence of Bernoulli polynomial $B_{n}(x)$ is given in terms of 
the Zagier polynomials  $B_{n}^{*}(x)$ by
\begin{equation*}
B_{n}(x) = 2n \sum_{k=1}^{n} (-1)^{n+k} 
\left[ \binom{2n-1}{n-k} - \binom{2n-1}{n-k-1} \right] B_{k}^{*}(x) +
2 (-1)^{n} \binom{2n-1}{n},
\vm
\end{equation*}
\noindent
for $n \geq 1$. 
\end{theorem}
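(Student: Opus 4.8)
The plan is to reproduce the argument of Theorem \ref{inv-modber} line for line, since in that proof the Bernoulli numbers enter only as the coefficient sequence fed into a purely combinatorial inversion; replacing each $B_k$ by $B_k(x)$ and carrying $x$ through as an inert parameter should deliver the stated identity. First I would introduce the shifted polynomial $\overline{B}_n(x) = B_n^*(x) - 1/n$. The shift is as clean here as in the numerical case precisely because the $r=0$ term of \eqref{poly-ber-st} contributes $\binom{n}{0} B_0(x)/n = 1/n$, using $B_0(x) \equiv 1$. Thus
\[
\overline{B}_n(x) = \sum_{k=1}^{n} \binom{n+k}{2k} \frac{B_k(x)}{n+k},
\]
and the same rearrangement $\binom{n+k}{2k}\frac{1}{n+k} = \frac{(n+k-1)!}{(2k-1)!\,(n-k)!}\frac{1}{2k}$ as in the numerical lemma yields
\[
\overline{B}_n(x) = \sum_{k=1}^{n} \binom{n+k-1}{n-k}\frac{B_k(x)}{2k}.
\]

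Next I would apply the Riordan inversion pair from \cite[(23), p. 67]{riordan-1968a} with parameter $p = -1$ to the sequences $a_n = \overline{B}_n(x)$ and $b_k = B_k(x)/(2k)$. Since this inversion is a formal identity between two sequences linked by a lower-triangular binomial transform, it holds for each fixed $x$ with no analytic hypotheses, so $x$ plays no active role. The inversion gives
\[
\frac{B_n(x)}{2n} = \sum_{k=1}^{n} (-1)^{n+k}\left[\binom{2n-1}{n-k} - \binom{2n-1}{n-k-1}\right]\overline{B}_k(x),
\]
where the $k=0$ term vanishes because $\binom{2n-1}{n} = \binom{2n-1}{n-1}$. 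Substituting $\overline{B}_k(x) = B_k^*(x) - 1/k$ separates the right-hand side into the desired main sum and a correction term $-\sum_{k=1}^{n} (-1)^{n+k}[\binom{2n-1}{n-k} - \binom{2n-1}{n-k-1}]/k$, which is exactly the numerical sum evaluated in \eqref{nice-sum}. Clearing the factor $2n$ then produces the claimed formula, the free term $2(-1)^n\binom{2n-1}{n}$ arising entirely from \eqref{nice-sum}.

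I expect no genuine obstacle, which is why the authors omit the proof: every manipulation touching $x$ is $x$-independent combinatorics, while the only genuinely $x$-dependent objects, the coefficients $B_k(x)$, pass through the inversion untouched. The single point deserving care is the constant-term bookkeeping: one must confirm that the $r=0$ contribution to $B_n^*(x)$ is truly free of $x$, so that the shift $\overline{B}_n(x)$ and the evaluation \eqref{nice-sum} reproduce the same free term $2(-1)^n\binom{2n-1}{n}$ as in the numerical case. As $B_0(x) \equiv 1$, this is automatic, and the identity \eqref{nice-sum} carries over verbatim without any dependence on $x$.
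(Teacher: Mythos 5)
Your proposal is correct and is precisely the argument the paper intends: the paper omits the proof of Theorem \ref{inv-poly} on the grounds that it is identical to that of Theorem \ref{inv-modber}, and your line-by-line transcription --- the shift $\overline{B}_n(x) = B_n^*(x) - 1/n$ justified by $B_0(x) \equiv 1$, the Riordan inversion pair with $p = -1$, and the correction sum \eqref{nice-sum} supplying the free term $2(-1)^n\binom{2n-1}{n}$ --- is exactly that proof with $x$ carried through as an inert parameter. Your added remark that the $k=0$ term is harmless because $\binom{2n-1}{n} = \binom{2n-1}{n-1}$ tidies a point the paper leaves implicit but does not change the route.
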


The analogue of \eqref{der-berpoly0} is established next.

\begin{theorem}
\label{der-mod-zag}
The derivatives of the Zagier polynomials satisfy the relation
\begin{equation*}
\frac{d}{dx} B_{n}^{*}(x) = \sum_{j=1}^{\lf \frac{n}{2} \rf} 
(2j-1)
B_{2j-1}^{*}(x)  \quad \text{ for } n \text{ even}
\end{equation*}
\noindent
and 
\begin{equation*}
\frac{d}{dx} B_{n}^{*}(x) = \frac{1}{2} + \sum_{j=1}^{\lf \frac{n}{2} \rf} 
2j B_{2j}^{*}(x)  \quad \text{ for } n \text{ odd}.
\end{equation*}
\noindent
\end{theorem}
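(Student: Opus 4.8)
The plan is to differentiate the defining series \eqref{poly-ber-st} term by term, reduce the resulting Bernoulli polynomials back to Zagier polynomials via the inversion formula of Theorem \ref{inv-poly}, and then collapse the two double sums that appear using the binomial identities of Lemmas \ref{part-1} and \ref{part-2}. Applying the derivative rule \eqref{der-berpoly0} to \eqref{poly-ber-st} and discarding the vanishing $r=0$ term gives
\begin{equation*}
\frac{d}{dx} B_{n}^{*}(x) = \sum_{r=1}^{n} \binom{n+r}{2r} \frac{r B_{r-1}(x)}{n+r} = \sum_{s=0}^{n-1} \binom{n+s+1}{2s+2} \frac{(s+1) B_{s}(x)}{n+s+1},
\end{equation*}
after the shift $s = r-1$. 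The $s=0$ term equals $\binom{n+1}{2}/(n+1) = n/2$, so I would separate it off and substitute the inversion formula of Theorem \ref{inv-poly} for each $B_{s}(x)$ with $s \geq 1$.

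The substitution produces two contributions. The first, coming from the constant summand $2(-1)^{s} \binom{2s-1}{s}$ in Theorem \ref{inv-poly}, is exactly
\begin{equation*}
\sum_{s=1}^{n-1} (-1)^{s} \frac{2(s+1)}{n+s+1} \binom{2s-1}{s} \binom{n+s+1}{2s+2},
\end{equation*}
which Lemma \ref{part-1} evaluates as $-\lf n/2 \rf$. The second contribution is a double sum over $B_{k}^{*}(x)$; after interchanging the order of summation, the coefficient of $B_{k}^{*}(x)$ is
\begin{equation*}
(-1)^{k} \sum_{s=k}^{n-1} \frac{2s(s+1)(-1)^{s}}{n+s+1} \binom{n+s+1}{2s+2} \left[ \binom{2s-1}{s-k} - \binom{2s-1}{s-k-1} \right] = (-1)^{k} u(n,k),
\end{equation*}
precisely the quantity $u(n,k)$ defined in \eqref{def-unk}, whose value is supplied by Lemma \ref{part-2}.

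It remains to assemble these pieces according to parity. Adding the boundary term $n/2$ to $-\lf n/2 \rf$ gives $0$ when $n$ is even and $\tfrac{1}{2}$ when $n$ is odd, which accounts for the constant term in each case. For the sum over $k$, Lemma \ref{part-2} gives $(-1)^{k} u(n,k) = k$ when $n$ is even and $k$ is odd (and $0$ when $k$ is even), and $(-1)^{k} u(n,k) = k$ when $n$ is odd and $k$ is even (and $0$ when $k$ is odd); reindexing the surviving $k$ as $2j-1$ or $2j$ yields the two stated formulas. Since both Lemmas \ref{part-1} and \ref{part-2} are already established, the only real work is the bookkeeping of the reindexing, the isolation of the $s=0$ boundary term, and the verification that the interchanged inner sum matches \eqref{def-unk} term for term; this last identification, tracking the factor $2s(s+1)/(n+s+1)$ through the substitution, is where I expect the main risk of error.
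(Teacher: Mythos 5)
Your proposal is correct and follows essentially the same route as the paper's own proof: differentiate \eqref{poly-ber-st}, split off the boundary term $n/2$, substitute the inversion formula of Theorem \ref{inv-poly}, evaluate the constant contribution by Lemma \ref{part-1} and the coefficient $(-1)^{k}u(n,k)$ of $B_{k}^{*}(x)$ by Lemma \ref{part-2}, then assemble by parity. The identification you flag as the main risk---matching the interchanged inner sum, with its factor $2s(s+1)/(n+s+1)$, against \eqref{def-unk}---does in fact hold term for term, so the argument goes through exactly as you outline it.
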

\begin{proof}
Differentiating \eqref{poly-ber-st} and using \eqref{der-berpoly0} gives 
\begin{eqnarray*}
\frac{d}{dx} B_{n}^{*}(x) 
& = &  \sum_{r=0}^{n-1} \binom{n+r+1}{2r+2} \frac{r+1}{n+r+1}
B_{r}(x) \vm \\
& = &  \frac{n}{2} + \sum_{r=1}^{n-1} \binom{n+r+1}{2r+2} \frac{r+1}{n+r+1}
B_{r}(x) \vm
\end{eqnarray*}
\noindent
for $n \geq 1$. The sum above (without the term $n/2$) is 
transformed using Theorem \ref{inv-poly} 
to produce 
\begin{multline}
2 \sum_{r=0}^{n-1} (-1)^{r} \binom{2r-1}{r} 
\binom{n+r+1}{2r+2} \frac{r+1}{n+r+1}  +  \vm \\
2 \sum_{r=1}^{n-1} 
\frac{r(r+1)}{n+r+1}  \binom{n+r+1}{2r+2} 
\sum_{k=1}^{r} (-1)^{k+r} 
\left[ \binom{2r-1}{r-k} - \binom{2r-1}{r-k-1} \right] B_{k}^{*}(x).
\end{multline}
\noindent
Denote the first sum by $S_{1}$ and the second one by $S_{2}$. 

\smallskip

Lemma \ref{part-1} shows that $S_{1} = - \lf \frac{n}{2} \rf$. To 
evaluate $S_{2}$, reverse  the order of summation to obtain
\begin{equation}
S_{2} = \sum_{k=1}^{n-1} (-1)^{k}u(n,k)B_{k}^{*}(x),
\end{equation}
\noindent
with $u(n,k)$ defined in \eqref{def-unk}. This gives 
\begin{equation}
\frac{d}{dx} B_{n}^{*}(x) = \frac{n}{2} - \lf \frac{n}{2} \rf + 
\sum_{k=1}^{n-1} (-1)^{k} u(n,k) B_{k}^{*}(x),
\vm
\end{equation}
\noindent
and the proof now follows from the values of $u(n,k)$ given in 
Lemma \ref{part-2}.
\end{proof}

\section{Some basics on Chebyshev polynomials}
\label{sec-chebyshev} 
\setcounter{equation}{0}

This section contains some basic information about the Chebyshev polynomials 
of first and second kind, denoted 
by $T_{n}(x)$ and $U_{n}(x)$, respectively. These properties will be used 
to establish some results on Zagier polynomials and the relation between these 
two families of polynomials will be clarified in 
Section \ref{sec-representation}.

The Chebsyhev polynomials of the first kind $T_{n}$ are defined by 
\begin{equation}
T_{n}(\cos \theta) = \cos n \theta, \quad n \in \mathbb{N} \cup \{ 0 \}
\label{form-tn1}
\end{equation}
\noindent
and the companion Chebyshev polynomial of the second kind $U_{n}$ by 
\begin{equation}
U_{n} ( \cos \theta)  = \frac{\sin( (n+1) \theta)}{\sin \theta}.
\label{form-un1}
\end{equation}
\noindent
Their generating functions are given by 
\begin{equation}
\sum_{n=0}^{\infty} T_{n}(x) t^{n} = \frac{1 - xt}{1- 2 xt + t^{2}}
\label{gf-tn}
\end{equation}
\noindent
and 
\begin{equation}
\sum_{n=0}^{\infty} U_{n}(x) t^{n} = \frac{1}{1- 2 xt + t^{2}}.
\label{gf-un}
\end{equation}
\noindent
The even part of this series, given by
\begin{equation}
\sum_{n=0}^{\infty} U_{2n}(x) t^{2n} = 
\frac{1+t^{2}}{1 + 2(1-2x^{2})t^{2} + t^{4}},
\label{gf-uneven}
\end{equation}
\noindent
will be used in Section \ref{mod-ber-poly-reflection}. Many 
properties of these families may be found in \cite{olver-2010a}. The 
formulas for the generating functions also appear in 
\cite[$22:3:8$, page 199]{spanier-1987a}. 

\smallskip

Differentiation of the generating function for $T_{n}(x)$ gives the 
basic identity 
\begin{equation}
\frac{d}{dx} T_{n}(x) = n U_{n-1}(x).
\label{der-tn}
\vm
\end{equation}
\noindent
The expression 
\begin{equation}
U_{n}(x) = \frac{( x + \sqrt{x^{2}-1} )^{n+1} - ( x - \sqrt{x^{2}-1})^{n+1}}
{2 \sqrt{x^{2}-1}},
\label{form-utwon}
\vm
\end{equation}
\noindent 
will be used in the arguments presented below. 

\smallskip

The Chebyshev polynomials have a hypergeometric representation in the form
\begin{eqnarray}
T_{n}(x) & = & {_{2}F_{1}} 
\left(-n,n; \tfrac{1}{2}; \tfrac{1-x}{2} \right) \label{hyper-T} \vm \\
U_{n}(x) & = & (n+1) \, {_{2}F_{1}} 
\left(-n,n+2; \tfrac{3}{2}; \tfrac{1-x}{2} \right). \vm
\nonumber
\end{eqnarray}
\noindent
These appear in \cite[p. 394, equation (15.9.5) and(15.9.6)]{olver-2010a}. 

\smallskip

The next statement is an expression of the Chebyshev polynomial $T_{n}$
that will be used to establish, in Theorem \ref{thm-symm}, a 
symmetry property of the Zagier polynomials.

\begin{lemma}
\label{cheby-12}
The Chebyshev polynomial $T_{n}(x)$ satisfies
\begin{equation}
\sum_{r=0}^{n} \binom{n+r}{2r} \frac{x^{r}}{n+r} = 
\frac{1}{n} T_{n} \left( \frac{x}{2} + 1 \right).
\vm
\label{cheby-11}
\end{equation}
\noindent
\end{lemma}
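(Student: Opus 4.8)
The plan is to read both sides of \eqref{cheby-11} as polynomials in $x$ and match coefficients, using the hypergeometric representation \eqref{hyper-T} of $T_{n}$ that was supplied just above. First I would substitute the argument $x/2+1$ into \eqref{hyper-T}, so that $\tfrac{1-(x/2+1)}{2} = -\tfrac{x}{4}$, giving
\begin{equation*}
T_{n}\left(\frac{x}{2}+1\right) = \sum_{r=0}^{\infty} \frac{(-n)_{r}\,(n)_{r}}{(\tfrac{1}{2})_{r}\,r!}\left(-\frac{x}{4}\right)^{r}.
\end{equation*}
Since $(-n)_{r}=0$ for $r>n$, the series terminates at $r=n$, which already matches the finite range of the sum in \eqref{cheby-11}. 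It then remains only to compare the coefficient of $x^{r}$ on the two sides.

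Next I would rewrite each Pochhammer symbol in terms of factorials, using $(-n)_{r} = (-1)^{r}\,n!/(n-r)!$, $(n)_{r} = (n+r-1)!/(n-1)!$, and $(\tfrac{1}{2})_{r} = (2r)!/(4^{r}\,r!)$. Substituting these and cancelling the factors $(-1)^{r}$ and $4^{-r}$ against the $(-x/4)^{r}$, the coefficient of $x^{r}$ in $T_{n}(x/2+1)$ collapses to $\tfrac{n\,(n+r-1)!}{(2r)!\,(n-r)!}$. On the other hand, the coefficient of $x^{r}$ in $n$ times the left-hand side of \eqref{cheby-11} is $n\binom{n+r}{2r}\tfrac{1}{n+r} = \tfrac{n\,(n+r-1)!}{(2r)!\,(n-r)!}$, the same expression. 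Dividing by $n$ then yields \eqref{cheby-11} for $n\geq 1$.

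An alternative route, which mirrors the computation in the proof of Theorem \ref{gf-bstar}, is to introduce the generating function $\sum_{n=1}^{\infty} z^{n}\sum_{r=0}^{n}\binom{n+r}{2r}\tfrac{x^{r}}{n+r}$, interchange the order of summation, and reuse the inner-sum evaluation established there (with $x^{r}$ in place of $B_{r}$) to obtain the closed form $-\tfrac{1}{2}\log(1-(2+x)z+z^{2})$; comparison with the logarithmic generating function $\sum_{n=1}^{\infty}\tfrac{T_{n}(t)}{n}z^{n} = -\tfrac{1}{2}\log(1-2tz+z^{2})$ at $t=x/2+1$ then gives the claim. Either way the argument is essentially mechanical: the only point demanding care is the bookkeeping in the Pochhammer-to-factorial conversions, in particular the identity $(\tfrac{1}{2})_{r} = (2r)!/(4^{r}r!)$ and the matching of the signs and powers of $4$, so I expect no genuine obstacle beyond this routine simplification.
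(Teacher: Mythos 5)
Your proposal is correct, and your primary argument---substituting $x/2+1$ into the hypergeometric representation \eqref{hyper-T} so that the argument becomes $-\tfrac{x}{4}$, converting $(-n)_{r}$, $(n)_{r}$, and $(\tfrac{1}{2})_{r}=(2r)!/(4^{r}r!)$ to factorials, and matching the coefficient of $x^{r}$ with $n\binom{n+r}{2r}\tfrac{1}{n+r}=\tfrac{n\,(n+r-1)!}{(2r)!\,(n-r)!}$---is precisely the paper's own proof. The generating-function alternative you sketch would also work, but it is unnecessary here and the paper does not use it.
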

\begin{proof}
The representation \eqref{hyper-T} and 
$\left( \tfrac{1}{2} \right)_{r} = 2^{-2r}(2r)!/r!$ yield
\begin{eqnarray*}
T_{n} \left( \frac{x}{2} + 1 \right) & = & 
 {_{2}F_{1}} \left( -n, \, n; \, \tfrac{1}{2}, \, - \tfrac{x}{4} 
\right) \vm \\
& = &  \sum_{r=0}^{n} \frac{(-n)_{r} (n)_{r} }{( \tfrac{1}{2} 
)_{r}} \frac{(-x/4)^{r}}{r!} \vm \\
& = &  \sum_{r=0}^{n} \{ n(n-1) \cdots (n-(r-1)) \} \, 
\{ n(n+1) \cdots (n+r-1) \} \,  \frac{x^{r}}{(2r)!} \vm \\
& = & n \sum_{r=0}^{n} \frac{(n-r)! \, \{ (n-r+1)(n-r+2) \cdots (n+r-1) \}}
{(2r)! \, (n-r)!} x^{r} \\
& = & n \sum_{r=0}^{n} \frac{(n+r-1)!}{(2r)! \, (n-r)!} x^{r}. \vm
\end{eqnarray*}
\noindent
This verifies the claim. 
\end{proof}

\begin{lemma}
\label{cheby-umbral}
The Zagier polynomial $B_{n}^{*}(x)$ is related to the  Chebyshev polynomial 
$T_{n}(x)$ via 
\begin{equation*}
B_{n}^{*}(x)  =  {\rm{eval}} \left\{ \frac{1}{n} T_{n} \left( \frac{\B(x)}{2} 
+ 1 \right) \right \}  
  =  {\rm{eval}} \left\{ \frac{1}{n} T_{n} \left( \frac{\B + x + 2}{2} 
\right) \right \}. 
\end{equation*}
\end{lemma}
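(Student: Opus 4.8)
The plan is to read off the right-hand side as the image under ${\rm{eval}}$ of the polynomial identity in Lemma~\ref{cheby-12}, with the Bernoulli umbra $\mathfrak{B}(x)$ substituted for the formal variable. First I would rewrite the definition \eqref{poly-ber-st} in umbral form: since ${\rm{eval}}\{\mathfrak{B}(x)^{r}\} = B_{r}(x)$ and ${\rm{eval}}$ is linear,
\begin{equation*}
B_{n}^{*}(x) = {\rm{eval}} \left\{ \sum_{r=0}^{n} \binom{n+r}{2r} \frac{\mathfrak{B}(x)^{r}}{n+r} \right\}.
\end{equation*}

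Next I would apply Lemma~\ref{cheby-12}, regarded as an identity of polynomials in a single indeterminate: for all $y$,
\begin{equation*}
\sum_{r=0}^{n} \binom{n+r}{2r} \frac{y^{r}}{n+r} = \frac{1}{n} T_{n}\left( \frac{y}{2} + 1 \right).
\end{equation*}
Both sides are polynomials of degree $n$, hence agree coefficient by coefficient. Substituting the powers $y^{r} = \mathfrak{B}(x)^{r}$ and applying ${\rm{eval}}$ termwise (justified by linearity) transforms the umbral sum above into ${\rm{eval}}\{\tfrac{1}{n} T_{n}(\tfrac{\mathfrak{B}(x)}{2} + 1)\}$, which is the first claimed equality.

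For the second equality I would invoke the umbral translation $\mathfrak{B}(x) \equiv x + \mathfrak{B}$. Lemma~\ref{cool-sum} records this at the level of first powers; the general statement ${\rm{eval}}\{P(\mathfrak{B}(x))\} = {\rm{eval}}\{P(x+\mathfrak{B})\}$ for any polynomial $P$ follows from comparing the generating functions \eqref{umbra-berpoly} and \eqref{bn-umbrae1}, since ${\rm{eval}}\{\exp(t\mathfrak{B}(x))\} = e^{xt}\,{\rm{eval}}\{\exp(t\mathfrak{B})\}$ forces all moments to match ($B_{n}(x) = {\rm{eval}}\{(x+\mathfrak{B})^{n}\}$ being the classical binomial expansion of the Bernoulli polynomial). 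Taking $P(y) = \tfrac{1}{n} T_{n}(\tfrac{y}{2}+1)$ and simplifying $\tfrac{x+\mathfrak{B}}{2}+1 = \tfrac{\mathfrak{B}+x+2}{2}$ then yields the second form.

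The whole argument is essentially immediate once Lemma~\ref{cheby-12} is available; the one point deserving care is the legitimacy of transporting a polynomial identity in an ordinary variable inside ${\rm{eval}}$ with the umbra in place of that variable. This is harmless here because ${\rm{eval}}$ acts linearly on the monomials $\mathfrak{B}(x)^{r}$ and both sides of Lemma~\ref{cheby-12} are genuine finite polynomials, so the transport reduces to matching finitely many coefficients.
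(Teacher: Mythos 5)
Your proposal is correct and follows the same route as the paper, which disposes of the lemma in one line as ``simply the umbral version of \eqref{cheby-11}'': substitute the umbra into Lemma \ref{cheby-12} using linearity of ${\rm{eval}}$, then apply the translation $\mathfrak{B}(x) \equiv x + \mathfrak{B}$ from Lemma \ref{cool-sum}. Your added care about transporting the polynomial identity inside ${\rm{eval}}$ and about extending \eqref{umbra-rel1} to all powers is exactly the justification the paper leaves implicit.
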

\begin{proof}
This is simply the umbral version of \eqref{cheby-11}.
\end{proof}

\begin{lemma}
\label{modi-ver1}
The Zagier polynomial $B_{n}^{*}(x)$ is given by 
\begin{equation}
B_{n}^{*}(x) = \frac{1}{n} \mathbb{E} \left[ T_{n} \left( \frac{x}{2} + 
\frac{1}{2} iL_{B} + \frac{3}{4} \right) \right].
\end{equation}
\end{lemma}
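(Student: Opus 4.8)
The plan is to combine the two results immediately preceding the statement: the umbral representation in Lemma \ref{cheby-umbral} and the probabilistic realization of the \textit{eval} operator in Theorem \ref{density-B}. First I would read off from Lemma \ref{cheby-umbral} that
\begin{equation*}
B_{n}^{*}(x) = {\rm{eval}} \left\{ h(\mathfrak{B}(x)) \right\}, \qquad
h(y) = \frac{1}{n} T_{n} \left( \frac{y}{2} + 1 \right),
\end{equation*}
so that the entire content of the lemma is packaged into a single function $h$ of the Bernoulli umbra $\mathfrak{B}(x)$.

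Next I would apply Theorem \ref{density-B} to this $h$. Since $T_{n}$ is a polynomial, $h$ is itself a polynomial, hence an admissible function: the density $f_{L_{B}}(u) = \tfrac{\pi}{2}\,{\rm sech}^{2}(\pi u)$ in \eqref{form-lb} decays exponentially, so all moments of $L_{B}$ are finite and the integral \eqref{int-expec} converges. Theorem \ref{density-B} then replaces the umbra $\mathfrak{B}(x)$ by the random variable $x - \tfrac{1}{2} + i L_{B}$ inside the expectation, giving
\begin{equation*}
B_{n}^{*}(x) = \mathbb{E}\left[ h\left( x - \tfrac{1}{2} + i L_{B} \right) \right]
= \frac{1}{n}\, \mathbb{E}\left[ T_{n}\!\left( \frac{x - \tfrac{1}{2} + i L_{B}}{2} + 1 \right) \right].
\end{equation*}

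Finally I would simplify the argument of $T_{n}$ by the elementary computation
\begin{equation*}
\frac{x - \tfrac{1}{2} + i L_{B}}{2} + 1 = \frac{x}{2} + \frac{1}{2} i L_{B} + \frac{3}{4},
\end{equation*}
which reproduces exactly the form claimed in the statement. I do not expect a genuine obstacle here: the result is a direct substitution of Theorem \ref{density-B} into the umbral identity of Lemma \ref{cheby-umbral}, and the only point meriting a remark is the admissibility of the polynomial $h$, which is automatic from the exponential decay of $f_{L_{B}}$.
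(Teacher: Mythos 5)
Your proposal is correct and follows exactly the paper's own route: the paper proves this lemma in one line by citing Lemma \ref{cheby-umbral}, Theorem \ref{density-B}, and the umbral rule \eqref{umbra-rel1}, which is precisely the substitution you carry out, with the same simplification $\frac{1}{2}\left(x - \tfrac{1}{2} + iL_{B}\right) + 1 = \frac{x}{2} + \frac{1}{2}iL_{B} + \frac{3}{4}$. Your remark on admissibility of the polynomial $h$ via the exponential decay of $f_{L_{B}}$ is a small, sound addition that the paper leaves implicit.
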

\begin{proof}
The result now follows from Lemma \ref{cheby-umbral}, 
Theorem \ref{density-B} and the umbral rule \eqref{umbra-rel1}.
\end{proof}

The result of Lemma \ref{modi-ver1} is now used to give an umbral proof of 
Theorem \ref{der-mod-zag}. 

\begin{proof}
The computation is simpler with $\tilde{B}_{n}(x) = B_{n}^{*} \left( x - 
\tfrac{3}{2} \right)$. Differentiate 
the statement of Lemma \ref{modi-ver1} to obtain
\begin{equation}
\frac{d}{dx}\tilde{B}_{n}(x)  =  
\frac{1}{2n} \mathbb{E} \left[ T_{n}' \left(  \frac{x+ iL_{B}}{2} \right) 
\right] 
 =  \frac{1}{2} \mathbb{E} \left[ U_{n-1} \left(  \frac{x+ iL_{B}}{2} \right) 
\right].
\end{equation}
\noindent
In the case of even degree, this gives 
\begin{equation}
\frac{d}{dx}\tilde{B}_{2n}(x)  =  
\frac{1}{2} \mathbb{E} \left[ U_{2n-1} \left(  \frac{x+ iL_{B}}{2} \right) 
\right],
\end{equation}
\noindent
and using the identity 
\begin{equation}
U_{2n-1}(x) = 2 \sum_{k=1}^{n} T_{2k-1}(x)
\end{equation}
\noindent
that is entry $18.18.33$ in \cite{olver-2010a}, it follows that 
\begin{equation}
\frac{d}{dx}\tilde{B}_{2n}(x)   =  
\mathbb{E} \left[ \sum_{k=1}^{n} T_{2k-1} 
\left(  \frac{x+ iL_{B}}{2} \right) 
\right]  = 
\sum_{k=1}^{n} (2k-1) \tilde{B}_{2k-1}(x),
\end{equation}
\noindent
as claimed.  The same argument works for $n$ odd using
\cite[18.18.32]{olver-2010a}:
\begin{equation}
U_{2n}(x) = 2 \sum_{k=0}^{n} T_{2k}(x) - 1.
\end{equation}
\end{proof}

\section{The Zagier-Chebyshev connection}
\label{sec-representation} 
\setcounter{equation}{0}

In \cite{zagier-1998a}, after the proof of the identity 
\begin{equation}
2 B_{2n}^{*} = \left( \frac{-3}{n} \right) + 
\sum_{r=0}^{n} (-1)^{n+r} \binom{n+r}{2r} \frac{B_{2r}}{n+r},
\label{zagier-88}
\end{equation}
\noindent
the author remarks that the second term has a \textit{pleasing similarity} to 
the equation \eqref{zagier-mod}. This 
section contains a representation of the Zagier polynomials 
$B_{n}^{*}(x)$ in terms of the Chebyshev polynomials of the 
second kind $U_{n}(x)$. The expressions contain terms that also have
pleasing similarity to the definition of Zagier polynomials. 
The results are naturally divided according to the parity of $n$.

\begin{theorem}
\label{thm-zagier-even}
The Zagier polynomials are given by
\begin{equation}
2B_{2n}^{*}(x) = \sum_{r=0}^{n} (-1)^{n+r} \binom{n+r}{2r} 
\frac{B_{2r}(x)}{n+r} + U_{2n-1} \left( \frac{x}{2} \right) +
U_{2n-1} \left( \frac{x+1}{2} \right)
\label{zag-pol1}
\vm
\end{equation}
\noindent
and 
\begin{equation}
2B_{2n+1}^{*}(x) = \sum_{r=0}^{n} (-1)^{n+r} \binom{n+r+1}{2r+1} 
\frac{B_{2r+1}(x)}{n+r+1} + U_{2n} \left( \frac{x}{2} \right) +
U_{2n} \left( \frac{x+1}{2} \right).
\label{zag-pol2}
\vm
\end{equation}
\end{theorem}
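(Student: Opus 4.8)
The plan is to deduce both identities uniformly from the umbral--Chebyshev representation of the Zagier polynomials together with the operational shift rule of Lemma~\ref{taylor}. Writing $N=2n$ for \eqref{zag-pol1} and $N=2n+1$ for \eqref{zag-pol2}, Lemma~\ref{cheby-umbral} gives
\[
N B_{N}^{*}(x) = {\rm{eval}} \left\{ T_{N} \left( \frac{\B(x)}{2} + 1 \right) \right\}.
\]
The guiding observation is that the alternating-parity sums appearing as the first terms on the right of \eqref{zag-pol1} and \eqref{zag-pol2} are nothing but the \emph{same} Chebyshev expression evaluated at $\B(x)/2$ \emph{without} the shift by $1$, while the two copies of $U_{N-1}$ are produced precisely by restoring that shift.

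First I would identify the shift-free quantity. Inserting the explicit power-series representation $T_{N}(y) = \tfrac{N}{2} \sum_{k=0}^{\lf N/2 \rf} \tfrac{(-1)^{k}}{N-k} \binom{N-k}{k} (2y)^{N-2k}$ (see \cite{olver-2010a}) with $y = \B(x)/2$, so that $(2y)^{N-2k} = \B(x)^{N-2k}$, and applying ${\rm{eval}}\{ \B(x)^{m} \} = B_{m}(x)$ gives
\[
\frac{2}{N}\, {\rm{eval}} \left\{ T_{N} \left( \frac{\B(x)}{2} \right) \right\} = \sum_{k=0}^{\lf N/2 \rf} \frac{(-1)^{k}}{N-k} \binom{N-k}{k} B_{N-2k}(x).
\]
Reindexing by $r = \lf N/2 \rf - k$ and using $\binom{N-k}{k} = \binom{n+r}{2r}$ when $N=2n$ and $\binom{N-k}{k} = \binom{n+r+1}{2r+1}$ when $N = 2n+1$, together with $(-1)^{k} = (-1)^{n+r}$, converts the right-hand side into exactly the alternating sums $\sum_{r}(-1)^{n+r}\binom{n+r}{2r} B_{2r}(x)/(n+r)$ and $\sum_{r}(-1)^{n+r}\binom{n+r+1}{2r+1} B_{2r+1}(x)/(n+r+1)$, respectively.

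It remains to relate the shifted evaluation to the shift-free one. Setting $P(u) = T_{N}(u/2)$, two applications of Lemma~\ref{taylor} (first with parameter $x$, then with $x+1$, accounting for the fact that $\B(x)/2+1$ and $\B(x)/2$ differ by a unit shift in $\B(x)/2$, i.e. by $2$ in $\B+x$) yield
\[
{\rm{eval}} \left\{ T_{N} \left( \frac{\B(x)}{2} + 1 \right) \right\} = {\rm{eval}} \left\{ T_{N} \left( \frac{\B(x)}{2} \right) \right\} + P'(x) + P'(x+1).
\]
Since $P'(u) = \tfrac12 T_{N}'(u/2) = \tfrac{N}{2} U_{N-1}(u/2)$ by the derivative rule \eqref{der-tn}, the left-hand side is $N B_{N}^{*}(x)$; multiplying by $2/N$ and substituting the identification of the shift-free term from the previous step produces
\[
2 B_{N}^{*}(x) = \frac{2}{N}\, {\rm{eval}} \left\{ T_{N} \left( \frac{\B(x)}{2} \right) \right\} + U_{N-1} \left( \frac{x}{2} \right) + U_{N-1} \left( \frac{x+1}{2} \right),
\]
which is \eqref{zag-pol1} for $N=2n$ and \eqref{zag-pol2} for $N=2n+1$. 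The only real work is the combinatorial bookkeeping of the middle step --- matching $\binom{N-k}{k}$ against the two stated binomials and tracking the signs --- together with the care needed to apply the shift rule twice. An alternative route for the even case avoids the explicit coefficient formula by combining Lemma~\ref{cheby-11} in umbral form (with $\B(x)^{2}$ in place of the variable) with the doubling relation $T_{2n}(z) = T_{n}(2z^{2}-1)$, but the power-series expansion is what makes both parities fall out at once.
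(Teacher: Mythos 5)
Your proposal is correct, and it takes a genuinely different route from the paper's. The paper proves \eqref{zag-pol1} analytically: it forms the even part of the generating function of Theorem \ref{gf-bstar-poly}, uses $\psi(t+1)=\psi(t)+1/t$ to split off rational terms, expands the remaining piece umbrally into a series in $B_{2r}(x+4)$, identifies the rational terms as generating functions of Chebyshev $U$'s via \eqref{gf-un}, and only then reaches the stated identity through the substitution $x \mapsto -x-3$, which requires Theorem \ref{thm-symm} together with $B_{2n}(1-x)=B_{2n}(x)$; the odd case \eqref{zag-pol2} is left as ``similar''. Your argument is instead purely finite and umbral: Lemma \ref{cheby-umbral}, the classical expansion $T_{N}(y)=\tfrac{N}{2}\sum_{k=0}^{\lf N/2 \rf}\tfrac{(-1)^{k}}{N-k}\binom{N-k}{k}(2y)^{N-2k}$ evaluated at $y=\B(x)/2$, and two applications of Lemma \ref{taylor} combined with \eqref{der-tn}; the double shift is legitimate because $T_{N}\left(\B(x)/2+1\right)=P(\B+x+2)$ with $P(u)=T_{N}(u/2)$ and Lemma \ref{taylor} applies to arbitrary polynomials. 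I verified the bookkeeping: with $k=n-r$ one has $\binom{N-k}{k}=\binom{n+r}{2r}$ for $N=2n$ and $\binom{n+r+1}{2r+1}$ for $N=2n+1$, with $1/(N-k)$ matching $1/(n+r)$ resp.\ $1/(n+r+1)$ and $(-1)^{k}=(-1)^{n+r}$, so both displayed identities drop out in one stroke. Your route buys three things: it treats both parities uniformly (the paper writes out only the even case), it needs no generating-function or digamma machinery, and it avoids Theorem \ref{thm-symm} altogether --- a genuine structural improvement, since the paper's own Note records that its third proof of Theorem \ref{thm-symm} rests on Corollary \ref{coro-twoder}, itself a special case of \eqref{zag-pol2}, so the paper must lean on the independent generating-function proof of the symmetry to keep \eqref{zag-pol1} non-circular, whereas your proof dissolves that dependency entirely. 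What the paper's route buys in exchange is the intermediate identity in $B_{2r}(x+4)$ and its integration into the generating-function framework reused throughout Sections \ref{gf-zagierpoly} and \ref{sec-periodic}. The only cosmetic slip in your write-up is the phrase ``first with parameter $x$, then with $x+1$'': the shift rule peels off $P'(x+1)$ first and then $P'(x)$, but since the two contributions simply add, nothing is affected.
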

\begin{proof}
The proof is presented for the even degree case, the proof is similar for 
odd degree.

Theorem \ref{gf-bstar-poly} gives the generating function for $B_{n}^{*}(x)$.
Its even part yields 
\begin{equation*}
2 \sum_{n=1}^{\infty} B_{2n}^{*}(x)z^{2n} = 
- \frac{1}{2} \log z - \frac{1}{2} \psi( 1/z +z -x-1) 
- \frac{1}{2} \log (-z) - \frac{1}{2} \psi( -1/z -z -x-1).
\end{equation*}
\noindent
The functional equation $\psi(t+1) = \psi(t) + 1/t$ gives 
\begin{multline}
\label{nice-22}
2 \sum_{n=1}^{\infty} B_{2n}^{*}(x) z^{2n}  =   H(x,z)  + H(x,-z) \\
\qquad \qquad  +   \frac{1}{2} \left( 
\frac{1}{1/z + z + x + 3 } 
+ \frac{1}{1/z + z + x + 2 } 
- \frac{1}{1/z + z - x - 3 } 
- \frac{1}{1/z + z - x - 2 } \right)
\end{multline}
\noindent
with 
\begin{equation}
H(x,z) = - \frac{1}{2} \left(  \log z + \psi(1/z + z - x  - 3 ) \right).
\end{equation}
\noindent
The umbral method and Theorem \ref{umbra-psi} give 
\begin{eqnarray*}
2H(x,z) & = & - \log z - {\rm{eval}} \left( \log( 1/z +z -x -3 + \B ) \right) \\
 & = & - \log z - {\rm{eval}} \left( \log( 1/z +z -x -4 - \B ) \right) \\
 & = &  - {\rm{eval}} \left( \log( 1+ z^{2} - 4z - zx - x \B ) \right) \\
 & = &  - \log( 1+ z^{2}) - 
{\rm{eval}} \left( \log( 1 - \frac{z \B(x+4)}{1+z^{2}} ) \right) \\
 & = &  - \log( 1+ z^{2}) +
{\rm{eval}} \left( \sum_{r=1}^{\infty} \frac{ (z \B(x+4) )^{r}}{r(1+z^{2})^{r}}
 \right) \\
& = & - \log(1+z^{2}) + \sum_{r=1}^{\infty} 
\frac{B_{r}(x+4) z^{r}}{r(1+z^{2})^{r}}. 
\end{eqnarray*}
\noindent
Therefore
\begin{equation*}
H(x,z) + H(x,-z) = - \log(1+z^{2}) 
+ \sum_{r=1}^{\infty} \frac{B_{2r}(x+4) \, z^{2r}}{2r(1+z^{2})^{2r}}.
\end{equation*}

\noindent
Now observe that 
\begin{eqnarray*}
\sum_{r=1}^{\infty} \frac{B_{2r}(x+4) \, z^{2r}}{2r(1+z^{2})^{2r}} 
 & = & \sum_{r=1}^{\infty} \frac{z^{2r} B_{2r}(x+4)}{2r} 
\sum_{n=0}^{\infty} \frac{(2r)_{n} (-z^{2})^{n}}{n!}  \\
& = & \sum_{r=1}^{\infty} \frac{(-1)^{r}B_{2r}(x+4)}{2r} 
\sum_{n=r}^{\infty} (-1)^{n} \binom{n+r-1}{2r-1} z^{2n} \\
& = & \sum_{n=1}^{\infty} \left( \sum_{r=1}^{n} (-1)^{n+r} 
\binom{n+r}{2r} \frac{B_{2r}(x+4)}{n+r} \right)z^{2n}.
\end{eqnarray*} 
\noindent
This produces
\begin{equation*}
\sum_{r=1}^{\infty} \frac{B_{2r}(x+4) \, z^{2r}}{2r(1+z^{2})^{2r}} 
- \log(1+z^{2})  =  
\sum_{n=1}^{\infty} \left( \sum_{r=0}^{n} (-1)^{n+r} 
\binom{n+r}{2r} \frac{B_{2r}(x+4)}{n+r} \right)z^{2n}.
\end{equation*}

The equation \eqref{nice-22} now gives 
\begin{multline}
\label{nice-23}
2 \sum_{n=1}^{\infty} B_{2n}^{*}(x) z^{2n} = 
\sum_{n=1}^{\infty} \left( \sum_{r=0}^{n} (-1)^{n+r} \binom{n+r}{2r} 
\frac{B_{2r}(x+4)}{n+r} \right) z^{2n} \\
\qquad \qquad  +   \frac{1}{2} \left( 
\frac{1}{1/z + z + x + 3 } 
+ \frac{1}{1/z + z + x + 2 } 
- \frac{1}{1/z + z - x - 3 } 
- \frac{1}{1/z + z - x - 2 } \right).
\end{multline}

The generating function for $U_{n-1}(x)$, given in \eqref{gf-un}, is 
written as 
\begin{equation}
\sum_{n=1}^{\infty} U_{n-1}(x)z^{n} = \frac{1}{1/z + z - 2x}
\end{equation}
\noindent
and the rational function appearing in \eqref{nice-23} is expressed as 
\begin{equation*}
\sum_{n=1}^{\infty} \left( 
U_{n-1} \left( \frac{-x-3}{2} \right) 
+U_{n-1} \left( \frac{-x-2}{2} \right) 
-U_{n-1} \left( \frac{x+3}{2} \right) 
-U_{n-1} \left( \frac{x+2}{2} \right)  \right) z^{n}.
\end{equation*}
\noindent
Using the fact that $U_{n}(x)$ has the same parity as $n$ it simplifies to
\begin{equation*}
2 \sum_{n=1}^{\infty} \left( 
U_{2n-1} \left( \frac{-x-3}{2} \right) 
+U_{2n-1} \left( \frac{-x-2}{2} \right) \right) z^{2n}.
\end{equation*}
\noindent
Therefore
\begin{equation*}
2B_{2n}^{*}(x) = \sum_{r=0}^{n} (-1)^{n+r} \binom{n+r}{2r} 
\frac{B_{2r}(x+4)}{n+r} + 
U_{2n-1} \left( \frac{-x-3}{2} \right) 
+U_{2n-1} \left( \frac{-x-2}{2} \right).
\end{equation*}
\noindent
Finally, replace $x$ by $-x-3$,  use Theorem \ref{thm-symm} and the 
symmetry of Bernoulli polynomials $B_{2n}(1-x)  = B_{2n}(x)$, to obtain the 
result.
\end{proof}

The next result gives a representation for the difference of two 
Zagier polynomials in 
terms of the Chebyshev polynomials $U_{n}(x)$.

\begin{lemma}
The Zagier polynomials satisfy 
\begin{equation}
B_{n}^{*}(x+1) = B_{n}^{*}(x) + \frac{1}{2}U_{n-1} \left( \frac{x}{2} + 1 
\right).
\label{nice-reader}
\end{equation}
\noindent
It can be extended to 
\begin{equation}
B_{n}^{*}(x) - B_{n}^{*}(x-k) = \frac{1}{2} \sum_{j=1}^{k} 
U_{n-1} \left( \frac{x-j}{2} + 1 \right).
\label{nice-reader2}
\end{equation}
\end{lemma}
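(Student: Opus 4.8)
The plan is to prove the single-step relation \eqref{nice-reader} first and then deduce \eqref{nice-reader2} by telescoping. For the single step I would sidestep the umbral apparatus and instead simply differentiate the Chebyshev identity already recorded in Lemma \ref{cheby-12}.

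First I would subtract the defining series \eqref{poly-ber-st} evaluated at $x+1$ and at $x$, term by term:
\[
B_{n}^{*}(x+1) - B_{n}^{*}(x) = \sum_{r=0}^{n} \binom{n+r}{2r} \frac{B_{r}(x+1) - B_{r}(x)}{n+r}.
\]
The classical difference relation $B_{r}(x+1) - B_{r}(x) = r\,x^{r-1}$, which follows at once from the generating function \eqref{gf-berpoly} (and which annihilates the $r=0$ term), reduces this to
\[
B_{n}^{*}(x+1) - B_{n}^{*}(x) = \sum_{r=1}^{n} \binom{n+r}{2r} \frac{r\,x^{r-1}}{n+r}.
\]

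The crux is to recognize the right-hand side as $\frac{d}{dx}$ of the left-hand side of \eqref{cheby-11}. Differentiating the identity of Lemma \ref{cheby-12} in $x$ gives
\[
\sum_{r=1}^{n} \binom{n+r}{2r} \frac{r\,x^{r-1}}{n+r} = \frac{1}{2n}\,T_{n}'\!\left( \frac{x}{2}+1 \right),
\]
and the Chebyshev derivative rule \eqref{der-tn}, namely $T_{n}'(u) = n\,U_{n-1}(u)$, collapses the right-hand side to $\tfrac{1}{2}U_{n-1}\!\left(\tfrac{x}{2}+1\right)$; this is exactly \eqref{nice-reader}. For \eqref{nice-reader2} I would apply \eqref{nice-reader} with $x$ replaced by $x-j$, that is $B_{n}^{*}(x-j+1) - B_{n}^{*}(x-j) = \tfrac{1}{2}U_{n-1}\!\left(\tfrac{x-j}{2}+1\right)$, and sum over $j = 1,\dots,k$; the left-hand side telescopes to $B_{n}^{*}(x) - B_{n}^{*}(x-k)$, giving the claim.

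There is essentially no obstacle here: once the term-by-term difference is identified with the derivative of the already-proven Lemma \ref{cheby-12}, the rest is forced by \eqref{der-tn}, and the telescoping for \eqref{nice-reader2} is immediate. As an alternative derivation of \eqref{nice-reader}, one may instead invoke the umbral representation $B_{n}^{*}(x) = {\rm{eval}}\{P(\B+x)\}$ of Lemma \ref{cheby-umbral}, with $P(t) = \tfrac{1}{n}T_{n}\!\left(\tfrac{t+2}{2}\right)$; then Lemma \ref{taylor} yields $B_{n}^{*}(x+1) - B_{n}^{*}(x) = P'(x)$, and $P'(x) = \tfrac{1}{2}U_{n-1}\!\left(\tfrac{x}{2}+1\right)$ once more by \eqref{der-tn}.
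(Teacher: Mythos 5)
Your proof is correct, but your primary argument follows a genuinely different route from the paper's. The paper proves \eqref{nice-reader} in one line by combining the umbral Taylor rule of Lemma \ref{taylor} with the umbral Chebyshev representation of Lemma \ref{cheby-umbral} (take $P(t) = \tfrac{1}{n}T_{n}\bigl(\tfrac{t+2}{2}\bigr)$, so that $B_{n}^{*}(x) = {\rm{eval}}\{P(x+\B)\}$ and $P'(x) = \tfrac{1}{2}U_{n-1}\bigl(\tfrac{x}{2}+1\bigr)$); this is precisely the alternative you sketch in your final sentences. Your main derivation instead avoids the umbral apparatus entirely: you apply the classical difference equation $B_{r}(x+1)-B_{r}(x)=rx^{r-1}$ termwise in the finite defining sum \eqref{poly-ber-st}, and then identify the resulting sum $\sum_{r=1}^{n}\binom{n+r}{2r}\tfrac{rx^{r-1}}{n+r}$ as the $x$-derivative of the polynomial identity in Lemma \ref{cheby-12}, which collapses via \eqref{der-tn}. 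The two arguments are structurally parallel --- Lemma \ref{taylor} is exactly the umbral encoding of the Bernoulli difference equation, since its proof rests on the constitutive relation $-\B = \B+1$ --- so your elementary route can be viewed as a de-umbralization of the paper's. What the paper's version buys is brevity and uniformity with the machinery already built; what yours buys is self-containedness (only finite sums and classical facts about $B_{r}(x)$ and $T_{n}$) and a transparent explanation of why $U_{n-1}$ appears, namely as the derivative of the $T_{n}$ identity of Lemma \ref{cheby-12}. Your telescoping deduction of \eqref{nice-reader2} from \eqref{nice-reader} matches what the paper intends by ``iterate'' and is carried out correctly.
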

\begin{proof}
Apply Lemma \ref{taylor} and the representation of $B_{n}^{*}(x)$ in 
Lemma \ref{cheby-umbral}.
\end{proof}

\begin{note}
The sum in \eqref{zag-pol1} equals 
\begin{equation}
\sum_{r=0}^{n} (-1)^{n+r} \binom{n+r}{2r} \frac{B_{2r}(x)}{n+r} = 
2 B_{2n}^{*}(x-2).
\label{nice-two}
\vm
\end{equation}
\noindent
The proof of this fact is given below. First, use it to express 
\eqref{zag-pol1} as 
\begin{equation}
B_{2n}^{*}(x) - B_{2n}^{*}(x-2)  = \frac{1}{2} \left( 
U_{2n-1} \left( \frac{x}{2} \right) +
U_{2n-1} \left( \frac{x+1}{2} \right)
\right).
\vm
\end{equation}
\noindent
In this form, it can be extended directly to 
\begin{equation}
B_{2n}^{*}(x) - B_{2n}^{*}(x-2k) = \frac{1}{2} \left( 
\sum_{j=0}^{k-1} 
U_{2n-1} \left( \frac{x-2j}{2} \right) +
U_{2n-1} \left( \frac{x-2j+1}{2} \right)
\right).
\vm
\end{equation}

The proof of \eqref{nice-two} is given next. The identity 
\begin{equation}
T_{2n}(x) = (-1)^{n} T_{n}(1 - 2x^{2})
\end{equation}
\noindent
that appears in \cite[7.2.10(7), page 550]{brychkov-2008a} is used in the 
proof. Start with
\begin{eqnarray*}
\sum_{r=0}^{n} (-1)^{n+r} \binom{n+r}{2r} \frac{B_{2r}(x)}{n+r} & = & 
{\rm{eval}} \left\{ (-1)^{n} \sum_{r=0}^{n} \binom{n+r}{2r} 
\frac{( - \B^{2}(x) )^{r}}{n+r} \right\} \\
& = & {\rm{eval}} \left\{ \frac{(-1)^{n}}{n} 
T_{n} \left( - \frac{\B^{2}(x)}{2} + 1 \right) \right\}  \\
& = & {\rm{eval}} \left\{ \frac{(-1)^{n}}{n} 
T_{n} \left( 1 - 2  \left(\frac{\B(x)}{2}  \right)^{2} \right) \right\}  \\
& = & 2 \, {\rm{eval}} \left\{ \frac{1}{2n} 
T_{2n} \left( \frac{\B(x)}{2} \right) \right\}  \\
& = & 2 \, {\rm{eval}} \left\{ \frac{1}{2n} 
T_{2n} \left( \frac{\B(x)-2}{2} +1 \right) \right\}  \\
& = & 2 \, {\rm{eval}} \left\{ \frac{1}{2n} 
T_{2n} \left( \frac{\B(x-2)}{2} +1 \right) \right\}  \\
& = & 2 \sum_{r=0}^{2n} \binom{2n+r}{2r} \frac{B_{r}(x-2)}{2n+r} \\
& = & 2 B_{2n}^{*}(x-2).
\end{eqnarray*}
\end{note}

\medskip

A special case of Theorem \ref{thm-zagier-even} gives a simple proof of 
\eqref{zagier-88}.

\begin{corollary}
\label{coro-zag1}
The modified Bernoulli numbers $B_{2n}^{*}$ are given by 
\begin{equation}
2B_{2n}^{*} = \left( \frac{-3}{n} \right) + 
\sum_{r=0}^{n} (-1)^{n+r} \binom{n+r}{2r} \frac{B_{2r}}{n+r}.
\vm
\end{equation}
\noindent
Here $\left( \frac{ \bullet}{n} \right)$ is the Jacobi symbol. 
\end{corollary}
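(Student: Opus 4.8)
The plan is to deduce this directly from the even-degree representation \eqref{zag-pol1} of Theorem \ref{thm-zagier-even} by specializing to $x = 0$. Since $B_{2r}(0) = B_{2r}$ by the generating function \eqref{gf-berpoly} and $B_{2n}^{*}(0) = B_{2n}^{*}$ by the definition \eqref{zag-polydef}, setting $x = 0$ in \eqref{zag-pol1} gives
\[ 2 B_{2n}^{*} = \sum_{r=0}^{n} (-1)^{n+r} \binom{n+r}{2r} \frac{B_{2r}}{n+r} + U_{2n-1}(0) + U_{2n-1}\left( \tfrac{1}{2} \right). \]
Comparing this with the asserted identity, the whole problem collapses to establishing the closed form
\[ U_{2n-1}(0) + U_{2n-1}\left( \tfrac{1}{2} \right) = \left( \frac{-3}{n} \right). \]

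First I would evaluate the two Chebyshev values from the trigonometric definition \eqref{form-un1}. Writing $0 = \cos(\pi/2)$ yields $U_{2n-1}(0) = \sin(n\pi)/\sin(\pi/2) = 0$ for every integer $n$, so that term drops out completely. Writing $\tfrac{1}{2} = \cos(\pi/3)$ yields $U_{2n-1}(\tfrac{1}{2}) = \sin(2n\pi/3)/\sin(\pi/3)$, a quantity depending only on $n \bmod 3$ and taking the values $0$, $1$, $-1$ according as $n \equiv 0, 1, 2 \pmod 3$. (Alternatively one could read these off from the explicit formula \eqref{form-utwon}, but the trigonometric route is cleaner.)

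It then remains to recognize this $3$-periodic sequence $\{0, 1, -1\}$ as the values of the Jacobi symbol $\left(\frac{-3}{n}\right)$. This is the one genuine input: $\left(\frac{-3}{n}\right)$ is the nontrivial real Dirichlet character modulo $3$, vanishing when $3 \mid n$ and equalling $+1$ or $-1$ according as $n \equiv 1$ or $n \equiv 2 \pmod 3$. I expect this character-identification step to be the only nontrivial point, and it is classical, following from quadratic reciprocity for the Jacobi symbol; everything else is an immediate substitution into \eqref{zag-pol1} together with elementary trigonometry. Once the two tables of values are matched, the corollary follows at once.
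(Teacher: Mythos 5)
Your proposal is correct and follows essentially the same route as the paper: the paper's proof also sets $x=0$ in Theorem \ref{thm-zagier-even}, uses $U_{2n-1}(0)=0$, and identifies the $3$-periodic values $\{0,1,-1\}$ of $U_{2n-1}\left(\tfrac{1}{2}\right)$ with the Jacobi symbol $\left(\frac{-3}{n}\right)$. Your write-up merely supplies the trigonometric evaluations and the character identification in slightly more detail than the paper does.
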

\begin{proof}
Let $x=0$ in Theorem \ref{thm-zagier-even}, use the value $U_{2n-1}(0)=0$ 
and observe that 
\begin{equation}
U_{2n-1} \left( \tfrac{1}{2} \right) = 
\begin{cases}
1 & \quad \text{ if } n \equiv 1 \bmod 3,  \\
-1 & \quad \text{ if } n \equiv -1 \bmod 3,  \\
0 & \quad \text{ if } n \equiv 0 \bmod 3, 
\end{cases}
\vm
\end{equation}
\noindent
can be written as 
$U_{2n-1} \left( \tfrac{1}{2} \right) = \left( \frac{-3}{n} \right)$.
\end{proof}

The next result appears in \cite{zagier-1998a}.

\begin{corollary}
\label{coro-zag2}
Let $n \in \mathbb{N}$. Then
\begin{equation}
B_{2n}^{*} + n = \sum_{r=0}^{2n} \binom{2n+r}{2r} \frac{(-1)^{r} B_{r}}{2n+r}.
\vm
\label{nice-99}
\end{equation}
\end{corollary}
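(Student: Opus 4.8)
The plan is to prove the identity by directly comparing its right-hand side with the definition of the modified Bernoulli number, namely $B_{2n}^{*} = \sum_{r=0}^{2n}\binom{2n+r}{2r}\frac{B_r}{2n+r}$, which is \eqref{zag-polydef} at $x=0$ in degree $2n$. Both sums run over the same range $0 \le r \le 2n$ with the identical weight $\binom{2n+r}{2r}/(2n+r)$, so they differ only through the factor $(-1)^r$. First I would subtract the two to isolate this discrepancy:
\[
\sum_{r=0}^{2n}\binom{2n+r}{2r}\frac{(-1)^r B_r}{2n+r} - B_{2n}^{*} = \sum_{r=0}^{2n}\binom{2n+r}{2r}\frac{\left((-1)^r-1\right)B_r}{2n+r}.
\]
The factor $(-1)^r-1$ vanishes on every even index, so only odd $r$ can contribute.

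The key arithmetic input is then that $B_r=0$ for every odd $r\ge 3$, together with $B_1=-\tfrac12$. Hence the single surviving summand is $r=1$, and I would evaluate it explicitly: since $(-2)B_1 = 1$ and $\binom{2n+1}{2} = n(2n+1)$, the $r=1$ term equals $\binom{2n+1}{2}\frac{(-2)B_1}{2n+1} = n(2n+1)\cdot\frac{1}{2n+1} = n$. This shows the difference of the two sums is exactly $n$, which is precisely the assertion $\sum_{r=0}^{2n}\binom{2n+r}{2r}\frac{(-1)^r B_r}{2n+r} = B_{2n}^{*}+n$. There is essentially no obstacle in this route: the entire content is the vanishing of the higher odd Bernoulli numbers, and the only point requiring care is the bookkeeping of the lone nonzero odd term.

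Alternatively, I would record an umbral derivation that matches the style of the surrounding section and serves as a consistency check. Writing $(-1)^r B_r = {\rm{eval}}\{(-\B)^r\}$ and applying the umbral form of Lemma \ref{cheby-12} in degree $2n$ gives $\sum_{r=0}^{2n}\binom{2n+r}{2r}\frac{(-1)^r B_r}{2n+r} = {\rm{eval}}\{\tfrac{1}{2n}T_{2n}(-\tfrac{\B}{2}+1)\}$. The constitutive equation $-\B=\B+1$ converts the argument $-\tfrac{\B}{2}+1$ into $\tfrac{\B+3}{2}$, which by Lemma \ref{cheby-umbral} with $x=1$ identifies the sum as $B_{2n}^{*}(1)$. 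The difference formula \eqref{nice-reader} with $x=0$, together with $U_{2n-1}(1)=2n$, then yields $B_{2n}^{*}(1)=B_{2n}^{*}+\tfrac12 U_{2n-1}(1)=B_{2n}^{*}+n$. The one delicate point in this second route is tracking the shift in the Chebyshev argument so that it lands on $B_{2n}^{*}(1)$ rather than $B_{2n}^{*}(0)$; the elementary term comparison sidesteps this issue entirely, and it is the proof I would present.
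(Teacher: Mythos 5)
Your proposal is correct, and your primary argument takes a genuinely different, more elementary route than the paper. The paper proves \eqref{nice-99} by recognizing the right-hand side as the special value $B_{2n}^{*}(1)$ (via the symmetry $B_{r}(1)=(-1)^{r}B_{r}$), so that the claim becomes $B_{2n}^{*}(1)=B_{2n}^{*}+n$, which it then gets from the difference formula \eqref{nice-reader} at $x=0$ together with $U_{2n-1}(1)=2n$; your second, umbral derivation is essentially this same argument, with the identification of the right-hand side as $B_{2n}^{*}(1)$ made explicit through Lemma \ref{cheby-umbral} and the constitutive relation $-\B=\B+1$ (a step the paper asserts without comment). Your first argument, by contrast, bypasses the Zagier-polynomial and Chebyshev machinery entirely: since both sums carry the same weight $\binom{2n+r}{2r}/(2n+r)$, their difference is supported on odd $r$, where $B_{r}=0$ for $r\geq 3$, and the lone surviving term $r=1$ contributes $\binom{2n+1}{2}\cdot\frac{(-2)B_{1}}{2n+1}=n(2n+1)\cdot\frac{1}{2n+1}=n$ because $B_{1}=-\tfrac{1}{2}$; the computation checks out. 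What each approach buys: yours is a self-contained two-line verification whose only inputs are $B_{1}=-\tfrac{1}{2}$ and the vanishing of the higher odd Bernoulli numbers (which is also the arithmetic fact hiding inside the paper's use of $B_{r}(1)=(-1)^{r}B_{r}$, so the ultimate content is the same); the paper's route buys structural information, exhibiting the right-hand side as the polynomial value $B_{2n}^{*}(1)$ and the constant $n$ as the Chebyshev value $\tfrac{1}{2}U_{2n-1}(1)$, in keeping with the section's theme of deriving Zagier's identities from \eqref{nice-reader}.
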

\begin{proof}
The right-hand side of \eqref{nice-99} is $B_{2n}^{*}(1)$. Therefore, the 
statement becomes $B_{2n}^{*} + n = B_{2n}^{*}(1)$. This is established by
letting $x=0$ in \eqref{nice-reader} and the value 
\begin{equation}
U_{2n-1}(1) = \lim\limits_{\theta \to 0} \frac{\sin 2 n \theta}{\sin \theta} 
= 2n.
\vm
\end{equation}
\end{proof}

Theorem \ref{thm-zagier-even} is now used to produce another proof of 
Theorem \ref{zagier-per6}. 

\begin{corollary}
The modified Bernoulli numbers $B_{2n+1}^{*}$ are given by 
\begin{equation}
B_{2n+1}^{*} = \frac{(-1)^{n}}{4} + \frac{1}{2} U_{2n}( \tfrac{1}{2}) = 
\frac{(-1)^{n}}{4} + \frac{1}{\sqrt{3}} \sin \left( \frac{(2n+1)\pi}{3} 
\right).
\label{value-per1}
\vm
\end{equation}
\noindent
In particular, $B_{2n+1}^{*}$ is periodic of period $6$.
\end{corollary}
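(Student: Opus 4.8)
The plan is to specialize the odd-degree representation \eqref{zag-pol2} of Theorem \ref{thm-zagier-even} to $x = 0$ and to exploit the vanishing of the odd-indexed Bernoulli numbers. Setting $x = 0$ gives
\begin{equation*}
2 B_{2n+1}^{*} = \sum_{r=0}^{n} (-1)^{n+r} \binom{n+r+1}{2r+1} \frac{B_{2r+1}(0)}{n+r+1} + U_{2n}(0) + U_{2n}\left( \tfrac{1}{2} \right).
\end{equation*}
Since $B_{2r+1}(0) = B_{2r+1} = 0$ for every $r \geq 1$, the entire sum collapses to its single surviving term at $r = 0$, where $B_{1}(0) = B_{1} = -\tfrac{1}{2}$. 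That term equals $(-1)^{n} (n+1) \cdot \tfrac{-1/2}{n+1} = \tfrac{(-1)^{n+1}}{2}$. The only care required here is to recall that the vanishing $B_{2r+1} = 0$ holds for $r \geq 1$ but \emph{not} at $r = 0$.

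Next I would evaluate the two Chebyshev contributions from the defining relation \eqref{form-un1}. Writing $0 = \cos(\pi/2)$ gives $U_{2n}(0) = \sin\left((2n+1)\pi/2\right) = (-1)^{n}$, and writing $\tfrac{1}{2} = \cos(\pi/3)$ gives $U_{2n}\left(\tfrac{1}{2}\right) = \sin\left((2n+1)\pi/3\right)/\sin(\pi/3) = \tfrac{2}{\sqrt{3}} \sin\left((2n+1)\pi/3\right)$. Substituting the surviving sum term together with $U_{2n}(0) = (-1)^{n}$ yields $\tfrac{(-1)^{n+1}}{2} + (-1)^{n} = \tfrac{(-1)^{n}}{2}$, so that
\begin{equation*}
2 B_{2n+1}^{*} = \frac{(-1)^{n}}{2} + U_{2n}\left( \tfrac{1}{2} \right),
\end{equation*}
which is the first claimed identity after dividing by $2$. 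The second identity then follows at once from the closed form of $U_{2n}\left(\tfrac{1}{2}\right)$ just obtained.

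Finally, the periodicity is read off directly from the decomposition $B_{2n+1}^{*} = \tfrac{(-1)^{n}}{4} + \tfrac{1}{\sqrt{3}} \sin\left((2n+1)\pi/3\right)$. The first summand, as a function of $n$, has period $2$, while the second, being $\sin$ evaluated at $(2n+1)\pi/3$, is periodic of period $3$ in $n$ because increasing $n$ by $3$ shifts the argument by $2\pi$. Hence $B_{2n+1}^{*}$ has period $\operatorname{lcm}(2,3) = 6$. This simultaneously exhibits the explicit splitting of the sequence into a period-$2$ part and a period-$3$ part promised in the abstract. I expect no genuine obstacle here beyond the bookkeeping of the single nonzero Bernoulli contribution at $r=0$ and the correct sign in the evaluations $U_{2n}(0)$ and $U_{2n}(\tfrac12)$.
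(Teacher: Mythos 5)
Your proposal is correct and follows exactly the paper's own proof: specializing \eqref{zag-pol2} at $x=0$ so that only the $r=0$ term with $B_{1}=-\tfrac{1}{2}$ survives, evaluating $U_{2n}(0)=(-1)^{n}$ and $U_{2n}\left(\tfrac{1}{2}\right)$ via \eqref{form-un1} with $\theta=\pi/3$, and reading off the $6$-periodicity from the period-$2$ and period-$3$ summands. The only difference is that you spell out the bookkeeping the paper leaves implicit.
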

\begin{proof}
Put $x=0$ in \eqref{zag-pol2} and observe that only the term $r=0$ 
survives in the sum. Now use the value $U_{2n}(0) = (-1)^{n}$ and let 
$\theta = \pi/3$ in 
\eqref{form-un1} to get the result.
\end{proof}

\begin{corollary}
\label{coro-twoder}
For $n \in \mathbb{N}$
\begin{equation}
2 B_{2n+1}^{*} \left( \tfrac{1}{2} \right) = U_{2n} \left( \tfrac{1}{4} \right)
+  U_{2n} \left( \tfrac{3}{4} \right).
\vm
\end{equation}
\end{corollary}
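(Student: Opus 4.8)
The plan is to specialize the odd-degree representation \eqref{zag-pol2} of Theorem \ref{thm-zagier-even} at the single point $x = \tfrac{1}{2}$. Substituting $x=\tfrac{1}{2}$ sends the two Chebyshev terms $U_{2n}(x/2)$ and $U_{2n}((x+1)/2)$ to $U_{2n}(\tfrac{1}{4})$ and $U_{2n}(\tfrac{3}{4})$ respectively, which is exactly the right-hand side of the asserted identity. It therefore suffices to show that the accompanying finite sum
\begin{equation*}
\sum_{r=0}^{n} (-1)^{n+r} \binom{n+r+1}{2r+1} \frac{B_{2r+1}\left( \tfrac{1}{2} \right)}{n+r+1}
\end{equation*}
vanishes.

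The key observation I would use is that every odd-index Bernoulli polynomial vanishes at $x=\tfrac{1}{2}$. To see this I would invoke the reflection symmetry $B_m(1-x) = (-1)^m B_m(x)$ recorded in the introduction; evaluating it at $x=\tfrac{1}{2}$ (so that $1-x=\tfrac{1}{2}$ as well) yields $B_m\!\left( \tfrac{1}{2} \right) = (-1)^m B_m\!\left( \tfrac{1}{2} \right)$, forcing $B_m\!\left( \tfrac{1}{2} \right)=0$ whenever $m$ is odd. In particular $B_{2r+1}\!\left( \tfrac{1}{2} \right)=0$ for all $r \geq 0$, so every summand above is zero and the entire finite sum disappears.

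With the sum eliminated, equation \eqref{zag-pol2} collapses immediately to $2B_{2n+1}^{*}\!\left( \tfrac{1}{2} \right) = U_{2n}\!\left( \tfrac{1}{4} \right) + U_{2n}\!\left( \tfrac{3}{4} \right)$, which is the claim. I do not expect any genuine obstacle: once one has the representation \eqref{zag-pol2} in hand, the whole argument is a single substitution, and the vanishing of the $B_{2r+1}\!\left( \tfrac{1}{2} \right)$ is a routine consequence of the standard parity/reflection property of the Bernoulli polynomials. The only point requiring a moment's care is the boundary case $r=0$, where $B_1\!\left( \tfrac{1}{2} \right) = \tfrac{1}{2}-\tfrac{1}{2}=0$ must be checked directly, but this is consistent with the symmetry argument and causes no difficulty.
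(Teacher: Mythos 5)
Your proof is correct and follows essentially the same route as the paper: set $x=\tfrac{1}{2}$ in \eqref{zag-pol2} and observe that the Bernoulli sum vanishes. The only cosmetic difference is the justification of $B_{2r+1}\left(\tfrac{1}{2}\right)=0$ — the paper cites the identity $B_{j}\left(\tfrac{1}{2}\right)=-(1-2^{1-j})B_{j}$, while you use the reflection symmetry $B_{m}(1-x)=(-1)^{m}B_{m}(x)$; both are standard and equivalent for this purpose.
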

\begin{proof}
Let $x = \tfrac{1}{2}$ in \eqref{zag-pol2} and use the fact that 
$B_{j}(\tfrac{1}{2} ) = - (1- 2^{1-j}) B_{j}$.
\end{proof}

\section{A reflection symmetry of the Zagier polynomials}
\label{mod-ber-poly-reflection} 
\setcounter{equation}{0}

The classical Bernoulli polynomials $B_{n}(x)$ exhibit symmetry with 
respect to the line $x = \tfrac{1}{2}$ in the form 
\begin{equation}
B_{n}(1-x) = (-1)^{n}B_{n}(x).
\vm
\end{equation}
\noindent
This section describes the corresponding property for the Zagier 
polynomials: their symmetry is with 
respect to the line $x = -\tfrac{3}{2}$. 

\begin{theorem}
\label{thm-symm}
The Zagier polynomials satisfy the relation 
\begin{equation}
B_{n}^{*}(-x-3) = (-1)^{n} B_{n}^{*}(x).
\end{equation}
\end{theorem}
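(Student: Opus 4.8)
The plan is to prove the identity at the level of generating functions, where it collapses to a single substitution. Theorem \ref{gf-bstar-poly} gives
$\sum_{n\ge1}B_{n}^{*}(x)z^{n}=-\tfrac12\log z-\tfrac12\psi(z+1/z-1-x)$.
Replacing $x$ by $-x-3$ turns the argument of the digamma function into $z+1/z-1-(-x-3)=z+1/z+2+x$, so
$\sum_{n\ge1}B_{n}^{*}(-x-3)z^{n}=-\tfrac12\log z-\tfrac12\psi(z+1/z+2+x)$.
By Corollary \ref{gf-bstaralt-poly} the right-hand side is precisely $F_{B^{*}}(x;-z)=\sum_{n\ge1}(-1)^{n}B_{n}^{*}(x)z^{n}$. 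Comparing coefficients of $z^{n}$ on the two series yields $B_{n}^{*}(-x-3)=(-1)^{n}B_{n}^{*}(x)$, which is the claim.

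For thematic consistency with the Chebyshev machinery of this section, I would also record the umbral derivation. Starting from Lemma \ref{cheby-umbral}, $B_{n}^{*}(x)=\mathrm{eval}\{\tfrac1n T_{n}(\tfrac{\mathfrak{B}+x+2}{2})\}$, the substitution $x\mapsto-x-3$ produces $\mathrm{eval}\{\tfrac1n T_{n}(\tfrac{\mathfrak{B}-x-1}{2})\}$. The parity $T_{n}(-y)=(-1)^{n}T_{n}(y)$ rewrites this as $\mathrm{eval}\{\tfrac{(-1)^{n}}{n}T_{n}(\tfrac{-\mathfrak{B}+x+1}{2})\}$, and the constitutive equation $-\mathfrak{B}=\mathfrak{B}+1$ replaces $-\mathfrak{B}$ by $\mathfrak{B}+1$, turning the argument into $\tfrac{\mathfrak{B}+x+2}{2}$ and the whole expression into $(-1)^{n}B_{n}^{*}(x)$. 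Equivalently, in the probabilistic form of Lemma \ref{modi-ver1} the reflection $x\mapsto-x-3$ sends $\tfrac x2+\tfrac34+\tfrac12 iL_{B}$ to $-(\tfrac x2+\tfrac34)+\tfrac12 iL_{B}$; parity of $T_{n}$ together with the evenness of the density $f_{L_{B}}$ (so $L_{B}$ and $-L_{B}$ are equidistributed, Theorem \ref{density-B}) then finishes the argument.

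I do not expect a serious obstacle: all three routes are short. The only point needing care is the legitimacy of the substitution $-\mathfrak{B}=\mathfrak{B}+1$ inside $T_{n}$; this is justified because $T_{n}$ is a polynomial, so $T_{n}$ of a linear expression in $\mathfrak{B}$ is again a polynomial in $\mathfrak{B}$, to which the constitutive equation applies termwise by linearity of $\mathrm{eval}$. In the probabilistic variant the analogous point is simply the evenness of $f_{L_{B}}$, already established. I would present the generating-function proof as the main argument, since it uses only Theorem \ref{gf-bstar-poly} and Corollary \ref{gf-bstaralt-poly} and requires nothing beyond matching the two closed forms and equating coefficients.
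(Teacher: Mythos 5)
Your proposal is correct and takes essentially the same approach as the paper, which in fact records both of your routes: your main generating-function argument coincides with the paper's first proof (the paper performs the substitution $(x,z)\mapsto(-x-3,-z)$ at the $\mathrm{eval}$ level, which is precisely the content of Corollary \ref{gf-bstaralt-poly} that you invoke before matching coefficients), and your umbral derivation via the parity $T_{n}(-y)=(-1)^{n}T_{n}(y)$ together with $-\mathfrak{B}=\mathfrak{B}+1$ reproduces the paper's second proof step for step. Your care about applying the constitutive equation inside the polynomial $T_{n}$, justified by linearity of $\mathrm{eval}$, is exactly the right point to flag and is sound.
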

\begin{proof}
The first proof uses the generating function $F_{B^{*}}(x,z)$. Replacing 
$(x,z)$ by $(-x-3,-z)$ in the second line of the proof of Lemma
\ref{gf-bstar-poly} gives 
\begin{eqnarray*}
F_{B^{*}}(-x-3,-z) 
 & = &  - {\rm{eval}} \left\{ 
\frac{1}{2} \log \left( (1+z)^{2} - z(x+3) + z \B \right) \right\}
\\
 & = &  - \frac{1}{2} \log z - {\rm{eval}} \left\{ 
\frac{1}{2} \log \left(z+ 1/z + \B -1 - x \right) \right\} \\
& = & F_{B^{*}}(x,z).
\end{eqnarray*}
\noindent
This proves the statement.
\end{proof}

\smallskip

A second proof of Theorem \ref{thm-symm} uses the expression for the 
Zagier polynomials $B_{n}^{*}(x)$ in terms of the Chebyshev polynomials 
$T_{n}(x)$. Indeed, using $T_{n}(z) = (-1)^{n}T_{n}(z)$, 
\begin{eqnarray*}
B_{n}^{*}(-x-3) & = & 
\frac{1}{n} T_{n} \left( 
\frac{-x-3+ \B}{2} + 1 \right) \\
& = & \frac{1}{n} T_{n} \left( 
\frac{-x+ \B}{2}  - \frac{1}{2} \right) \\
& = & \frac{(-1)^{n}}{n} T_{n} \left( 
\frac{x- \B}{2}  + \frac{1}{2} \right) \\
& = & \frac{(-1)^{n}}{n} T_{n} \left( 
\frac{x + \B + 1}{2}  + \frac{1}{2} \right) \\
& = & \frac{(-1)^{n}}{n} T_{n} \left( 
\frac{x + \B }{2}  + 1 \right) \\
& = & (-1)^{n} B_{n}^{*}(x).
\end{eqnarray*}

The rest of the section gives a third proof of Theorem \ref{thm-symm}.

\begin{proof}
The induction hypothesis states that
$B_{m}^{*}(-x-3) = (-1)^{m}B_{m}^{*}(x)$ for all $m < n$.  The discussion is 
divided according to the parity of $n$.

\smallskip

\noindent
\textit{Case 1}. For $n$ is even, Theorem \ref{der-mod-zag} gives 
\begin{eqnarray*}
\frac{d}{dx}B_{n}^{*}(-x-3) & = & - \sum_{j=1}^{n/2} (2j-1)B_{2j-1}^{*}(-x-3) 
\vm \\
& = & - \sum_{j=1}^{n/2} (2j-1)(-1)^{2j-1}B_{2j-1}^{*}(x) \vm \\
& = &  \sum_{j=1}^{n/2} (2j-1)B_{2j-1}^{*}(x) \vm \\
& = & \frac{d}{dx} B_{n}^{*}(x). \vm
\end{eqnarray*}
\noindent 
It follows that $B_{n}^{*}(-x-3)$ and $B_{n}^{*}(x)$ differ by a constant. 
Now evaluate at $x = - \tfrac{3}{2}$ to see that this constant vanishes.
  
\smallskip

\noindent
\textit{Case 2}. Now assume $n$ is odd. The previous argument now gives 
\begin{equation}
B_{n}^{*}(-x-3) = - B_{n}^{*}(x) + C_{n}
\label{const1}
\end{equation}
\noindent
for some constant $C_{n}$. It remains to show $C_{n}=0$.

Iterating the relation 
\begin{equation}
B_{n}(x+1) = B_{n}(x) + nx^{n-1}
\vm
\end{equation}
\noindent
gives 
\begin{equation}
B_{n}(x+3) = B_{n}(x) + nx^{n-1} + n(x+1)^{n-1} + n(x+2)^{n-1}.
\label{recu-nice11}
\vm
\end{equation}
\noindent
Replace $x = - \tfrac{3}{2}$ in \eqref{const1} and in \eqref{recu-nice11} 
and observe that 
\begin{equation}
C_{n} = 2 \sum_{r=0} \frac{\binom{n+r}{2r}}{n+r} 
\left[ B_{r} \left( \tfrac{1}{2} \right) - 
r \left( - \tfrac{3}{2} \right)^{r-1} - 
r \left( - \tfrac{1}{2} \right)^{r-1} \right].
\end{equation}
\noindent
Thus, to show $C_{n}=0$, it is required to prove 
\begin{equation}
\sum_{r=0}^{n} \frac{\binom{n+r}{2r}}{n+r} B_{r} \left( \tfrac{1}{2} \right)
= \sum_{r=0}^{n} \frac{\binom{n+r}{2r}}{n+r} 
\left[ r \left( - \tfrac{3}{2} \right)^{r-1}  + 
r \left( - \tfrac{1}{2} \right)^{r-1} \right].
\end{equation}
\noindent
The left-hand side is nothing but $B_{n}^{*} \left( \tfrac{1}{2} \right)$. 
The  right-hand side is $V_{n}'(-\tfrac{3}{2}) + V_{n}'(- \tfrac{1}{2})$, where 
\begin{equation}
V_{n}(x) = \sum_{r=0}^{n} \frac{\binom{n+r}{2r}}{n+r} x^{r}.
\end{equation}
\noindent
Lemma \ref{cheby-12} shows that  
\begin{equation}
V_{n}(x) = \frac{1}{n}T_{n} \left( \frac{x}{2} + 1 \right).
\end{equation}
\noindent
Hence it suffices to show that 
\begin{equation}
2B_{n}^{*} \left( \tfrac{1}{2} \right) = 
U_{n-1} \left( \tfrac{1}{4} \right) + U_{n-1} \left( \tfrac{3}{4} \right).
\label{special-case}
\end{equation}
\noindent
This is the result of Corollary \ref{coro-twoder}.
\end{proof}

\begin{note}
We note that  unlike \eqref{zag-pol1}, the formula in \eqref{zag-pol2}, of 
which \eqref{special-case} is a special case, does not use the 
symmetry $B_{2n+1}^{*}(-x-3) = - B_{2n+1}^{*}(x)$ in its proof.
\end{note}

\section{Values of Zagier polynomials that yield periodic sequences}
\label{sec-periodic} 
\setcounter{equation}{0}

The original observation of Zagier, that $B_{2n+1}^{*} = B_{2n+1}^{*}(0)$ 
is a periodic sequence 
(with period $6$ and values $\{ \tfrac{3}{4}, \, - \tfrac{1}{4}, 
\, - \tfrac{1}{4}, \, \tfrac{1}{4}, \, \tfrac{1}{4}, \, - \tfrac{3}{4} \}$) is 
extended here to other values of $x$. The first part of the discussion is to 
show that periodicity of $B_{2n+1}^{*}(x)$ implies that $2x$ is an 
integer.

\medskip

The discussion begins with an elementary statement.

\begin{lemma}
The sequence 
$\{ a_{n} \}$ is periodic, with minimal period $p$, if and only
if its generating function 
\begin{equation}
A(z) = \sum_{n=0}^{\infty} a_{n}z^{n} 
\end{equation}
\noindent
is a rational function of $z$ such that, when written in reduced form, the 
denominator has the form $D(z) = 1 - z^{p}$. 
\end{lemma}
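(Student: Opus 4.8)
The plan is to prove both implications by working directly with the ordinary generating function $A(z)=\sum_{n\ge 0}a_n z^n$, using the factorization of $1-z^p$ into cyclotomic polynomials to control the passage to lowest terms. Throughout, periodicity with period $p$ means $a_{n+p}=a_n$ for every $n\ge 0$, and I would recall the classical fact that $1-z^p=\prod_{d\mid p}\Phi_d(z)$ up to a unit, where $\Phi_d$ is the $d$-th cyclotomic polynomial; in particular the roots of $1-z^p$ are exactly the $p$-th roots of unity, each simple.

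For the implication that periodicity gives the asserted rational form, I would split the sum according to residues modulo $p$. Writing $n=kp+j$ with $0\le j\le p-1$ and using $a_{kp+j}=a_j$ gives
\begin{equation*}
A(z)=\sum_{j=0}^{p-1}a_j z^{j}\sum_{k=0}^{\infty}z^{kp}=\frac{N(z)}{1-z^{p}},\qquad N(z)=\sum_{j=0}^{p-1}a_j z^{j}.
\end{equation*}
Thus $A$ is rational with $\deg N\le p-1$ and unreduced denominator $1-z^p$. To reach reduced form I would cancel the common cyclotomic factors of $N$ and $1-z^p$, so that the reduced denominator is $\prod_{d\in S}\Phi_d(z)$, where $S$ is the set of divisors $d\mid p$ for which $\Phi_d$ does not divide $N$.

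For the converse I would argue in the clean direction first. If $A(z)=N(z)/(1-z^p)$ in lowest terms with $\deg N<p$, then clearing denominators yields $a_n-a_{n-p}=[z^n]N(z)=0$ for all $n\ge p$, so the sequence is periodic of period $p$. Minimality is where the cyclotomic factors pay off: since the fraction is reduced and $\Phi_p\mid(1-z^p)$, the primitive $p$-th roots of unity are genuine poles of $A$, and a partial-fraction expansion then forces a summand oscillating with exact period $p$, so no proper divisor $q\mid p$ can serve as a period. Hence the minimal period is exactly $p$, and this direction goes through smoothly.

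The main obstacle is matching the minimal period with the precise shape of the reduced denominator in the forward implication. By partial fractions $a_n$ is a combination of $n$-th powers of the poles, which are all roots of unity, and the minimal period equals the least common multiple of their orders; so a priori the reduction only yields that the reduced denominator divides $1-z^p$ with $\operatorname{lcm}(S)=p$. Upgrading this to the literal equality $D(z)=1-z^p$ requires showing that no cyclotomic factor $\Phi_d$ with $d\mid p$ is cancelled by $N$, equivalently that every $p$-th root of unity survives as a pole. This is the delicate step and the one demanding care: in general the period pins down only the least common multiple of the surviving orders, not the full set of divisors, so one must either exploit the specific structure of the sequence at hand or read the conclusion as asserting that the reduced denominator divides $1-z^p$ with $p$ minimal. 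I would therefore isolate this point as the crux and take as the substantive content the robust statement — rationality with denominator a product of cyclotomic factors whose orders have least common multiple $p$ — deriving the stated form from it.
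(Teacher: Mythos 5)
The paper states this lemma without proof (it is offered as ``an elementary statement''), so there is nothing to compare line by line; your argument stands on its own, and the residue-class decomposition giving $A(z)=N(z)/(1-z^{p})$ with $N(z)=\sum_{j=0}^{p-1}a_{j}z^{j}$ is the natural route. More importantly, the crux you isolated is a real defect in the lemma as literally stated, not a gap in your proof: minimality of the period does \emph{not} force the reduced denominator to equal $1-z^{p}$. A concrete counterexample: let $a_{n}=(-1)^{n}+c_{n}$ where $c_{n}$ is the $3$-periodic sequence $1,0,-1$. The minimal period is $6$, yet
\begin{equation*}
A(z)=\frac{1}{1+z}+\frac{1-z^{2}}{1-z^{3}}
=\frac{2+3z+2z^{2}}{(1+z)(1+z+z^{2})},
\end{equation*}
and the numerator vanishes at neither $z=-1$ (value $1$) nor a primitive cube root of unity $\omega$ (value $\omega$), so the reduced denominator is $\Phi_{2}(z)\Phi_{3}(z)$, a proper divisor of $1-z^{6}$. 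Here the least common multiple of the pole orders is $6$, exactly as your ``robust statement'' predicts. Ironically, the paper's own central example exhibits the same cancellation: in Theorem \ref{gf-odd11} the numerator $3z^{11}-z^{9}-z^{7}+z^{5}+z^{3}-3z$ vanishes at $z=\pm 1$, so $4(z^{12}-1)$ is not the reduced denominator there either --- consistent with the abstract's remark that $B_{2n+1}^{*}$ splits into parts of periods $2$ and $3$. The correct formulation, and the only one the paper actually uses (rationality of the generating function in Theorem \ref{two-z}, and poles lying at roots of unity in the argument around \eqref{Y}), is precisely yours: $\{a_{n}\}$ is periodic with minimal period $p$ if and only if $A(z)$ is rational, proper, with reduced denominator dividing $1-z^{p}$ and having simple roots whose orders have least common multiple $p$.

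Two small points to tighten. In the converse you wrote ``with $\deg N<p$,'' but the lemma's hypothesis of reduced form does not impose this, and without it the conclusion fails: $A(z)=(z^{p}+c)/(1-z^{p})$ is in lowest terms for generic $c$ yet gives $a_{0}=c\neq a_{p}=1+c$, so the sequence is only \emph{eventually} periodic. The properness assumption should be made explicit (it is automatic in the forward direction since $\deg N\leq p-1$). Second, your minimality argument via ``a summand oscillating with exact period $p$'' is sound but can be stated more crisply without partial fractions: if $q<p$ were a period, the forward direction would give $A(z)=M(z)/(1-z^{q})$, whose poles are $q$-th roots of unity, contradicting that a primitive $p$-th root of unity is a pole of the reduced fraction $N(z)/(1-z^{p})$.
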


\noindent
\textbf{Special values of $B_{2n+1}^{*}(x)$}. The case considered here 
discusses values of $x$ such that $\{ B_{2n+1}^{*}(x) \}$ is a periodic 
sequence. The generating function of this sequence is given in \eqref{diff-psi}
as 
\begin{equation}
\sum_{n=0}^{\infty} B_{2n+1}^{*}(x)z^{2n+1} =
\frac{1}{4} \psi \left( z + 1/z +2+x  \right) - 
\frac{1}{4} \psi \left( z + 1/z -1 - x  \right).
\label{diff-psi0}
\end{equation}

\begin{proposition}
\label{shift-b}
Let $b \in \mathbb{R}$ be fixed. Then 
\begin{equation}
\psi(t+b) - \psi(t) = R(t)
\label{psi-00}
\end{equation}
\noindent
for some rational function $R(t)$ if and only if $b \in \mathbb{Z}$.
\end{proposition}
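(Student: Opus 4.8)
The plan is to exploit the meromorphic structure of the digamma function; the pole pattern of $\psi$ makes the rationality of $\psi(t+b)-\psi(t)$ equivalent to the integrality of $b$.

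First I would dispose of the easy direction. If $b\in\mathbb{Z}$, the functional equation already quoted in the paper, namely $\psi(t+m)=\psi(t)+\sum_{k=0}^{m-1}\frac{1}{t+k}$ for $m\ge 1$ (together with its backward version $\psi(t)=\psi(t+1)-1/t$ to cover negative $m$), shows that $\psi(t+b)-\psi(t)$ is a finite sum of terms of the form $\pm 1/(t+k)$, hence a rational function. So the real content of the statement is the converse.

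For the converse I would recall the standard fact that $\psi$ is meromorphic on $\mathbb{C}$ with simple poles exactly at the non-positive integers $0,-1,-2,\dots$, each of residue $-1$. Consequently $\psi(t+b)$ has simple poles of residue $-1$ at the points $t=-b-k$ with $k\ge 0$, while $-\psi(t)$ has simple poles of residue $+1$ at $t=-j$ with $j\ge 0$. A pole of the difference $\psi(t+b)-\psi(t)$ can fail to appear only when a pole of $\psi(t+b)$ coincides with one of $\psi(t)$, in which case the residues sum to $-1+1=0$ and the singularity is removable. The key observation is that such a coincidence $-b-k=-j$ forces $b=j-k\in\mathbb{Z}$. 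Hence if $b\notin\mathbb{Z}$ the two progressions $\{-b-k\}_{k\ge 0}$ and $\{-j\}_{j\ge 0}$ are disjoint, none of the poles of $\psi(t+b)$ is cancelled, and $\psi(t+b)-\psi(t)$ has infinitely many genuine poles. Since a rational function has only finitely many poles, $R(t)=\psi(t+b)-\psi(t)$ cannot be rational; this contradiction yields $b\in\mathbb{Z}$ and completes the proof.

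The main obstacle is really just careful bookkeeping: pairing the residues correctly so that cancellation occurs precisely at coincident poles, and confirming that for non-integer $b$ the two arithmetic progressions of poles are disjoint, which is exactly the condition $b\notin\mathbb{Z}$. Everything else reduces to the elementary fact that a rational function has finitely many poles, so I expect no serious difficulty beyond stating the pole structure of $\psi$ cleanly.
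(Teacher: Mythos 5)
Your proof is correct, and it takes a genuinely different route from the paper's. The paper proceeds by integrating the hypothesized identity \eqref{psi-00} to get $\ln \Gamma(t+b) - \ln \Gamma(t) = R_{1}(t) + \ln R_{2}(t) + C_{1}$, exponentiating, ruling out an essential singularity on the right to conclude that $R_{1}$ is a polynomial, comparing behaviour as $t \to \pm\infty$ to force $R_{1}$ to be constant, and only then comparing the pole sets of $\Gamma(t+b)$ and of $R_{2}(t)\Gamma(t)$ to conclude $b \in \mathbb{Z}$. You instead work directly with the meromorphic structure of $\psi$ itself: simple poles exactly at the non-positive integers, each of residue $-1$, so that for $b \notin \mathbb{Z}$ the pole sets $\{-b-k : k \geq 0\}$ and $\{-j : j \geq 0\}$ are disjoint, no residue cancellation can occur, and $\psi(t+b)-\psi(t)$ has infinitely many poles --- impossible for a rational function. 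Your argument is shorter and avoids the two more delicate steps in the paper (excluding the essential singularity, and the growth comparison that pins down $R_{1}$); what the paper's detour buys is the multiplicative relation $\Gamma(t+b) = C_{3}R_{2}(t)\Gamma(t)$, which is of some independent interest but not needed for the proposition. The one step you should state explicitly is the appeal to the identity theorem: the hypothesis is an equality of functions of a real variable on an interval, and since both sides extend meromorphically to $\mathbb{C}$, the equality propagates to the continuations, after which your pole count applies; note, though, that the paper's manipulations rely on the same continuation just as implicitly. Your treatment of the easy direction, including negative $b$ via the backward relation $\psi(t) = \psi(t+1) - 1/t$, is a slight completion of the paper's, which simply reduces to the case $b > 0$.
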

\begin{proof}
Assume $b \in \mathbb{Z}$. It is clear that $b$ may be assumed positive. 
Iteration of $\psi(t+1) = \psi(t) + 1/t$ yields 
\begin{equation}
\psi(t+b) = \psi(t) + \sum_{k=0}^{b-1} \frac{1}{t+k}.
\label{usual-reduction}
\end{equation}
\noindent 
Therefore $\psi(t+b) - \psi(t)$ is a rational function. To prove 
the converse, assume \eqref{psi-00}
holds for some rational function $R$. Integrating both sides with respect to 
$t$ gives 
\begin{equation}
\ln \Gamma(t+b) - \ln \Gamma(t) = R_{1}(t) + \ln R_{2}(t) + C_{1}
\end{equation}
\noindent
for a pair of rational functions $R_{1}, \, R_{2}$ (coming from the 
integration of $R(t)$) and a 
constant of integration $C_{1}$. It follows that 
\begin{equation}
\frac{\Gamma(t+b)}{C_{2} R_{2}(t)\Gamma(t)} = e^{R_{1}(t)}.
\label{poles-0}
\end{equation}
\noindent
The singularities of the left-hand side are (at most) poles. On the 
other hand, the presence of a pole of $R_{1}(t)$ produces 
an essential singularity for the right-hand
side of \eqref{poles-0}. It follows that $R_{1}(t)$ is a polynomial. 
Comparing
the behavior of \eqref{poles-0}  as $t \to \pm \infty$ shows that $R_{1}$ 
must be a constant; that is,
\begin{equation}
\Gamma(t+b) = C_{3}R_{2}(t) \Gamma(t).
\label{last-pole}
\end{equation}
\noindent
The set equality
\begin{equation}
\{ b - k: \, k \in \mathbb{N} \} = 
\{ -k  : \, k \in \mathbb{N} \} \cup \{ t_{1}, \, t_{2}, \cdots, t_{r} \}
\end{equation}
\noindent
where $t_{i}$ are the poles of $R$ comes from comparing poles in 
\eqref{last-pole}.  Now take $k \in \mathbb{N}$ sufficiently
large so that $b - k \not = t_{i}$. Then $b - k = -k_{1}$ for some 
$k_{1} \in \mathbb{N}$. Therefore $b = k - k_{1} \in \mathbb{Z}$, as claimed. 
\end{proof}

The next lemma deals with the transition from the variable $z$ to 
$z + 1/z$.

\begin{lemma}
\label{rat-inv}
Assume $R(z)$ is a rational function that satisfies $R(z) = R(1/z)$. Then 
$R$ is a function of $1/z+z$ only.
\end{lemma}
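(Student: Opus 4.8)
The plan is to reduce the statement to the elementary fact that a Laurent polynomial invariant under $z \mapsto 1/z$ is a polynomial in $w := z + 1/z$, and then to arrange the denominator of $R$ so that this fact applies. First I would write $R(z) = P(z)/Q(z)$ with $P,Q$ polynomials and symmetrize the denominator: set $\widetilde{Q}(z) = Q(z)\,Q(1/z)$, a Laurent polynomial satisfying $\widetilde{Q}(1/z) = \widetilde{Q}(z)$. Then $R(z) = N(z)/\widetilde{Q}(z)$ with $N(z) = P(z)\,Q(1/z)$, which is manifestly a Laurent polynomial. The key observation is that $N$ is automatically invariant: since $R(1/z) = R(z)$ and $\widetilde{Q}(1/z) = \widetilde{Q}(z)$, one gets $N(1/z) = R(1/z)\widetilde{Q}(1/z) = R(z)\widetilde{Q}(z) = N(z)$.

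Next I would prove the auxiliary lemma that a symmetric Laurent polynomial (one invariant under $z \mapsto 1/z$) is a polynomial in $w$. Such an expression is a linear combination of the symmetric pairs $z^{k} + z^{-k}$, and each of these is a polynomial in $w$: this follows from the recurrence $z^{k} + z^{-k} = w\,(z^{k-1} + z^{-(k-1)}) - (z^{k-2} + z^{-(k-2)})$, or, pleasantly in the spirit of this paper, from the identity $z^{k} + z^{-k} = 2\,T_{k}(w/2)$ obtained by putting $z = e^{i\theta}$ and $w = 2\cos\theta$ in \eqref{form-tn1}. Applying this to both $N$ and $\widetilde{Q}$ yields polynomials $A$ and $B$ with $N(z) = A(w)$ and $\widetilde{Q}(z) = B(w)$, whence $R(z) = A(w)/B(w)$ is a rational function of $w = z + 1/z$, as claimed.

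I do not expect a serious obstacle; the argument is a short reduction. The one step that requires a little care is the symmetrization of the denominator together with the verification that the resulting numerator $N$ is both a genuine Laurent polynomial and invariant under $z \mapsto 1/z$ — but, as noted, the invariance of $N$ is forced by the invariance of $R$ and $\widetilde{Q}$ and is therefore automatic. An alternative, even shorter route is purely field-theoretic: $z$ satisfies $z^{2} - wz + 1 = 0$ over $\mathbb{C}(w)$, so $\mathbb{C}(z)/\mathbb{C}(w)$ has degree two with nontrivial automorphism $z \mapsto 1/z$, and the fixed field of this involution is exactly $\mathbb{C}(w)$; since $R(z) = R(1/z)$ says that $R$ lies in that fixed field, $R \in \mathbb{C}(w)$. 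I would present the constructive argument above as the main proof, since it exposes the Chebyshev link used elsewhere in the paper.
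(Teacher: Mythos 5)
Your proof is correct, and it differs from the paper's in the symmetrization device, though both arguments rest on the same key fact. The paper writes $R(z)=P(z)/Q(z)$, equates it with $R(1/z)=P(1/z)/Q(1/z)$, and applies the mediant identity $\frac{u}{v}=\frac{U}{V}$ implies $\frac{u}{v}=\frac{u+U}{v+V}$ to exhibit $R$ as a ratio of the \emph{additively} symmetrized pieces $P(z)+P(1/z)$ and $Q(z)+Q(1/z)$; it then invokes, exactly as you do, the fact that $z^{j}+1/z^{j}$ is a polynomial in $w=z+1/z$ (citing entry $1.331.3$ of \cite{gradshteyn-2007a}; your identity $z^{j}+z^{-j}=2T_{j}(w/2)$ is the same fact in Chebyshev form, which fits the paper's themes nicely). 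You instead symmetrize \emph{multiplicatively}, clearing the denominator with $Q(1/z)$, and deduce the invariance of the numerator $N=R\,\widetilde{Q}$ from that of $R$ and $\widetilde{Q}$; this step is sound, and your route has the small technical advantage that $\widetilde{Q}(z)=Q(z)Q(1/z)$ is automatically not identically zero, whereas the paper's mediant step tacitly requires $Q(z)+Q(1/z)\not\equiv 0$ (true, since $2b_{0}+\sum_{j\geq 1}b_{j}\left(z^{j}+z^{-j}\right)\equiv 0$ forces all $b_{j}=0$, but this is left unsaid in the paper). Your alternative field-theoretic argument --- $z$ satisfies $z^{2}-wz+1=0$, so $\mathbb{C}(z)/\mathbb{C}(w)$ has degree two and $\mathbb{C}(w)$ is the fixed field of the involution $z\mapsto 1/z$ --- is also correct and is the most conceptual of the three proofs, though it delivers more machinery than this elementary lemma requires.
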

\begin{proof}
It is assumed that 
\begin{equation}
R(z) = \frac{a_{0} + a_{1}z + \cdots + a_{n}z^{n}}
{b_{0} + b_{1}z + \cdots + b_{m}z^{m}} = 
\frac{a_{0} + a_{1}/z + \cdots + a_{n}/z^{n}}
{b_{0} + b_{1}/z + \cdots + b_{m}/z^{m}}.
\end{equation}
\noindent
Now use the fact that
\begin{equation}
\frac{u}{v} = \frac{U}{V} \text{ implies } 
\frac{u}{v} = \frac{U}{V} = 
\frac{u+U}{v+V}
\end{equation}
\noindent
to conclude that 
\begin{equation*}
R(z) = \frac{2a_{0} + a_{1}(z+ 1/z) + a_{2} (z^{2} +1/z^{2}) + \cdots +
a_{n}(z^{n} + 1/z^{n}) }
{2b_{0} + b_{1}(z+ 1/z) + b_{2} (z^{2} +1/z^{2}) + \cdots +
b_{m}(z^{m} + 1/z^{m}) }.
\end{equation*}
\noindent
The conclusion follows from the fact that $z^{j} + 1/z^{j}$ is a 
polynomial in $z+1/z$. This is given in entry $1.331.3$ 
of \cite{gradshteyn-2007a}.
\end{proof}

\begin{theorem}
\label{two-z}
The generating function 
\begin{equation}
\sum_{n=0}^{\infty} B_{2n+1}^{*}(x) z^{2n+1}
\label{nine-00}
\end{equation}
\noindent
is a rational function of $z$ if and only if $2x \in \mathbb{Z}$.
\end{theorem}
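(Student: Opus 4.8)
The plan is to read off the closed form of the generating function from \eqref{diff-psi0} and reduce the question to the rationality of a shifted digamma difference, which is exactly the content of Proposition \ref{shift-b}; the passage between the variable $z$ and the combination $z + 1/z$ is then controlled by Lemma \ref{rat-inv}. Writing $t = z + 1/z$, equation \eqref{diff-psi0} gives
\[
G(z) := \sum_{n=0}^{\infty} B_{2n+1}^{*}(x)\, z^{2n+1} = \tfrac{1}{4}\psi(t + 2 + x) - \tfrac{1}{4}\psi(t - 1 - x),
\]
so that $G$ depends on $z$ only through $t$. After the substitution $s = t - 1 - x$ the bracketed difference becomes $\psi(s + 3 + 2x) - \psi(s)$, which is the form to which Proposition \ref{shift-b} applies.

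For the easy direction I would assume $2x \in \mathbb{Z}$, so that $3 + 2x \in \mathbb{Z}$ as well. Proposition \ref{shift-b} then produces a rational function $R$ with $\psi(s + 3 + 2x) - \psi(s) = R(s)$. Since $s = z + 1/z - 1 - x$ is itself a rational function of $z$, the composition $R(s)$ is rational in $z$, and hence so is $G(z)$.

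The converse is where the structural input is needed. Assuming $G(z)$ is rational in $z$, I would first observe that the right-hand side above is invariant under $z \mapsto 1/z$, because $t = z + 1/z$ is. Lemma \ref{rat-inv} then guarantees that $G$ is in fact a rational function of $t$, say $G = \tilde{R}(t)$. Consequently $\psi(t + 2 + x) - \psi(t - 1 - x) = 4\tilde{R}(t)$ is rational in $t$; equivalently, $\psi(s + 3 + 2x) - \psi(s)$ is a rational function of $s$. The converse half of Proposition \ref{shift-b} now forces $3 + 2x \in \mathbb{Z}$, and therefore $2x \in \mathbb{Z}$.

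The one step that deserves care, and which I expect to be the main (if minor) obstacle, is the legitimacy of treating $\psi(t + 2 + x) - \psi(t - 1 - x) = 4\tilde{R}(t)$ as an identity in the free variable $t$ rather than only for values of the special form $t = z + 1/z$. This is resolved by noting that $z \mapsto z + 1/z$ maps a neighborhood onto an open subset of $\mathbb{C}$, so the equality of these two meromorphic functions of $t$ on an open set propagates to all $t$ by analytic continuation; only then may Proposition \ref{shift-b} be invoked in the variable $s = t - 1 - x$.
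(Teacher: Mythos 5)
Your proof is correct and follows essentially the same route as the paper: both directions reduce to Proposition \ref{shift-b} after using Lemma \ref{rat-inv} to pass from rationality in $z$ to rationality in $t = z + 1/z$, with the same substitution $s = t - 1 - x$ turning the difference into $\psi(s + 3 + 2x) - \psi(s)$. Your closing remark on analytic continuation makes explicit a point the paper's proof leaves implicit, namely that the identity obtained for $t$ of the form $z + 1/z$ extends to an identity of meromorphic functions in a free variable $t$, which is a welcome clarification rather than a deviation.
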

\begin{proof}
Assume \eqref{nine-00} is a rational function 
of $z$. Then \eqref{diff-psi} implies that 
\begin{equation}
\psi ( z + 1/z + 2 + x ) - \psi( z + 1/z -1 - x) = A(z)
\label{left-inv00}
\end{equation}
\noindent
with $A$ a rational function of $z$. The left-hand side of \eqref{left-inv00}
is invariant under $z \mapsto 1/z$, therefore Lemma \ref{rat-inv} shows that 
$A(z) = B( z + 1/z)$, for some rational function $B$. Now rewrite 
\eqref{left-inv00} as 
\begin{equation}
\psi(t + 2x + 3) - \psi(t) = B(t+1+x)
\end{equation}
\noindent
with $t = z + 1/z -1 -x$. Proposition \ref{shift-b} shows that $2x \in 
\mathbb{Z}$. 

To establish the converse, assume $2x \in \mathbb{Z}$. The identity 
\eqref{diff-psi} shows that 
\begin{equation}
4 \sum_{n=0}^{\infty} B_{2n+1}^{*}(x) z^{2n+1} = 
\psi( t + 2x+3) - \psi(t)
\end{equation}
\noindent
with $t = z + 1/z -1 - x$. Proposition \ref{shift-b} shows that 
$\psi(t + 2x+3) - \psi(t)$ is a rational function of $t$ and hence a 
rational function of $z$. 
\end{proof}

\begin{corollary}
Assume the sequence $\{ B_{2n+1}^{*}(x) \}$ is 
periodic. Then $2x \in \mathbb{Z}$.
\end{corollary}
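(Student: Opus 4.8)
The plan is to read this corollary off directly from Theorem~\ref{two-z}, using only the elementary periodicity lemma stated at the opening of this section; no new computation is required, and the single point demanding care is the reconciliation of the two generating-function conventions that are in play.

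First I would set $a_{n} := B_{2n+1}^{*}(x)$ and apply the periodicity lemma to the sequence $\{ a_{n} \}_{n \geq 0}$. If this sequence is periodic with some minimal period $p$, the lemma guarantees that its ordinary generating function
\begin{equation*}
A(w) = \sum_{n=0}^{\infty} a_{n} w^{n} = \sum_{n=0}^{\infty} B_{2n+1}^{*}(x) \, w^{n}
\end{equation*}
is a rational function of $w$, indeed one whose reduced denominator is $1 - w^{p}$. The only substantive input here is the characterization of periodic sequences by their generating functions, which is already available.

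Next I would connect $A(w)$ to the generating function that Theorem~\ref{two-z} actually analyzes. The series treated there is $\sum_{n=0}^{\infty} B_{2n+1}^{*}(x) z^{2n+1}$, and this is precisely $z \, A(z^{2})$. Since $A$ is rational in $w$, the composition with the polynomial $w = z^{2}$ is rational in $z$, and multiplication by $z$ preserves rationality; hence $\sum_{n=0}^{\infty} B_{2n+1}^{*}(x) z^{2n+1}$ is a rational function of $z$.

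Finally, Theorem~\ref{two-z} states that this generating function is rational in $z$ if and only if $2x \in \mathbb{Z}$, so invoking its forward implication yields the conclusion $2x \in \mathbb{Z}$. The real work has already been carried out in Theorem~\ref{two-z} and, through it, in Proposition~\ref{shift-b} via the transcendence-flavored argument on shifts of the digamma function; the deduction here is short. Accordingly, the only place warranting attention is the bookkeeping of the $w \mapsto z^{2}$ passage between the ordinary generating function $A(w)$ and the odd-indexed series $z \, A(z^{2})$, which is why I single that step out rather than the otherwise immediate appeal to the two named results.
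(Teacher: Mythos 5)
Your proposal is correct and matches the paper's own proof, which likewise observes that periodicity forces the generating function $\sum_{n=0}^{\infty} B_{2n+1}^{*}(x) z^{2n+1}$ to be rational and then invokes the forward implication of Theorem \ref{two-z}. Your explicit bookkeeping of the passage from the ordinary generating function $A(w)$ to the odd-indexed series $z\,A(z^{2})$ merely spells out a step the paper leaves implicit, and it is handled correctly.
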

\begin{proof}
The hypothesis implies that the generating function in \eqref{nine-00} is a
rational function. Theorem \ref{two-z} gives  the conclusion.
\end{proof}

\medskip

The quest for values of $x$ that produce periodic sequences $B_{2n+1}^{*}(x)$
is now reduced to the set $\mathbb{Z} \cup \left( \mathbb{Z} + 
\tfrac{1}{2} \right)$.  The symmetry given in Theorem \ref{thm-symm} implies 
that one may assume $x \leq - \tfrac{3}{2}$. 

\medskip

\subsection{Integer values of $x$} The nature of the sequence 
$\{ B_{2n+1}^{*}(x) \}$ is discussed next for $x = k \in \mathbb{Z}$. 

\begin{theorem}
\label{boddk}
Let $n \in \mathbb{N}$ and $k \geq 3$. Then
\begin{equation}
\label{value-k}
B_{2n+1}^{*}(-k) = - \frac{1}{4}U_{2n}(0) - \frac{1}{2} 
\sum_{j=1}^{k-2} U_{2n} \left( \frac{j}{2} \right).
\vm
\end{equation}
\end{theorem}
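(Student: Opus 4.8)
The plan is to reduce the value at $-k$ to the value at $0$, where $B_{2n+1}^{*}(0)$ is already known in closed form, and to account for the difference by a telescoping sum of Chebyshev polynomials. All the ingredients are in place: the shift recurrence \eqref{nice-reader}, the reflection symmetry of Theorem \ref{thm-symm}, the evaluation $B_{2n+1}^{*}(0) = \tfrac{(-1)^{n}}{4} + \tfrac{1}{2}U_{2n}(\tfrac{1}{2})$ recorded in \eqref{value-per1}, and the special value $U_{2n}(0) = (-1)^{n}$.

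First I would invoke the reflection symmetry. Applying Theorem \ref{thm-symm} with $n$ replaced by $2n+1$ gives $B_{2n+1}^{*}(-x-3) = -B_{2n+1}^{*}(x)$, and choosing $x = k-3$ (so that $-x-3 = -k$) yields
\begin{equation*}
B_{2n+1}^{*}(-k) = -B_{2n+1}^{*}(k-3).
\end{equation*}
Since $k \geq 3$, the argument $k-3$ is a nonnegative integer, so the value on the right is accessible from $B_{2n+1}^{*}(0)$ by repeatedly applying \eqref{nice-reader}. Writing that recurrence with $n \mapsto 2n+1$ and $x \mapsto i$ and telescoping over $i = 0, \dots, k-4$ produces
\begin{equation*}
B_{2n+1}^{*}(k-3) = B_{2n+1}^{*}(0) + \frac{1}{2} \sum_{i=0}^{k-4} U_{2n}\left( \frac{i}{2} + 1 \right),
\end{equation*}
with the convention that the sum is empty when $k=3$.

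The remaining work is purely bookkeeping. I would substitute the closed form of $B_{2n+1}^{*}(0)$, rewrite $\tfrac{(-1)^{n}}{4}$ as $\tfrac{1}{4}U_{2n}(0)$, and reindex the telescoped sum by $j = i+2$, so that $\tfrac{i}{2}+1 = \tfrac{j}{2}$ and $j$ runs from $2$ to $k-2$. The term $\tfrac{1}{2}U_{2n}(\tfrac{1}{2})$ coming from $B_{2n+1}^{*}(0)$ then supplies exactly the missing $j=1$ term, and combining with the overall sign from the reflection step collapses everything to the stated expression $-\tfrac{1}{4}U_{2n}(0) - \tfrac{1}{2}\sum_{j=1}^{k-2} U_{2n}(\tfrac{j}{2})$.

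There is no genuine obstacle here, only care with indices and the boundary case; the step most worth checking is that the $k=3$ instance, where the telescoped sum is empty, still matches the claim, its right-hand side reducing to $-\tfrac14 U_{2n}(0) - \tfrac12 U_{2n}(\tfrac12) = -B_{2n+1}^{*}(0)$. An equivalent route avoids the reflection symmetry altogether: iterate \eqref{nice-reader} downward directly from $0$ to $-k$ and use the evenness $U_{2n}(-x) = U_{2n}(x)$ to fold the negative arguments back. It arrives at the same identity and serves as a useful consistency check.
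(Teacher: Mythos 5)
Your proof is correct and matches the paper's in all essentials: the paper's proof simply sets $x=0$ in the iterated shift formula \eqref{nice-reader2} and then invokes \eqref{value-per1} together with $U_{2n}(0)=(-1)^{n}$, which is precisely the ``equivalent route'' you sketch in your final paragraph (downward telescoping from $0$ to $-k$, with the evenness of $U_{2n}$ folding the negative arguments back). Your primary route, which replaces that folding by the reflection symmetry $B_{2n+1}^{*}(-x-3)=-B_{2n+1}^{*}(x)$ of Theorem \ref{thm-symm}, is only a cosmetic variant since the two symmetries encode the same information here, and your index bookkeeping, including the empty-sum boundary case $k=3$, checks out.
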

\begin{proof}
This is just a special case of \eqref{nice-reader2} with $x=0$. Use 
\eqref{value-per1} and the fact that $U_{2n}(0) = (-1)^{n}$. 
\end{proof}

The next step is to show that $\{B_{2n+1}^{*}(-k) \}$ is not periodic for 
$k \geq 5$. 

\begin{lemma}
Assume $j \geq 3$. Then $U_{2n} \left( \frac{j}{2} \right) > 0$.
\end{lemma}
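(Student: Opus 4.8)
The plan is to exploit the closed form \eqref{form-utwon} for the Chebyshev polynomial of the second kind, noting that the hypothesis $j \geq 3$ places the argument strictly above $1$: indeed $x := j/2 \geq \tfrac{3}{2} > 1$, so $\sqrt{x^{2}-1}$ is real and positive and we are outside the oscillatory regime $x = \cos\theta$ where $U_{2n}$ can change sign.

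First I would set $\alpha = x + \sqrt{x^{2}-1}$ and $\beta = x - \sqrt{x^{2}-1}$. One checks immediately that $\alpha\beta = x^{2} - (x^{2}-1) = 1$, so $\beta = 1/\alpha > 0$; and since $\sqrt{x^{2}-1} > 0$ we also have $\alpha > \beta$. Thus $\alpha > \beta > 0$. With these in hand, \eqref{form-utwon} reads
\[
U_{2n}(x) = \frac{\alpha^{2n+1} - \beta^{2n+1}}{2\sqrt{x^{2}-1}}.
\]
Because $\alpha > \beta > 0$, raising to the positive power $2n+1$ preserves the inequality, so the numerator $\alpha^{2n+1} - \beta^{2n+1}$ is positive; the denominator $2\sqrt{x^{2}-1}$ is positive as well. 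Hence $U_{2n}(j/2) > 0$, which is the claim.

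This argument has essentially no obstacle: the only point requiring attention is that \eqref{form-utwon} is being applied in the range $x > 1$, where the radical is real — a condition guaranteed precisely by $j \geq 3$. Equivalently, writing $x = \cosh t$ with $t > 0$ turns \eqref{form-utwon} into $U_{2n}(\cosh t) = \sinh((2n+1)t)/\sinh t$, which is manifestly positive; I expect either route to give a one-line verification.
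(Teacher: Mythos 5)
Your proof is correct and is precisely the paper's argument: the paper's entire proof reads ``This comes directly from \eqref{form-utwon},'' and you have simply made explicit the verification that for $x = j/2 \geq \tfrac{3}{2} > 1$ both $x \pm \sqrt{x^{2}-1}$ are positive with the first exceeding the second, so numerator and denominator in \eqref{form-utwon} are positive. Your closing remark about the substitution $x = \cosh t$ is a pleasant equivalent formulation, but it is the same one-line computation.
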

\begin{proof}
This comes directly from \eqref{form-utwon}.
\end{proof}


\begin{proposition}
The sequence $\left\{ B_{2n+1}^{*}(-k) \right\}$ is not periodic for 
$k \geq 5$.
\end{proposition}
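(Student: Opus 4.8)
The plan is to reduce non-periodicity to unboundedness: any periodic sequence takes only finitely many values and is therefore bounded, so it suffices to prove that $\{B_{2n+1}^{*}(-k)\}$ is unbounded for $k \geq 5$. The natural starting point is the closed form from Theorem \ref{boddk},
\begin{equation*}
B_{2n+1}^{*}(-k) = -\frac{1}{4}U_{2n}(0) - \frac{1}{2}\sum_{j=1}^{k-2}U_{2n}\left(\frac{j}{2}\right),
\end{equation*}
valid for $k \geq 3$. The idea is that the summand of largest argument, $U_{2n}\left(\tfrac{k-2}{2}\right)$ with $\tfrac{k-2}{2}\geq\tfrac{3}{2}>1$, grows exponentially in $n$, while none of the remaining terms can cancel this growth.

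First I would extract the dominant term from \eqref{form-utwon}. Setting $x=\tfrac{k-2}{2}>1$ and $\lambda=x+\sqrt{x^{2}-1}>1$,
\begin{equation*}
U_{2n}\left(\frac{k-2}{2}\right)=\frac{\lambda^{2n+1}-\lambda^{-(2n+1)}}{2\sqrt{x^{2}-1}}\longrightarrow+\infty
\end{equation*}
as $n\to\infty$. Next I would show the other terms cannot defeat this. By the preceding Lemma, every summand $U_{2n}(j/2)$ with $j\geq 3$ is positive, and $U_{2n}(1)=2n+1>0$; the only summand that can be negative is $U_{2n}(1/2)$, which by \eqref{form-un1} equals $\tfrac{2}{\sqrt{3}}\sin((2n+1)\pi/3)$ and is bounded by an absolute constant, while $U_{2n}(0)=(-1)^{n}$ is likewise bounded. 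Hence there is a constant $C$ independent of $n$ with
\begin{equation*}
\sum_{j=1}^{k-2}U_{2n}\left(\frac{j}{2}\right)\geq U_{2n}\left(\frac{k-2}{2}\right)-C,
\end{equation*}
and therefore $B_{2n+1}^{*}(-k)\to-\infty$. The sequence is thus unbounded and cannot be periodic.

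This argument is essentially self-contained once Theorem \ref{boddk} and the positivity Lemma are in hand, so I do not expect a serious obstacle. The only delicate point is the bookkeeping that isolates the single possibly-negative term $U_{2n}(1/2)$ and bounds it uniformly; the positivity of all $j\geq 3$ terms, already established, is precisely what prevents a cancellation of the exponential growth, which is why the restriction $k\geq 5$ (equivalently $k-2\geq 3$) is exactly what the method needs.
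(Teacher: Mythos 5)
Your proof is correct and takes essentially the same approach as the paper: both reduce non-periodicity to unboundedness via the closed form of Theorem \ref{boddk}, use the positivity lemma for $U_{2n}(j/2)$ with $j \geq 3$ to rule out cancellation, bound the only possibly negative terms $U_{2n}(0)$ and $U_{2n}(1/2)$, and extract exponential growth from \eqref{form-utwon}. The only cosmetic difference is that the paper isolates $U_{2n}\left(\tfrac{3}{2}\right)$ (the $j=3$ term, guaranteed present since $k \geq 5$) as the exponential witness, while you use the top term $U_{2n}\left(\tfrac{k-2}{2}\right)$.
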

\begin{proof}
The identity \eqref{value-k} is written as 
\begin{eqnarray*}
-2B_{2n+1}^{*}(-k) & = & \frac{U_{2n}(0)}{2} + U_{2n}(1/2) + U_{2n}(1) + 
U_{2n}(3/2) + \sum_{j=4}^{k-2} U_{2n} \left( \frac{j}{2} \right) \vm \\
& \geq & 
\frac{U_{2n}(0)}{2} + U_{2n}(1/2) + U_{2n}(1) + 
U_{2n}(3/2). \vm
\end{eqnarray*}
\noindent
The value 
\begin{equation}
U_{2n} \left( \frac{3}{2} \right) = 
\frac{1}{\sqrt{5}} \left[ \left( \frac{3 + \sqrt{5}}{2} \right)^{2n+1} - 
 \left( \frac{3 - \sqrt{5}}{2} \right)^{2n+1} \right]
\label{special-un}
\vm
\end{equation}
shows that $\left\{ B_{2n+1}^{*}(-k) \right\}$ is not bounded. To obtain 
\eqref{special-un}, use $x = \tfrac{3}{2}$ in \eqref{form-utwon}.
\end{proof}

The next result shows that, after a linear modification, the case $k=-4$
produces another periodic example. 

\begin{proposition}
\label{prop-value4}
The sequence $\left\{ B_{2n+1}^{*}(-4) + n \right\}$ is $6$-periodic.
\end{proposition}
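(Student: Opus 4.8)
The plan is to start from the explicit evaluation \eqref{value-k} of Theorem \ref{boddk}, specialized to $k=4$, and to isolate the single unbounded Chebyshev term from the bounded, periodic ones. Setting $k=4$ leaves only the short sum
\[
B_{2n+1}^{*}(-4) = -\tfrac{1}{4}U_{2n}(0) - \tfrac{1}{2}U_{2n}\left(\tfrac{1}{2}\right) - \tfrac{1}{2}U_{2n}(1),
\]
so that just three Chebyshev values need to be computed.

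First I would record those three values from the definition \eqref{form-un1}: $U_{2n}(0) = (-1)^{n}$, which has period $2$ in $n$; $U_{2n}\left(\tfrac12\right) = \sin\!\big((2n+1)\pi/3\big)/\sin(\pi/3)$, obtained by taking $\theta = \pi/3$, which has period $3$ in $n$; and $U_{2n}(1) = 2n+1$, obtained from the limit $\theta \to 0$. Substituting these gives
\[
B_{2n+1}^{*}(-4) = -\tfrac{(-1)^{n}}{4} - \tfrac{1}{\sqrt3}\sin\!\Big(\tfrac{(2n+1)\pi}{3}\Big) - n - \tfrac{1}{2}.
\]
The only term depending unboundedly on $n$ is the contribution $-\tfrac12 U_{2n}(1) = -n - \tfrac12$, and adding $n$ cancels its linear part exactly.

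Then I would conclude that $B_{2n+1}^{*}(-4) + n$ is the sum of a term of period $2$, a term of period $3$, and a constant, hence periodic of period $\mathrm{lcm}(2,3)=6$. Comparing with \eqref{value-per1} makes this transparent, since the first two terms combine to $-B_{2n+1}^{*}$; in fact $B_{2n+1}^{*}(-4) + n = -B_{2n+1}^{*} - \tfrac12$, and $6$-periodicity then follows at once from Corollary \ref{period-6}.

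There is no serious obstacle; the proposition is essentially a bookkeeping consequence of \eqref{value-k}. The only point needing care is the evaluation $U_{2n}(1)=2n+1$ together with the observation that this is the unique unbounded summand, so that the added $n$ is exactly what restores periodicity. A slicker alternative avoiding \eqref{value-k} uses the reflection symmetry of Theorem \ref{thm-symm}, which for odd index gives $B_{2n+1}^{*}(-4) = -B_{2n+1}^{*}(1)$, combined with the shift formula \eqref{nice-reader} at $x=0$, namely $B_{2n+1}^{*}(1) = B_{2n+1}^{*} + \tfrac12 U_{2n}(1) = B_{2n+1}^{*} + n + \tfrac12$; this again yields $B_{2n+1}^{*}(-4) + n = -B_{2n+1}^{*} - \tfrac12$.
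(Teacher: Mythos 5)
Your proof is correct and follows essentially the same route as the paper: specializing \eqref{value-k} of Theorem \ref{boddk} to $k=4$, evaluating $U_{2n}(0)=(-1)^{n}$ (period $2$), $U_{2n}\left(\tfrac12\right)$ (period $3$), and $U_{2n}(1)=2n+1$, whose linear part is cancelled by the added $n$. Your closing identity $B_{2n+1}^{*}(-4)+n=-B_{2n+1}^{*}-\tfrac12$ and the alternative via Theorem \ref{thm-symm} and \eqref{nice-reader} are correct refinements beyond what the paper records, but the core argument is the same.
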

\begin{proof}
The value $k=4$ in \eqref{value-k} gives 
\begin{equation}
B_{2n+1}^{*}(-4) = - \frac{1}{4} U_{2n}(0) - 
\frac{1}{2}U_{2n} \left( \tfrac{1}{2} \right) - 
\frac{1}{2} U_{2n}(1).
\vm
\end{equation}
\noindent
The values $U_{2n}(0) = (-1)^{n}$ is $2$-periodic and 
\begin{equation}
U_{2n}(\tfrac{1}{2}) = \frac{2}{\sqrt{3}} \sin \left((2n+1)\pi/3 \right)
\end{equation}
is $3$-periodic (with values $0, \, -1, \, +1$). The expression 
\eqref{form-un1}, in the limit as $\theta \to 0$, gives
$U_{2n}(1) = 2n+1$. The proof is complete.
\end{proof}

\begin{corollary}
The sequence $\left\{ B_{2n+1}^{*}(-3) \right\}$ is $6$-periodic.
\end{corollary}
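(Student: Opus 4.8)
The plan is to specialize the formula \eqref{value-k} of Theorem \ref{boddk} to $k=3$ and then argue exactly as in the proof of Proposition \ref{prop-value4}. Taking $k=3$ collapses the sum $\sum_{j=1}^{k-2}U_{2n}(j/2)$ to the single term $j=1$, so that
\[
B_{2n+1}^{*}(-3) = -\tfrac14 U_{2n}(0) - \tfrac12 U_{2n}\!\left(\tfrac12\right).
\]
I would then invoke the two periodicity facts already recorded in the proof of Proposition \ref{prop-value4}: the value $U_{2n}(0) = (-1)^{n}$ is $2$-periodic, and $U_{2n}(\tfrac12) = \tfrac{2}{\sqrt{3}}\sin\!\left((2n+1)\pi/3\right)$ cycles through $0,-1,+1$ and is $3$-periodic. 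Since a linear combination of a $2$-periodic and a $3$-periodic sequence is periodic with period dividing $\mathrm{lcm}(2,3)=6$, the claim follows immediately.

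Alternatively, and even more directly, I would deduce the result from the reflection symmetry of Theorem \ref{thm-symm}. Putting $x=0$ in $B_{n}^{*}(-x-3) = (-1)^{n}B_{n}^{*}(x)$ gives $B_{2n+1}^{*}(-3) = -B_{2n+1}^{*}(0) = -B_{2n+1}^{*}$, and since Zagier's sequence $B_{2n+1}^{*}$ is $6$-periodic (Theorem \ref{zagier-per6}, reproved via Chebyshev polynomials in \eqref{value-per1}), so is its negative. This route bypasses Theorem \ref{boddk} entirely, at the cost of appealing to the reflection symmetry.

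There is essentially no obstacle to overcome here: the formula \eqref{value-k} does all the work, and the only point deserving a word is that $6$ is genuinely the period and not a proper divisor of it. This is transparent from the explicit values, since $-\tfrac14 U_{2n}(0)$ alternates between $\mp\tfrac14$ with period exactly $2$, while $-\tfrac12 U_{2n}(\tfrac12)$ runs through $0,\tfrac12,-\tfrac12$ with period exactly $3$; the coprimality of these two periods forces the combined period to be exactly $6$. Equivalently, one reads off the six values $\{-\tfrac34,\tfrac14,\tfrac14,-\tfrac14,-\tfrac14,\tfrac34\}$ directly as the negatives of Zagier's list.
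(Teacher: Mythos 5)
Your first route is exactly the paper's proof: set $k=3$ in \eqref{value-k} of Theorem \ref{boddk} and invoke the $2$-periodicity of $U_{2n}(0)$ and $3$-periodicity of $U_{2n}\left(\tfrac{1}{2}\right)$ already noted in Proposition \ref{prop-value4}, so the proposal is correct and essentially identical to the paper's argument. Your alternative via the symmetry $B_{2n+1}^{*}(-3)=-B_{2n+1}^{*}(0)$ from Theorem \ref{thm-symm} is a valid shortcut, and your observation that the coprime component periods force the exact period $6$ is a small refinement the paper does not spell out.
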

\begin{proof}
Choose $k = 3$ in \eqref{value-k} to obtain 
\begin{equation}
B_{2n+1}^{*}(-3) = - \frac{1}{4}U_{2n}(0) - \frac{1}{2}U_{2n} \left( 
\frac{1}{2} \right).
\vm
\end{equation}
\noindent
As in Proposition \ref{prop-value4}, $U_{2n}(0)$ is of period $2$ and 
$U_{2n} \left( \tfrac{1}{2} \right)$ is of period $3$. 
\end{proof}

\begin{proposition}
\label{value-2}
The sequence $\{ B_{2n+1}^{*}(-2) \}$
is $2$-periodic:
\begin{equation}
B_{2n+1}^{*}(-2) = \frac{(-1)^{n+1}}{4}.
\vm
\end{equation}
\end{proposition}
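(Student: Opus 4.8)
The plan is to reduce the $k=2$ case to already-established formulas rather than redoing the Chebyshev analysis from scratch. The key observation is that equation \eqref{value-k} in Theorem \ref{boddk} was obtained as the special case $x=0$ of the shift relation \eqref{nice-reader2}, and that same derivation makes sense for $k=2$; in that range the sum $\sum_{j=1}^{k-2}U_{2n}(j/2)$ is simply empty. So I would begin by applying \eqref{nice-reader2} directly with $x=0$, with the index $n$ replaced by $2n+1$ (so that $U_{n-1}$ becomes $U_{2n}$), and with $k=2$. This yields
\begin{equation*}
B_{2n+1}^{*}(0) - B_{2n+1}^{*}(-2) = \frac{1}{2}\sum_{j=1}^{2} U_{2n}\left(\frac{-j}{2}+1\right) = \frac{1}{2}\left[U_{2n}\left(\tfrac{1}{2}\right) + U_{2n}(0)\right].
\end{equation*}

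Next I would substitute the known evaluation of $B_{2n+1}^{*}(0)$. From \eqref{value-per1} we have $B_{2n+1}^{*} = B_{2n+1}^{*}(0) = \tfrac{(-1)^{n}}{4} + \tfrac{1}{2}U_{2n}(\tfrac{1}{2})$. Inserting this into the previous display, the two copies of $\tfrac{1}{2}U_{2n}(\tfrac{1}{2})$ cancel, leaving the clean expression
\begin{equation*}
B_{2n+1}^{*}(-2) = \frac{(-1)^{n}}{4} - \frac{1}{2}U_{2n}(0).
\end{equation*}
Finally I would invoke the value $U_{2n}(0) = (-1)^{n}$, already used in the preceding corollaries, to obtain $B_{2n+1}^{*}(-2) = \tfrac{(-1)^{n}}{4} - \tfrac{(-1)^{n}}{2} = -\tfrac{(-1)^{n}}{4} = \tfrac{(-1)^{n+1}}{4}$, which is exactly the claimed $2$-periodic formula.

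I do not anticipate any genuine obstacle here; the proof is a short combination of results already in hand. The only points requiring care are bookkeeping ones: keeping the index shift $n \mapsto 2n+1$ straight in \eqref{nice-reader2} so that the Chebyshev subscript is correctly $2n$, and verifying that the $U_{2n}(\tfrac{1}{2})$ terms truly cancel rather than doubling. An even slicker alternative, if one prefers, is simply to note that formula \eqref{value-k} of Theorem \ref{boddk} remains valid at $k=2$ with the convention that the empty sum vanishes, giving $B_{2n+1}^{*}(-2) = -\tfrac{1}{4}U_{2n}(0) = \tfrac{(-1)^{n+1}}{4}$ immediately; the derivation above just makes that extension explicit.
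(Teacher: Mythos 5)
Your proof is correct and follows essentially the same route as the paper, whose entire argument is ``Let $k=2$ in Theorem \ref{boddk}'' — i.e.\ exactly the ``slicker alternative'' you note at the end, extending \eqref{value-k} to $k=2$ with an empty sum. Your explicit rederivation from \eqref{nice-reader2} and \eqref{value-per1}, with the cancellation of the $U_{2n}\bigl(\tfrac{1}{2}\bigr)$ terms, is just the paper's proof of Theorem \ref{boddk} specialized to $k=2$, and it has the minor virtue of justifying the extension beyond the stated range $k\geq 3$.
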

\begin{proof}
Let $k=2$ in Theorem \ref{boddk}.
\end{proof}

The rest of the integer values $x$ are obtained by the symmetry rule 
given in Theorem \ref{thm-symm}. The study of the 
structure of the sequences $B_{2n+1}^{*}(k)$ has been completed. The 
details are summarized in the next statement.

\begin{theorem}
\label{thm-structure1}
Let $k \in \mathbb{Z}$. Then 

\noindent
a) $\{ B_{2n+1}^{*}(k) \}$ is exponentially unbounded 
if $k \geq 2$ or $k \leq -5$;

\noindent
b) $ \{B_{2n+1}^{*}(k) + n \}$ is $6$-periodic for $k=-4$ or $k=1$;

\noindent
c) $ \{ B_{2n+1}^{*}(k) \}$ is $6$-periodic if $k=-3$ or $k=0$;

\noindent
d) $ \{ B_{2n+1}^{*}(k) \}$ is $2$-periodic if $k=-2$ or $k=-1$.
\end{theorem}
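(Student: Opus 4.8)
This theorem collects the case analysis of the present section into a single statement, so the plan is simply to cite the individual propositions for $k \le 0$ and then to propagate them to $k \ge 1$ through the reflection symmetry. Writing the odd-index instance of Theorem \ref{thm-symm} as
\begin{equation*}
B_{2n+1}^{*}(k) = - B_{2n+1}^{*}(-k-3),
\end{equation*}
the map $k \mapsto -k-3$ pairs $k \ge 2$ with $-k-3 \le -5$, pairs $k=1$ with $k=-4$, pairs $k=0$ with $k=-3$, and pairs $k=-1$ with $k=-2$. The two structural facts I would use are that negating a sequence preserves both its minimal period and the property of being exponentially unbounded.

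For $k \le 0$ every assertion is already in hand. Part (a) at $k \le -5$ is the proposition preceding Proposition \ref{prop-value4}: its proof bounds $-2B_{2n+1}^{*}(-k)$ below by $U_{2n}(\tfrac{3}{2})$, which by \eqref{form-utwon} grows like $\bigl((3+\sqrt{5})/2\bigr)^{2n+1}$, so the sequence is exponentially unbounded and not merely aperiodic. Part (b) at $k=-4$ is Proposition \ref{prop-value4}, where $U_{2n}(0)$ supplies a $2$-periodic term, $U_{2n}(\tfrac{1}{2})$ a $3$-periodic term, and $U_{2n}(1)=2n+1$ the linear term cancelled by the added $+n$. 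Part (c) at $k=-3$ is the corollary situated between Propositions \ref{prop-value4} and \ref{value-2}, where only the $2$- and $3$-periodic contributions survive. Part (d) at $k=-2$ is Proposition \ref{value-2}, giving the $2$-periodic value $(-1)^{n+1}/4$.

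I would then transport these across the symmetry. For part (a) the pairing $k \ge 2 \leftrightarrow -k-3 \le -5$ gives exponential unboundedness at once, since negation preserves it. For part (c) the identity $B_{2n+1}^{*}(0) = -B_{2n+1}^{*}(-3)$ recovers Zagier's $6$-periodicity, consistent with \eqref{value-per1}. For part (d) the identity $B_{2n+1}^{*}(-1) = -B_{2n+1}^{*}(-2) = (-1)^{n}/4$ is $2$-periodic. The delicate case is $k=1$, paired with $k=-4$: here $B_{2n+1}^{*}(1) = -B_{2n+1}^{*}(-4)$, so
\begin{equation*}
B_{2n+1}^{*}(1) - n = - \bigl( B_{2n+1}^{*}(-4) + n \bigr),
\end{equation*}
which is $6$-periodic by Proposition \ref{prop-value4}. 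The same conclusion follows directly from \eqref{nice-reader} with $x=0$ together with $U_{2n}(1) = 2n+1$, namely $B_{2n+1}^{*}(1) = B_{2n+1}^{*}(0) + n + \tfrac{1}{2}$.

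The single genuine obstacle is the sign of the linear correction at $k=1$. Because the reflection $k \mapsto -k-3$ carries a negation, the correction that restores periodicity reverses sign relative to the partner value $k=-4$: while $\{B_{2n+1}^{*}(-4)+n\}$ is $6$-periodic, at $k=1$ it is $\{B_{2n+1}^{*}(1)-n\}$ that is $6$-periodic, and I would state part (b) at $k=1$ in this corrected form. With this one adjustment recorded, every remaining assertion is an immediate citation or a direct application of the symmetry, and no further computation is required.
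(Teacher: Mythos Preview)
Your approach is exactly the paper's own: the theorem is presented there as a summary, with the remark that ``the rest of the integer values $x$ are obtained by the symmetry rule given in Theorem~\ref{thm-symm}'' standing in for a proof. You have simply made that passage explicit, citing the propositions for $k\le 0$ and transporting them across $k\mapsto -k-3$.

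More importantly, you have caught a genuine sign error in the statement of part (b) as printed. The symmetry gives $B_{2n+1}^{*}(1)=-B_{2n+1}^{*}(-4)$, so it is $\{B_{2n+1}^{*}(1)-n\}$ that is $6$-periodic, not $\{B_{2n+1}^{*}(1)+n\}$. Your two independent derivations---via the reflection and via \eqref{nice-reader} with $U_{2n}(1)=2n+1$---both confirm this, and the numerical data in Section~\ref{sec-arithmetic} (where $B_{1}^{*}(1),B_{3}^{*}(1),B_{5}^{*}(1),B_{7}^{*}(1),B_{9}^{*}(1)=\tfrac{5}{4},\tfrac{5}{4},\tfrac{9}{4},\tfrac{15}{4},\tfrac{19}{4}$) settle it beyond doubt: subtracting $n$ gives $\tfrac{5}{4},\tfrac{1}{4},\tfrac{1}{4},\tfrac{3}{4},\tfrac{3}{4}$, matching $B_{2n+1}^{*}(0)+\tfrac{1}{2}$, while adding $n$ is visibly unbounded. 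Your corrected formulation of part (b) should be adopted.
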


\medskip

\subsection{Values of $x \in \tfrac{1}{2} + \mathbb{Z}$} The 
example $x = - \tfrac{3}{2}$ is considered first.

\begin{proposition}
For $n \in \mathbb{N} \cup \{ 0 \}$,
\begin{equation}
B_{2n+1}^{*} \left( - \tfrac{3}{2} \right) = 0.
\end{equation}
\end{proposition}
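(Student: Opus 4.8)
The plan is to observe that $x = -\tfrac{3}{2}$ is exactly the fixed point of the involution $x \mapsto -x-3$ that governs the symmetry of the Zagier polynomials. Indeed, substituting $x = -\tfrac{3}{2}$ into $-x-3$ returns $-\left(-\tfrac{3}{2}\right)-3 = \tfrac{3}{2}-3 = -\tfrac{3}{2}$, so the reflection symmetry degenerates into a statement about $B_n^{*}$ at a single point. First I would invoke Theorem \ref{thm-symm}, namely $B_n^{*}(-x-3) = (-1)^n B_n^{*}(x)$, and evaluate it at $x = -\tfrac{3}{2}$.

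With $n$ replaced by the odd index $2n+1$ this yields
\begin{equation*}
B_{2n+1}^{*}\left(-\tfrac{3}{2}\right) = B_{2n+1}^{*}\left(-\left(-\tfrac{3}{2}\right)-3\right) = (-1)^{2n+1} B_{2n+1}^{*}\left(-\tfrac{3}{2}\right) = - B_{2n+1}^{*}\left(-\tfrac{3}{2}\right),
\end{equation*}
so that $2 B_{2n+1}^{*}\left(-\tfrac{3}{2}\right) = 0$, which forces $B_{2n+1}^{*}\left(-\tfrac{3}{2}\right) = 0$ for every $n \geq 0$. There is no genuine obstacle here: the entire content lies in recognizing that the center of symmetry $x = -\tfrac{3}{2}$ is the odd-order analogue of the classical vanishing $B_{2n+1} = 0$ at the center $x = \tfrac{1}{2}$ of the ordinary Bernoulli polynomials.

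As an independent check, and to make the argument self-contained, I would also verify the claim directly from the generating function \eqref{diff-psi}. Setting $x = -\tfrac{3}{2}$ gives $2 + x = \tfrac{1}{2}$ and $-1 - x = \tfrac{1}{2}$, so both digamma arguments collapse to $z + 1/z + \tfrac{1}{2}$ and the right-hand side
\begin{equation*}
\frac{1}{4}\,\psi\!\left(z + 1/z + 2 + x\right) - \frac{1}{4}\,\psi\!\left(z + 1/z - 1 - x\right)
\end{equation*}
vanishes identically in $z$. Hence the generating function $\sum_{n=0}^{\infty} B_{2n+1}^{*}\left(-\tfrac{3}{2}\right) z^{2n+1}$ is the zero function, and equating coefficients of $z^{2n+1}$ recovers the stated vanishing. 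Either route is immediate, so the main point to record is simply that $-\tfrac{3}{2}$ is the unique fixed point of the symmetry and therefore the place where the odd subsequence must vanish.
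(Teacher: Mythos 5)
Your main argument is exactly the paper's proof: invoke Theorem \ref{thm-symm} at the fixed point $x=-\tfrac{3}{2}$ of the involution $x\mapsto -x-3$, so that $B_{2n+1}^{*}(-\tfrac{3}{2})=-B_{2n+1}^{*}(-\tfrac{3}{2})$ forces the vanishing. Your supplementary check via \eqref{diff-psi}, where both digamma arguments collapse to $z+1/z+\tfrac{1}{2}$, is also correct, but it adds nothing beyond the one-line symmetry argument the paper already gives.
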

\begin{proof}
Theorem \ref{thm-symm} states that $B_{2n+1}^{*}(-x-3) = - B_{2n+1}^{*}(x)$. 
Replacing $x = - \tfrac{3}{2}$ gives the result.
\end{proof}

The symmetry of $B_{2n+1}^{*}(x)$ about $x = - \tfrac{3}{2}$ shows that it 
suffices to consider values of the form $k + \tfrac{1}{2}$ for $k \geq -1$. 

\begin{theorem}
\label{thm-khalf}
For all $k \geq -1$,
\begin{equation}
B_{2n+1}^{*} \left( k + \tfrac{1}{2} \right) = 
\frac{1}{2} \sum_{r=0}^{k+1} U_{2n} \left( \frac{2r+1}{4} \right).
\label{sum-khalf}
\vm
\end{equation}
\end{theorem}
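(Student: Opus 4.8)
The plan is to obtain \eqref{sum-khalf} by telescoping the one-step shift identity \eqref{nice-reader} down from the already known value at $x=\tfrac12$. Specializing \eqref{nice-reader2} to the odd index $2n+1$ (so that $U_{n-1}$ becomes $U_{2n}$) and taking $x=k+\tfrac12$ with shift amount equal to $k$ gives
$$B_{2n+1}^{*}\!\left(k+\tfrac12\right)-B_{2n+1}^{*}\!\left(\tfrac12\right)=\frac12\sum_{j=1}^{k}U_{2n}\!\left(\frac{(k+\tfrac12)-j}{2}+1\right).$$
Here the argument simplifies to $\tfrac{2(k-j)+5}{4}$, so the substitution $r=k+2-j$ rewrites the right-hand side as $\tfrac12\sum_{r=2}^{k+1}U_{2n}\!\left(\tfrac{2r+1}{4}\right)$.

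First I would record the base value. Corollary \ref{coro-twoder} states $2B_{2n+1}^{*}(\tfrac12)=U_{2n}(\tfrac14)+U_{2n}(\tfrac34)$, which is exactly $\tfrac12$ times the $r=0$ and $r=1$ terms of the target sum. Adding this to the telescoped contribution of the $r=2,\dots,k+1$ terms reassembles the full sum $\tfrac12\sum_{r=0}^{k+1}U_{2n}\!\left(\tfrac{2r+1}{4}\right)$, proving \eqref{sum-khalf} for every $k\ge 0$. Equivalently one may run an induction on $k$, the inductive step being a single application of \eqref{nice-reader} with $x=k+\tfrac12$, whose new term $\tfrac12 U_{2n}(\tfrac{2k+5}{4})$ is precisely the $r=k+2$ term of the sum for $k+1$.

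The remaining case $k=-1$ I would dispatch by one backward application of \eqref{nice-reader}, which yields $B_{2n+1}^{*}(-\tfrac12)=B_{2n+1}^{*}(\tfrac12)-\tfrac12U_{2n}(\tfrac34)=\tfrac12U_{2n}(\tfrac14)$, agreeing with \eqref{sum-khalf} at $k=-1$; alternatively this follows from the reflection symmetry of Theorem \ref{thm-symm}. The computation is routine throughout, and I expect the only point demanding care to be the index bookkeeping — verifying that the shifted arguments $\tfrac{2(k-j)+5}{4}$ land exactly on the values $\tfrac{2r+1}{4}$ for $r=2,\dots,k+1$ and that the base case supplies precisely the missing $r=0,1$ terms, with no overlap or gap.
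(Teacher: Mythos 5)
Your proof is correct and is essentially the argument the paper leaves implicit: it telescopes the shift identity \eqref{nice-reader2} (specialized to odd index, so that $U_{n-1}$ becomes $U_{2n}$) from a known anchor value, which is exactly the method of Theorem \ref{boddk} that the paper's one-line proof invokes, and your index bookkeeping checks out. The only cosmetic difference is the choice of anchor: telescoping $k+2$ steps down to $x=-\tfrac{3}{2}$, where $B_{2n+1}^{*}\left(-\tfrac{3}{2}\right)=0$ by the proposition immediately preceding the theorem, yields all cases $k\geq -1$ in one stroke and removes your separate treatment of $k=-1$ (whose parenthetical appeal to Theorem \ref{thm-symm} alone would not suffice as stated, since the symmetry relates $-\tfrac{1}{2}$ to $-\tfrac{5}{2}$ rather than determining the value outright, though your primary route via a backward application of \eqref{nice-reader} is sound).
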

\begin{proof}
The proof is similar to that of Theorem \ref{boddk}, so the details are 
omitted.
\end{proof}

The next lemma produces an unbounded value in the sum \eqref{sum-khalf} when 
$ k \geq 1$.

\begin{lemma}
\label{fivequarter}
For $n \in \mathbb{N}$
\begin{equation}
U_{2n} \left( \tfrac{5}{4} \right) = 
\frac{2^{2n+2} - 2^{-2n}}{3}.
\vm
\end{equation}
\end{lemma}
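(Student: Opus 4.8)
The plan is to apply the explicit closed form \eqref{form-utwon} for the Chebyshev polynomial of the second kind directly, exactly as was done to obtain \eqref{special-un} in the preceding proposition. First I would specialize \eqref{form-utwon} at the argument $x = \tfrac{5}{4}$ and with index $2n$, so that the exponent appearing is $2n+1$. The only quantity that needs to be computed is $\sqrt{x^{2}-1}$: here $x^{2} = \tfrac{25}{16}$, hence $x^{2}-1 = \tfrac{9}{16}$ and $\sqrt{x^{2}-1} = \tfrac{3}{4}$.

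The key simplification is that both of the base expressions become clean dyadic rationals. Indeed,
\begin{equation*}
x + \sqrt{x^{2}-1} = \tfrac{5}{4} + \tfrac{3}{4} = 2
\qquad\text{and}\qquad
x - \sqrt{x^{2}-1} = \tfrac{5}{4} - \tfrac{3}{4} = \tfrac{1}{2}.
\end{equation*}
Substituting these into \eqref{form-utwon} gives
\begin{equation*}
U_{2n}\left(\tfrac{5}{4}\right)
= \frac{2^{2n+1} - \left(\tfrac{1}{2}\right)^{2n+1}}{2\cdot \tfrac{3}{4}}
= \frac{2}{3}\left( 2^{2n+1} - 2^{-(2n+1)} \right),
\end{equation*}
and distributing the factor $\tfrac{2}{3}$ yields $\tfrac{1}{3}\left(2^{2n+2} - 2^{-2n}\right)$, which is the claimed value.

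There is no real obstacle to overcome: the lemma is a one-line substitution, and the whole content lies in the fortunate arithmetic that $\tfrac{5}{4}\pm\tfrac{3}{4}$ produces the reciprocal pair $\{2, \tfrac{1}{2}\}$, so that the two powers collapse to $2^{2n+1}$ and $2^{-(2n+1)}$. The purpose of isolating this computation as a separate lemma is presumably that the resulting exponential growth of $U_{2n}(\tfrac{5}{4})$ feeds into the argument (as with \eqref{special-un} for $x=\tfrac{3}{2}$) showing that the sum \eqref{sum-khalf} is unbounded once $k \geq 1$, thereby ruling out periodicity for those half-integer values of $x$.
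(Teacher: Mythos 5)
Your proof is correct and follows exactly the paper's approach: the paper's entire proof is the remark that the identity ``comes directly from \eqref{form-utwon},'' and your computation (with $\sqrt{x^{2}-1}=\tfrac{3}{4}$ giving the reciprocal bases $2$ and $\tfrac{1}{2}$, exponent $2n+1$, and denominator $\tfrac{3}{2}$) is precisely the substitution the authors leave implicit. Your closing remark about the lemma's role in showing \eqref{sum-khalf} is unbounded for $k \geq 1$ also matches the paper's stated purpose.
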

\begin{proof}
This comes directly from \eqref{form-utwon}.
\end{proof}

The next examples deal with values of $B_{2n+1}^{*}(k + \tfrac{1}{2})$ that 
do not contain the unbounded term $U_{2n} \left( \tfrac{5}{4} \right)$. 

\begin{lemma}
The sequence $B_{2n+1}^{*} \left( - \tfrac{1}{2} \right)$ is not periodic.
\end{lemma}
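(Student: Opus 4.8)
The plan is to collapse the statement to the non-periodicity of a single Chebyshev sequence and then to reduce that to a classical irrationality fact. First I would apply Theorem \ref{thm-khalf} with $k = -1$: the upper index of the sum becomes $k+1 = 0$, so only the $r = 0$ term survives and
\begin{equation*}
B_{2n+1}^{*}\left( -\tfrac{1}{2} \right) = \tfrac{1}{2}\, U_{2n}\!\left( \tfrac{1}{4} \right).
\end{equation*}
What makes this case genuinely subtler than those of Lemma \ref{fivequarter} and its consequences is that $\abs{\tfrac14} < 1$, so $\{U_{2n}(\tfrac14)\}$ is \emph{bounded}; non-periodicity can no longer be read off from unboundedness, and the content of the lemma is precisely that a bounded sequence may still fail to be periodic.

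To exploit the periodicity criterion stated at the opening of Section \ref{sec-periodic}, I would compute the ordinary generating function of the sequence. Substituting $t^{2}\mapsto z$ in the even generating function \eqref{gf-uneven} at $x = \tfrac14$ (so that $1 - 2x^{2} = \tfrac78$) gives
\begin{equation*}
\sum_{n=0}^{\infty} B_{2n+1}^{*}\!\left( -\tfrac{1}{2} \right) z^{n}
= \frac{1}{2}\cdot\frac{1 + z}{1 + \tfrac{7}{4}\,z + z^{2}}.
\end{equation*}
The numerator vanishes only at $z = -1$, which is not a zero of the denominator, so this is already in reduced form. By the criterion, the sequence is periodic if and only if the denominator is, up to a constant, of the form $1 - z^{p}$, i.e. if and only if both of its roots are roots of unity.

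Those roots are $z = \tfrac{-7 \pm i\sqrt{15}}{8}$, of modulus $\sqrt{(49+15)/64} = 1$; thus $z = e^{\pm i\phi}$ with $\cos\phi = -\tfrac78$. The hard part — and the only non-elementary ingredient — is to show these are not roots of unity, i.e. that $\phi$ is not a rational multiple of $\pi$. This is exactly Niven's theorem: the only rational values of $\cos(r\pi)$ with $r \in \mathbb{Q}$ are $0, \pm\tfrac12, \pm 1$, and $-\tfrac78$ is not among them. Hence the denominator is not of the form $1 - z^{p}$ and the sequence is not periodic. The same conclusion follows more directly from the trigonometric form $U_{2n}(\tfrac14) = \sin\!\big((2n+1)\theta\big)/\sin\theta$ with $\theta = \arccos\tfrac14$: periodicity of index $p$ would force $2p\theta \in 2\pi\mathbb{Z}$, again making $\theta/\pi$ rational, which Niven's theorem forbids since $\cos\theta = \tfrac14 \notin \{0,\pm\tfrac12,\pm1\}$.
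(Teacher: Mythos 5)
Your proof is correct, but after the shared first step it takes a genuinely different route from the paper's. Both arguments begin with Theorem \ref{thm-khalf} at $k=-1$, reducing the claim to the non-periodicity of $\tfrac{1}{2}U_{2n}\left(\tfrac{1}{4}\right)$. The paper then stays trigonometric: via \eqref{form-un1} it writes $B_{2n+1}^{*}\left(-\tfrac{1}{2}\right) = \tfrac{2}{\sqrt{15}}\sin\left((2n+1)\theta\right)$ with $\cos\theta = \tfrac{1}{4}$, notes that a period $p$ would force $\tan\left((2n+1)\theta\right) = \cot(p\theta)$ for all $n$, hence (since the left side varies with $n$) $2\theta \in \pi\mathbb{Z}$, which is incompatible with $\cos\theta = \tfrac{1}{4}$ --- an elementary argument requiring no irrationality input in the generic case. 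You instead compute the ordinary generating function from \eqref{gf-uneven}, locate its poles at $e^{\pm i\phi}$ with $\cos\phi = -\tfrac{7}{8}$, and invoke Niven's theorem to show these are not roots of unity; this is exactly the strategy the paper reserves for the \emph{next} lemma, on $B_{2n+1}^{*}\left(\tfrac{1}{2}\right)$, where it proves the needed instance of Niven ad hoc through the algebraic-integer fact that $2\cos(\pi m/n)$ is a root of the monic polynomial $2T_{n}(x/2)$. Your route buys uniformity (the identical argument disposes of both half-integer cases), and it quietly covers a degenerate case the paper's $\tan$/$\cot$ manipulation passes over: when $\sin(p\theta)=0$ the paper's displayed identity is vacuous, and precisely there one needs the Niven-type input your method supplies; the paper's route buys elementarity. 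Two small cautions. First, your ``if and only if'' reading of the periodicity criterion is slightly too strong --- the reduced denominator of a periodic sequence need only \emph{divide} $1-z^{p}$ (e.g.\ $1/(1+z)$ for the sequence $(-1)^{n}$) --- but you use only the necessity direction, that periodicity forces all poles to be roots of unity, which is valid. Second, your closing remark that periodicity ``forces $2p\theta \in 2\pi\mathbb{Z}$'' needs a short case analysis: the identity $2\cos\left((2n+1+p)\theta\right)\sin(p\theta)=0$ for all $n$ could alternatively hold with $\cos\left((2n+1+p)\theta\right)=0$ for all $n$, which must be (easily) excluded by comparing consecutive $n$; your main generating-function argument does not depend on this remark, so the proof stands as written.
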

\begin{proof}
Theorem \ref{thm-khalf}, with $k = -1$,  and \eqref{form-un1} give
\begin{equation*}
B_{2n+1}^{*} \left( - \tfrac{1}{2} \right)  =   \tfrac{1}{2}
U_{2n} \left( \tfrac{1}{4} \right) 
 =  \frac{2}{\sqrt{15}} \sin \left( (2n+1) \theta \right),
\end{equation*}
\noindent
with $\cos \theta = \tfrac{1}{4}$. It follows from here that 
$\{ B_{2n+1}^{*}(-\tfrac{1}{2}) \}$ is not periodic. Indeed, if $p$ were a 
period, then 
$B_{2n+2p+1}^{*} \left( - \tfrac{1}{2} \right)  = 
B_{2n+1}^{*} \left( - \tfrac{1}{2} \right)$ implies 
\begin{equation}
\tan ( (2n+1) \theta ) = \cot p \theta \text{ for all } n \in \mathbb{N}.
\end{equation}
\noindent
Thus $3 \theta $ and $\theta$ must differ by an integer multiple 
of $\pi$; that is $2 \theta = \pi m$. This is impossible if $\cos \theta = 
\tfrac{1}{4}$. 
\end{proof}

\begin{lemma}
The sequence $B_{2n+1}^{*} \left(  \tfrac{1}{2} \right)$ is not periodic.
\end{lemma}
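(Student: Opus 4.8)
The plan is to analyze the generating function of $\{B_{2n+1}^{*}(\tfrac12)\}$ directly and apply the generating-function criterion for periodicity stated at the beginning of this section. Since $2x = 1 \in \mathbb{Z}$, Theorem~\ref{two-z} already guarantees that this generating function is rational, so the task is to exhibit it explicitly and read off its poles. Specializing \eqref{diff-psi} to $x = \tfrac12$ gives
\[
\sum_{n=0}^{\infty} B_{2n+1}^{*}(\tfrac12)\, z^{2n+1} = \frac14\left[\psi\left(z+1/z+\tfrac52\right) - \psi\left(z+1/z-\tfrac32\right)\right],
\]
and because the two arguments differ by the integer $4$, the reduction \eqref{usual-reduction} with $t = z+1/z-\tfrac32$ collapses this to the rational function
\[
\frac14\sum_{c}\frac{1}{z+1/z+c}, \qquad c \in \left\{-\tfrac32,\,-\tfrac12,\,\tfrac12,\,\tfrac32\right\}.
\]

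Next I would locate the poles. Each summand $\tfrac{1}{z+1/z+c}$, rewritten as $\tfrac{z}{z^{2}+cz+1}$, contributes poles at the roots of $z^{2}+cz+1=0$; since $\abs{c}<2$ these roots are complex conjugates with product $1$, hence lie on the unit circle. The four values of $c$ are distinct, and a root of $z^{2}+cz+1$ for one value of $c$ cannot be a root for another (subtracting the two quadratics forces $z=0$, which is never a root), so the four singular sets are disjoint. Consequently no cancellation occurs among the summands, and each such point is a genuine simple pole of the full generating function.

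The crux of the argument, and the step I expect to carry the real content, is to show that none of these poles is a root of unity, so that the reduced denominator cannot equal $1-z^{p}$ for any $p$. Here I would use that every root of unity is an algebraic integer, being a zero of the monic integer polynomial $z^{N}-1$. A pole $z_{0}$ above satisfies the \emph{monic} quadratic $z^{2}+cz+1=0$ with $c\notin\mathbb{Z}$; since this quadratic is irreducible over $\mathbb{Q}$ (its discriminant $c^{2}-4$ is negative), it is the minimal polynomial of $z_{0}$, and its non-integral coefficient shows that $z_{0}$ is \emph{not} an algebraic integer. Hence $z_{0}$ is not a root of unity. By the periodicity criterion, a periodic sequence would force the reduced denominator to be $1-z^{p}$, all of whose roots are roots of unity; the presence of $z_{0}$ contradicts this, and therefore $\{B_{2n+1}^{*}(\tfrac12)\}$ is not periodic.

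An equivalent route, closer in spirit to the preceding lemma, starts from Theorem~\ref{thm-khalf} (or Corollary~\ref{coro-twoder}) to write $2B_{2n+1}^{*}(\tfrac12) = U_{2n}(\tfrac14)+U_{2n}(\tfrac34)$ and uses \eqref{form-un1} to express this as a combination of the four exponentials $e^{\pm 2in\theta_{1}}$, $e^{\pm 2in\theta_{2}}$ with $\cos\theta_{1}=\tfrac14$ and $\cos\theta_{2}=\tfrac34$; periodicity would then force all four bases to be roots of unity, which fails by the same algebraic-integer argument. I would present the generating-function version as the main proof, since it reuses the machinery of this section most directly and sidesteps the need to argue that the two oscillating terms cannot conspire to a periodic sum.
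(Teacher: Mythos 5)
Your proof is correct, and it ends in the same contradiction as the paper's — a pole of a rational generating function that is not a root of unity, detected via algebraic integrality — but the route to that pole is genuinely different. The paper specializes Theorem \ref{thm-khalf} at $k=0$ to get $2B_{2n+1}^{*}(\tfrac{1}{2}) = U_{2n}(\tfrac{1}{4})+U_{2n}(\tfrac{3}{4})$, uses the even-part Chebyshev generating function \eqref{gf-uneven} to produce the explicit quartic \eqref{Y}, isolates the pole $t_{0}=(1+3\sqrt{7}\,i)/8$ with argument $\cos^{-1}(\tfrac{1}{8})$, and rules out rational multiples of $\pi$ by the lemma that $\omega_{m,n}=2\cos(\pi m/n)$ is a root of the monic polynomial $2T_{n}(x/2)$, hence an algebraic integer, so the rational value $\tfrac{1}{4}$ would have to be an integer. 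You instead specialize \eqref{diff-psi} at $x=\tfrac{1}{2}$ and collapse the digamma difference with \eqref{usual-reduction} into four terms $\tfrac{z}{z^{2}+cz+1}$ with half-integer $c$, and then apply integrality to the pole itself: its monic minimal polynomial $z^{2}+cz+1$ has a non-integral coefficient, so the pole is not an algebraic integer and cannot be a root of unity. The two integrality arguments are equivalent in substance (your $z_{0}+z_{0}^{-1}=-c$ is exactly $-2\cos$ of the pole's argument), but your packaging buys real simplifications: no Chebyshev input, no quartic computation, and built-in uniformity — the same reduction handles every $x\in\tfrac{1}{2}+\mathbb{Z}$ at once, since all resulting quadratics have half-integer $c$, with $\abs{c}<2$ giving unit-circle poles that are never roots of unity and $\abs{c}>2$ giving real poles off the unit circle, consonant with the unbounded growth recorded in Lemma \ref{fivequarter}. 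Your care over the two points that actually need checking — that the four pole sets are pairwise disjoint so no cancellation can remove a pole, and that each quadratic is irreducible so it really is the minimal polynomial — is exactly right. One step you elide (as does the paper) is that the periodicity criterion is stated for $\sum a_{n}z^{n}$ while your series is $zA(z^{2})$; but periodicity of $\{a_{n}\}$ with period $p$ forces every pole of $zA(z^{2})$ to satisfy $z^{2p}=1$, so the root-of-unity conclusion persists and the argument is complete.
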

\begin{proof}
In the case $k=0$, Theorem \ref{thm-khalf} gives 
\begin{equation}
B_{2n+1}^{*} \left( \tfrac{1}{2} \right)  =   \tfrac{1}{2} \left[ 
U_{2n} \left( \tfrac{1}{4} \right) + U_{2n} \left( \tfrac{3}{4}  \right) 
\right]. 
\vm
\end{equation}
\noindent
To check that this is not a periodic sequence, use \eqref{gf-uneven} to
produce 
\begin{equation}
\sum_{n=0}^{\infty} \left[ U_{2n} \left( \tfrac{1}{4} \right) + 
U_{2n} \left( \tfrac{3}{4} \right) \right] t^{n} = 
\frac{8(1+t)(4t^2+3t+4)}{16t^4 + 24t^3+25t^2+ 24t + 16}.
\label{Y}
\end{equation}
\noindent
Periodicity of $B_{2n+1}^{*} \left( \tfrac{1}{2} \right)$ implies that 
the poles of of the right-hand side in \eqref{Y} must be roots 
of a polynomial of the form $1 - t^{p}$. In particular, the arguments of these 
poles must be  rational multiples of $\pi$. One of these poles is 
$t_{0} = (1+ 3 \sqrt{7}i)/8$, with argument
$\alpha = \cos^{-1} \left( \tfrac{1}{8} \right)$.  Therefore
$\alpha$ must be a rational multiple of $\pi$. To obtain a contradiction, 
observe that 
\begin{equation}
\omega_{m,n}:= 2 \cos \left( \frac{\pi m }{n} \right)
\end{equation}
\noindent
is a root of the monic polynomial  $2T_{n}(x/2)$. It follows that $\omega_{m,n}$
is an algebraic integer and a rational number (namely $\tfrac{1}{4}$). This 
implies that it must be an integer (see \cite[page 50]{stewarti-1979a}). This
is a contradiction.
\end{proof}

These results are summarized in the next theorem.

\begin{theorem}
\label{thm-structure2}
There is no integer value of $k \neq -2$ for which 
$\{ B_{2n+1}^{*}(k + \tfrac{1}{2}) \}$ is periodic.
\end{theorem}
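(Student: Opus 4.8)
The plan is to assemble this theorem from the three preceding results together with the reflection symmetry, so that no new computation is really required beyond a dominance estimate. First I would record the reduction furnished by Theorem~\ref{thm-symm}: writing $x = k + \tfrac{1}{2}$, the identity $B_{2n+1}^{*}(-x-3) = -B_{2n+1}^{*}(x)$ becomes $B_{2n+1}^{*}\bigl((-k-4)+\tfrac{1}{2}\bigr) = -B_{2n+1}^{*}\bigl(k+\tfrac{1}{2}\bigr)$. Since a sequence is periodic if and only if its negative is, periodicity at $k+\tfrac{1}{2}$ is equivalent to periodicity at $(-k-4)+\tfrac{1}{2}$. The map $k \mapsto -k-4$ is an involution on $\mathbb{Z}$ whose unique fixed point is $k=-2$, corresponding to $x=-\tfrac{3}{2}$, where the sequence vanishes identically and is therefore (trivially) periodic; this is exactly the value excluded in the statement. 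Since the involution carries $\{k \geq -1\}$ onto $\{k \leq -3\}$, it suffices to prove non-periodicity for the integers $k \geq -1$.

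The cases $k=-1$ and $k=0$ are precisely the two lemmas proved immediately above, which establish that $\{B_{2n+1}^{*}(-\tfrac{1}{2})\}$ and $\{B_{2n+1}^{*}(\tfrac{1}{2})\}$ are not periodic; I would simply invoke them.

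It remains to treat $k \geq 1$. Here I would use Theorem~\ref{thm-khalf} to write
\[
B_{2n+1}^{*}\Bigl(k+\tfrac{1}{2}\Bigr) = \frac{1}{2}\sum_{r=0}^{k+1} U_{2n}\Bigl(\frac{2r+1}{4}\Bigr),
\]
and split the sum according to whether the argument $\tfrac{2r+1}{4}$ lies below or above $1$. The two terms $r=0,1$ have arguments $\tfrac{1}{4},\tfrac{3}{4} \in (0,1)$, so writing $\tfrac{2r+1}{4} = \cos\theta_{r}$ with real $\theta_{r}$ and using \eqref{form-un1} gives the bound $\bigl|U_{2n}(\tfrac{2r+1}{4})\bigr| \leq 1/\sin\theta_{r}$, independent of $n$. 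Every remaining term has $r \geq 2$, hence argument $\tfrac{2r+1}{4} \geq \tfrac{5}{4} > 1$; for such arguments \eqref{form-utwon} shows $U_{2n}(\tfrac{2r+1}{4}) > 0$, since there both the base $x+\sqrt{x^{2}-1}$ and the numerator are positive when $x>1$. By Lemma~\ref{fivequarter} the single term $U_{2n}(\tfrac{5}{4})$ already grows like $4^{n}$, so the whole sum is unbounded and $\{B_{2n+1}^{*}(k+\tfrac{1}{2})\}$ cannot be periodic.

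The only point requiring care—and the step I would flag as the crux—is the dominance argument for $k \geq 1$: one must rule out that the exponentially growing contributions cancel among themselves. This is exactly what the strict positivity of $U_{2n}$ on $(1,\infty)$ coming from \eqref{form-utwon} provides, since it forces all large-argument terms to add constructively while the two small-argument terms stay bounded. Everything else in the proof is bookkeeping: the symmetry reduction to $k \geq -1$ and the citation of the two low-index lemmas.
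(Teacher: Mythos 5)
Your proposal is correct and follows essentially the same route as the paper: the reflection symmetry of Theorem \ref{thm-symm} reduces matters to $k \geq -1$, the cases $k=-1$ and $k=0$ are disposed of by the two preceding lemmas, and for $k \geq 1$ the sum in Theorem \ref{thm-khalf} is unbounded because of the term $U_{2n}\left(\tfrac{5}{4}\right)$ computed in Lemma \ref{fivequarter}. Your only addition is to make the dominance step explicit—strict positivity of $U_{2n}$ at arguments greater than $1$ via \eqref{form-utwon}, together with the uniform bound on the $r=0,1$ terms—which the paper leaves implicit, and that is sound (and indeed necessary) bookkeeping rather than a different argument.
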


\medskip

\noindent
\textbf{Special values of $B_{2n}^{*}(x)$}. The second case considered here 
deals with values of the subsequence $B_{2n}^{*}(x)$. Symbolic experiments
were unable to produce nice closed-forms for special values of 
$B_{2n}^{*}(x)$, but the identity 
\begin{equation}
B_{2n}^{*}(-1) = B_{2n}^{*}(-2)
\label{test-11}
\end{equation}
\noindent
motivated the definition of the function 
\begin{equation}
A_{2n}^{*}(u):= 
B_{2n}^{*}(-1-u) - B_{2n}^{*}(-1), \text{ for } u \in \mathbb{Z}.
\end{equation}

\begin{lemma}
For $n \in \mathbb{N}$, the function $A_{2n}^{*}(u)$ 
satisfies $A_{2n}^{*}(u) = A_{2n}^{*}(1-u)$.
Therefore, it suffices to describe $A_{2n}^{*}(u)$ for $u \geq 1$. 
\end{lemma}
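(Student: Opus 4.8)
The plan is to reduce the claimed identity $A_{2n}^{*}(u) = A_{2n}^{*}(1-u)$ directly to the reflection symmetry of the Zagier polynomials proved in Theorem \ref{thm-symm}. The key observation is that the defining shift $x \mapsto -1-u$ in $A_{2n}^{*}(u)$, together with the replacement $u \mapsto 1-u$, interchanges two arguments that are mirror images of each other about the axis of symmetry $x = -\tfrac{3}{2}$.

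First I would unfold the definition of the right-hand side. By definition,
\begin{equation*}
A_{2n}^{*}(1-u) = B_{2n}^{*}\bigl(-1-(1-u)\bigr) - B_{2n}^{*}(-1) = B_{2n}^{*}(u-2) - B_{2n}^{*}(-1),
\end{equation*}
while $A_{2n}^{*}(u) = B_{2n}^{*}(-1-u) - B_{2n}^{*}(-1)$. Since the subtracted term $B_{2n}^{*}(-1)$ is common to both, the assertion reduces to the single equality
\begin{equation*}
B_{2n}^{*}(-1-u) = B_{2n}^{*}(u-2).
\end{equation*}

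Next I would invoke Theorem \ref{thm-symm}, which gives $B_{n}^{*}(-x-3) = (-1)^{n}B_{n}^{*}(x)$. Specializing to even index $2n$, the sign $(-1)^{2n}=1$ disappears, so $B_{2n}^{*}(-x-3) = B_{2n}^{*}(x)$. Setting $x = u-2$ yields $-x-3 = -(u-2)-3 = -1-u$, and hence $B_{2n}^{*}(-1-u) = B_{2n}^{*}(u-2)$, which is exactly the reduced equality above. This establishes $A_{2n}^{*}(u) = A_{2n}^{*}(1-u)$, and the final sentence about restricting to $u \geq 1$ follows because the fixed point of $u \mapsto 1-u$ is $u = \tfrac{1}{2}$, so every integer value of $A_{2n}^{*}$ is already matched by one with $u \geq 1$.

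There is no real obstacle here: the argument is a single substitution into an already-proved symmetry, and the only point requiring care is the routine bookkeeping of the constant shift built into the definition of $A_{2n}^{*}$, ensuring that the translation by $-1$ inside $B_{2n}^{*}(-1-u)$ and the reflection $u \mapsto 1-u$ combine correctly to land on the symmetry axis $x=-\tfrac{3}{2}$ of the even-index Zagier polynomials.
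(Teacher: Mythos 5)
Your proposal is correct and follows essentially the same route as the paper, whose proof is precisely the observation that the identity follows directly from the reflection symmetry $B_{n}^{*}(-x-3)=(-1)^{n}B_{n}^{*}(x)$ of Theorem \ref{thm-symm}; you have merely spelled out the substitution $x=u-2$ and the cancellation of the common constant $B_{2n}^{*}(-1)$, which the paper leaves implicit. The bookkeeping is accurate, including the remark that the fixed point $u=\tfrac{1}{2}$ of $u\mapsto 1-u$ is not an integer, so restricting to $u\geq 1$ loses nothing.
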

\begin{proof}
This follows directly from Theorem \ref{thm-symm}.
\end{proof}

The next statement expresses the function $A_{2n}^{*}$ in terms of the 
Chebsyshev polynomials of the second kind $U_{2n-1}(x)$.

\begin{proposition}
The function $A_{2n}^{*}$ is given by 
\begin{equation}
A_{2n}^{*}(u) = \frac{1}{2} \sum_{j=2}^{u+1} U_{2n-1} \left( \frac{u+1-j}{2}
\right).
\end{equation}
\end{proposition}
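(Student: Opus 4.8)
The plan is to read this off directly from the extended difference formula \eqref{nice-reader2}, which already expresses a difference of two Zagier polynomials as a half-sum of Chebyshev values. First I would specialize \eqref{nice-reader2} to $n \mapsto 2n$, $x = -1$, and $k = u$. The argument of the Chebyshev polynomial there simplifies since $\frac{-1-j}{2} + 1 = \frac{1-j}{2}$, so this produces
\begin{equation*}
B_{2n}^{*}(-1) - B_{2n}^{*}(-1-u) = \frac{1}{2} \sum_{j=1}^{u} U_{2n-1} \left( \frac{1-j}{2} \right).
\end{equation*}
Recalling the definition $A_{2n}^{*}(u) = B_{2n}^{*}(-1-u) - B_{2n}^{*}(-1)$, this is exactly $-A_{2n}^{*}(u)$, so I immediately obtain $A_{2n}^{*}(u) = -\tfrac{1}{2}\sum_{j=1}^{u} U_{2n-1}(\tfrac{1-j}{2})$.

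The next step is to remove the minus sign using the parity of the Chebyshev polynomials. Since $U_{m}(x)$ has the same parity as $m$, the polynomial $U_{2n-1}$ is odd, so $U_{2n-1}(\tfrac{1-j}{2}) = -U_{2n-1}(\tfrac{j-1}{2})$. Substituting this flips the sign and gives $A_{2n}^{*}(u) = \tfrac{1}{2}\sum_{j=1}^{u} U_{2n-1}(\tfrac{j-1}{2})$. Reindexing by $m = j-1$ then puts this in the canonical form
\begin{equation*}
A_{2n}^{*}(u) = \frac{1}{2} \sum_{m=0}^{u-1} U_{2n-1} \left( \frac{m}{2} \right).
\end{equation*}

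Finally I would show that the right-hand side of the proposition reduces to the same canonical form. Setting $i = j-1$ in $\tfrac{1}{2}\sum_{j=2}^{u+1} U_{2n-1}(\tfrac{u+1-j}{2})$ turns the argument into $\tfrac{u-i}{2}$ with $i$ running from $1$ to $u$; as $i$ decreases the argument $\tfrac{u-i}{2}$ runs over $\tfrac{0}{2}, \tfrac{1}{2}, \dots, \tfrac{u-1}{2}$, so the stated sum equals $\tfrac{1}{2}\sum_{m=0}^{u-1} U_{2n-1}(\tfrac{m}{2})$ as well. The two expressions agree, completing the proof. The entire argument is bookkeeping; the only place demanding care is tracking the index shifts together with the sign contributed by the odd parity of $U_{2n-1}$, so I expect the main (minor) obstacle to be simply ensuring that both reindexings land on the identical range $m = 0, \dots, u-1$ rather than being off by one.
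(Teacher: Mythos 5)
Your proof is correct and follows essentially the same route as the paper, whose entire proof is the instruction to iterate \eqref{nice-reader} (i.e., to apply \eqref{nice-reader2}, exactly as you do). You have simply made explicit the bookkeeping the paper leaves implicit, including the odd parity of $U_{2n-1}$ needed to flip the sign, and your index checks are accurate.
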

\begin{proof}
Iterate the identity  \eqref{nice-reader}.
\end{proof}

The expression in \eqref{sum-psi} yields the next result.

\begin{lemma}
\label{bstar-even}
The generating function of the sequence $\{ B_{2n}^{*}(x) - 
B_{2n}^{*}(-1) \}$ satisfies
\begin{equation*}
4 \sum_{n=1}^{\infty} \left[ B_{2n}^{*}(x) - B_{2n}^{*}(-1) \right] z^{2n}  
  =   
- \psi(w -1-x)  - \psi(w+2+x) + 2 \psi(w) + \frac{1}{w}
\end{equation*}
\noindent
with $w = z + 1/z$.
\end{lemma}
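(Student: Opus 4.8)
The plan is to start from the generating function for the even part of the Zagier polynomials established in equation \eqref{sum-psi}, namely
\begin{equation*}
\sum_{n=1}^{\infty} B_{2n}^{*}(x)z^{2n} =
- \tfrac{1}{2} \log z
-\tfrac{1}{4}  \psi \left( z + 1/z +2+x  \right)
- \tfrac{1}{4} \psi \left( z + 1/z -1 - x  \right),
\end{equation*}
and to use it twice: once with the general parameter $x$ and once with the special value $x=-1$. Writing $w=z+1/z$ throughout, the first step is to record the two instances. For general $x$ the right-hand side is $-\tfrac{1}{2}\log z - \tfrac{1}{4}\psi(w+2+x) - \tfrac{1}{4}\psi(w-1-x)$. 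For $x=-1$ the arguments collapse: $w+2+x = w+1$ and $w-1-x = w$, so the right-hand side becomes $-\tfrac{1}{2}\log z - \tfrac{1}{4}\psi(w+1) - \tfrac{1}{4}\psi(w)$.

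Second, I would subtract the $x=-1$ instance from the general one term by term. The $-\tfrac{1}{2}\log z$ contributions cancel exactly, which is the crucial simplification: the difference of generating functions is free of the logarithmic term and depends on $z$ only through $w$. This yields
\begin{equation*}
\sum_{n=1}^{\infty}\left[B_{2n}^{*}(x)-B_{2n}^{*}(-1)\right]z^{2n}
= -\tfrac{1}{4}\psi(w+2+x) - \tfrac{1}{4}\psi(w-1-x) + \tfrac{1}{4}\psi(w+1) + \tfrac{1}{4}\psi(w).
\end{equation*}
Multiplying through by $4$ already produces three of the four terms in the claimed formula, $-\psi(w-1-x)-\psi(w+2+x)+\psi(w)$, together with a leftover $\psi(w+1)$ in place of the desired second $\psi(w)+1/w$.

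Third, I would reconcile $\psi(w+1)$ with $\psi(w)+1/w$ using the fundamental functional equation of the digamma function, $\psi(w+1)=\psi(w)+1/w$ (the $m=1$ case of the recurrence $\psi(z+m)=\psi(z)+\sum_{k=0}^{m-1}1/(z+k)$ already invoked earlier in the paper). Substituting this identity turns the $\psi(w+1)$ term into $\psi(w)+1/w$, so the two $\psi(w)$ contributions combine to $2\psi(w)$ and the $1/w$ appears, giving exactly the right-hand side of the statement. I do not anticipate a genuine obstacle here; the only point requiring mild care is the bookkeeping of which $\psi$-arguments carry the factor $-\tfrac14$ versus $+\tfrac14$ after the subtraction, and the verification that the $\log z$ terms cancel rather than combine into $\log(-z)$ — but since both instances use the same $z$ (not $\pm z$), the cancellation is immediate and no branch issues arise.
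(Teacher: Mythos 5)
Your proposal is correct and takes essentially the same approach the paper intends: the paper offers no written proof, stating only that the lemma follows from \eqref{sum-psi}, and your computation — subtracting the $x=-1$ specialization (where the arguments collapse to $w+1$ and $w$) from the general instance, noting the cancellation of the $-\tfrac{1}{2}\log z$ terms, and invoking $\psi(w+1)=\psi(w)+1/w$ — is exactly the calculation the authors left to the reader.
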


The proof of the next result is similar to that of Theorem \ref{two-z}.

\begin{corollary}
\label{two-z1}
The generating function 
\begin{equation}
\sum_{n=1}^{\infty} \left[ B_{2n}^{*}(x) - B_{2n}^{*}(-1) \right] z^{n}
\end{equation}
\noindent
is a rational function of $z$ if and only if $2x \in \mathbb{Z}$.
\end{corollary}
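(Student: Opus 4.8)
The plan is to reproduce, in the even setting, the three-step argument used for Theorem~\ref{two-z}. The starting point is Lemma~\ref{bstar-even}, which gives
\[
4\sum_{n=1}^{\infty}\bigl[B_{2n}^{*}(x)-B_{2n}^{*}(-1)\bigr]z^{2n}
=-\psi(w-1-x)-\psi(w+2+x)+2\psi(w)+\frac{1}{w},
\]
where $w=z+1/z$. First I would reduce the statement, which is phrased with $z^{n}$, to this even series in $z^{2n}$: a power series in $z^{2}$ is a rational function of $z$ if and only if it is a rational function of $z^{2}$, since an even rational function of $z$ can always be written as a rational function of $z^{2}$ and the reverse implication is immediate. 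Thus rationality of $\sum_{n\ge 1}[\,\cdots\,]z^{n}$ is equivalent to rationality of the left-hand side above as a function of $z$.

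Next, since the right-hand side depends on $z$ only through $w=z+1/z$, it is invariant under $z\mapsto 1/z$. Hence, exactly as in Theorem~\ref{two-z}, Lemma~\ref{rat-inv} shows that the series is a rational function of $z$ if and only if the expression $\Phi(w):=-\psi(w-1-x)-\psi(w+2+x)+2\psi(w)$ is a rational function of $w$, the trailing $1/w$ being already rational and therefore discardable. The problem is thereby reduced to deciding, in terms of $x$, when $\Phi$ is rational, and this is to be settled by Proposition~\ref{shift-b}.

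The delicate point, and the main obstacle, is the precise structure of $\Phi$. In the odd case of Theorem~\ref{two-z} the two digammas entered as a clean difference $\psi(w+2+x)-\psi(w-1-x)$, whose argument gap $3+2x$ is an integer precisely when $2x\in\mathbb{Z}$, so a single application of Proposition~\ref{shift-b} gave the answer. Here, by contrast, the two shifted digammas appear with the \emph{same} sign and are accompanied by the term $2\psi(w)$, so no single shift condition applies directly. The plan is to write $\Phi(w)=\bigl[\psi(w)-\psi(w-1-x)\bigr]+\bigl[\psi(w)-\psi(w+2+x)\bigr]$ and to reduce all four arguments to a common base modulo integer shifts via $\psi(t+1)=\psi(t)+1/t$; after cancelling the resulting finite sums of terms $1/(w+k)$, $\Phi$ is rational if and only if the leftover digamma combination is, and Proposition~\ref{shift-b} then pins down the precise integrality condition on $x$. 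I expect the careful bookkeeping of these shifts to be the only real work, the crux being whether the surviving half-integer shifts cancel against $2\psi(w)$ or instead leave a genuine term such as $\psi(w)-\psi(w+\tfrac12)$; the converse direction is then immediate, since for the admissible $x$ the same reduction expresses $\Phi$ explicitly as a finite rational sum.
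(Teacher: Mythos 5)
Your plan is faithful to the paper's intent (the paper itself gives no details, asserting only that the proof is ``similar to that of Theorem \ref{two-z}''), but the crux you explicitly defer does not resolve the way the corollary needs, so the proposal is not yet a proof --- and in fact no bookkeeping can complete it, because the ``if'' direction of the statement as written fails when $2x$ is an odd integer. Carry out your own reduction at $x\in\tfrac12+\mathbb{Z}$: the two negatively-signed digammas in Lemma \ref{bstar-even} have arguments differing by the integer $3+2x$, so repeated use of $\psi(t+1)=\psi(t)+1/t$ merges them into $2\psi(w-1-x)$ plus a rational function, leaving $\Phi(w)=2\bigl[\psi(w)-\psi(w-1-x)\bigr]+(\text{rational})$ with shift $1+x\notin\mathbb{Z}$; Proposition \ref{shift-b} then says this is \emph{not} rational. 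Concretely, at $x=-\tfrac32$ the lemma gives exactly $2\psi(w)-2\psi\bigl(w+\tfrac12\bigr)+1/w$, whose pole sets $-\mathbb{Z}_{\geq 0}$ and $-\tfrac12-\mathbb{Z}_{\geq 0}$ are disjoint and infinite, hence cannot cancel. So the surviving half-integer shift does \emph{not} cancel against $2\psi(w)$: the even-index generating function is rational precisely when $x\in\mathbb{Z}$, not when $2x\in\mathbb{Z}$. (The corrected criterion is consistent with the paper's subsequent theorem, where all periodic cases of $A_{2n}^{*}$ occur at integers.)

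There is a second, independent gap in your ``only if'' direction. You propose to ``reduce all four arguments to a common base modulo integer shifts,'' but for arbitrary real $x$ the arguments $w$, $w-1-x$, $w+2+x$ are not congruent modulo $\mathbb{Z}$, so no such reduction exists; integer shifting becomes available only after one knows $2x\in\mathbb{Z}$, which is what is to be proved. Proposition \ref{shift-b} treats a single difference $\psi(t+b)-\psi(t)$ and does not apply to the three-term combination $2\psi(w)-\psi(w-1-x)-\psi(w+2+x)$. The workable substitute is a pole count: $2\psi(w)$ has residue $-2$ poles at every $w=-n$, while the two negative terms have residue $+1$ poles along the progressions $1+x-\mathbb{Z}_{\geq 0}$ and $-2-x-\mathbb{Z}_{\geq 0}$; rationality forces all but finitely many poles to cancel, which requires both progressions to eventually coincide with $-\mathbb{Z}_{\geq 0}$, i.e. $1+x\in\mathbb{Z}$ and $2+x\in\mathbb{Z}$, hence $x\in\mathbb{Z}$. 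This yields the paper's ``only if'' (a fortiori $2x\in\mathbb{Z}$) and, combined with the first paragraph, closes both directions --- but for the corrected statement ``rational if and only if $x\in\mathbb{Z}$,'' not the one printed.
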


The next statement is an analogue of Theorems \ref{thm-structure1} and 
\ref{thm-structure2}. 

\begin{theorem}
Let $A_{2n}^{*}(x) = B_{2n}^{*}(-1-x) - B_{2n}^{*}(-1)$ as above. Then 

\noindent
1) The sequences $A_{2n}^{*}(1)$ and $A_{2n}^{*}(0)$ vanish identically. 

\noindent
2) The sequences $A_{2n}^{*}(2)$ and $A_{2n}^{*}(-1)$ are periodic with period 3. The 
repeating values are $\{ \tfrac{1}{2}, \, - \tfrac{1}{2}, \, 0 \}$. 

\noindent
3) The sequences $A_{2n}^{*}(3)$ and $A_{2n}^{*}(-2)$ grow linearly in $n$. Moreover, 
$A_{2n}^{*}(3)-n$ and $A_{2n}^{*}(-2)-n$ are 
periodic with period 3. The 
repeating values are $\{ \tfrac{1}{2}, \, - \tfrac{1}{2}, \, 0 \}$. 

\noindent
4) The sequence $A_{2n}^{*}(x)$ is unbounded for $x \geq 4$ and $x \leq -3$.
\end{theorem}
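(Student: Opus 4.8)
The plan is to prove everything from the Chebyshev closed form established above, namely $A_{2n}^{*}(u) = \tfrac{1}{2}\sum_{j=2}^{u+1} U_{2n-1}\!\left(\tfrac{u+1-j}{2}\right)$, combined with the reflection identity $A_{2n}^{*}(u) = A_{2n}^{*}(1-u)$. This reflection pairs the four regimes of the statement exactly: $A_{2n}^{*}(0) = A_{2n}^{*}(1)$, $A_{2n}^{*}(-1) = A_{2n}^{*}(2)$, $A_{2n}^{*}(-2) = A_{2n}^{*}(3)$, and for $x \leq -3$ one has $1-x \geq 4$, so $A_{2n}^{*}(x) = A_{2n}^{*}(1-x)$ reduces to the case $u \geq 4$. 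Thus it is enough to treat $u \in \{1,2,3\}$ and $u \geq 4$. First I would reindex by $m = u+1-j$ to rewrite the closed form as $A_{2n}^{*}(u) = \tfrac{1}{2}\sum_{m=0}^{u-1} U_{2n-1}\!\left(\tfrac{m}{2}\right)$, which exposes the individual evaluations $U_{2n-1}(m/2)$ as the building blocks of the whole argument.

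Next I would evaluate these blocks. Since $U_{2n-1}$ is odd, $U_{2n-1}(0) = 0$; hence the $u=1$ sum is $\tfrac{1}{2}U_{2n-1}(0) = 0$ and the $u=0$ sum is empty, which gives part (1). Using the definition \eqref{form-un1} with $\cos\theta = \tfrac{1}{2}$ gives $U_{2n-1}(\tfrac{1}{2}) = \tfrac{2}{\sqrt{3}}\sin(2n\pi/3)$, a $3$-periodic sequence with values $\{1,-1,0\}$, so $A_{2n}^{*}(2) = \tfrac{1}{2}U_{2n-1}(\tfrac{1}{2})$ repeats $\{\tfrac{1}{2},-\tfrac{1}{2},0\}$, which is part (2). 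Letting $\theta \to 0$ in \eqref{form-un1} yields $U_{2n-1}(1) = 2n$, whence $A_{2n}^{*}(3) = \tfrac{1}{2}\bigl(U_{2n-1}(\tfrac{1}{2}) + U_{2n-1}(1)\bigr) = n + \tfrac{1}{2}U_{2n-1}(\tfrac{1}{2})$; therefore $A_{2n}^{*}(3)$ grows linearly and $A_{2n}^{*}(3) - n = A_{2n}^{*}(2)$ is $3$-periodic, which is part (3).

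For part (4) I would establish unboundedness when $u \geq 4$. The key fact is that for $x > 1$ the closed form \eqref{form-utwon} gives $U_{N}(x) > 0$ for every $N \geq 0$, since $x + \sqrt{x^{2}-1} > 1 > x - \sqrt{x^{2}-1} > 0$; in particular $U_{2n-1}(m/2) > 0$ for all $m \geq 3$. Splitting off the low-order terms, $A_{2n}^{*}(u) = \tfrac{1}{2}U_{2n-1}(\tfrac{1}{2}) + \tfrac{1}{2}U_{2n-1}(1) + \tfrac{1}{2}\sum_{m=3}^{u-1}U_{2n-1}(m/2)$, where the tail is positive and contains the term $\tfrac{1}{2}U_{2n-1}(\tfrac{3}{2})$. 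Evaluating \eqref{form-utwon} at $x = \tfrac{3}{2}$ gives $U_{2n-1}(\tfrac{3}{2}) = \tfrac{1}{\sqrt{5}}\bigl[\bigl(\tfrac{3+\sqrt{5}}{2}\bigr)^{2n} - \bigl(\tfrac{3-\sqrt{5}}{2}\bigr)^{2n}\bigr]$, which grows exponentially, so the crude bound $A_{2n}^{*}(u) \geq \tfrac{1}{2}U_{2n-1}(\tfrac{3}{2}) - \tfrac{1}{2}$ (using that $U_{2n-1}(\tfrac{1}{2})$ is bounded and $U_{2n-1}(1) = 2n \geq 0$) forces $A_{2n}^{*}(u) \to \infty$.

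The arguments are elementary once the reindexed formula is in hand, so I do not anticipate a serious obstacle. The only delicate point is part (4): one must verify that the bounded oscillating contribution $U_{2n-1}(\tfrac{1}{2})$ cannot cancel the exponential growth, and this is precisely what the uniform positivity of the tail and the explicit lower bound guarantee.
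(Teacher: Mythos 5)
Your proof is correct and follows exactly the route the paper intends: it assembles the two preparatory results stated just before the theorem (the reflection $A_{2n}^{*}(u)=A_{2n}^{*}(1-u)$ and the Chebyshev sum representation of $A_{2n}^{*}$) and evaluates $U_{2n-1}$ at $0,\tfrac12,1,\tfrac32$, mirroring the paper's treatment of the odd-index case in Theorem \ref{thm-structure1}. Your reindexed formula, the special values, the identity $A_{2n}^{*}(3)-n=A_{2n}^{*}(2)$, and the positivity-plus-exponential-growth bound for $u\geq 4$ all check out.
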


\section{Additional properties of the Zagier polynomials}
\setcounter{equation}{0}
\label{sec-additional}

The Zagier polynomials $B_{n}^{*}(x)$ have a variety of interesting properties. 
These are recorded here for future studies. 

\smallskip 

\noindent
\textbf{Coefficients}. The Zagier polynomial $B_{n}^{*}(x)$ 
has rational coefficients, some of which are integers. Figure \ref{zag-1} 
shows the number of \textit{integer} coefficients in $B_{n}^{*}(x)$ 
as a function of $n$. The minimum
values seems to occur at the powers $2^{j}$, where the number of 
integer coeffcients is $j-1$.

{{
\begin{figure}[ht]
\begin{minipage}[t]{16em}
\centering
\epsfig{file=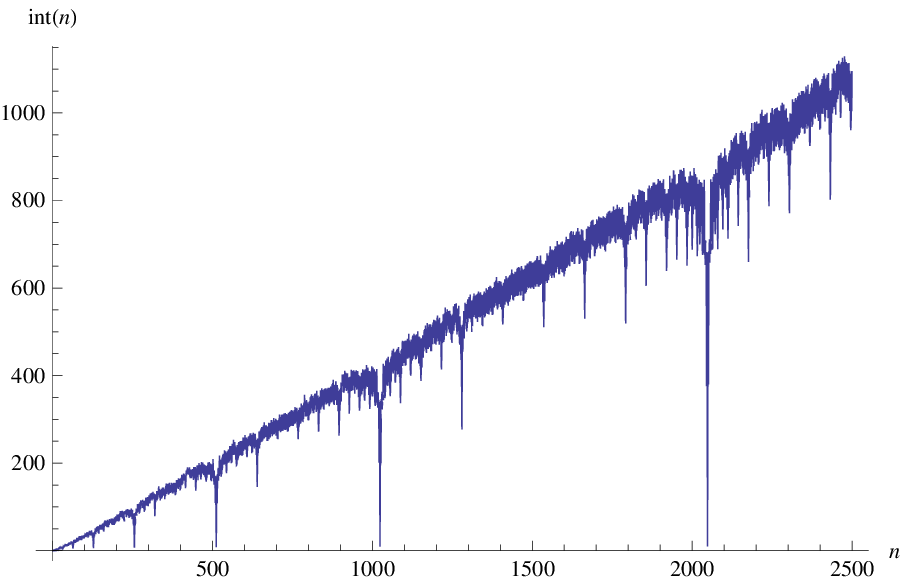,width=15em,angle=0}
\caption{Integer coefficients}
\label{zag-1}
\end{minipage}%
\begin{minipage}[t]{16em}
\centering
\epsfig{file=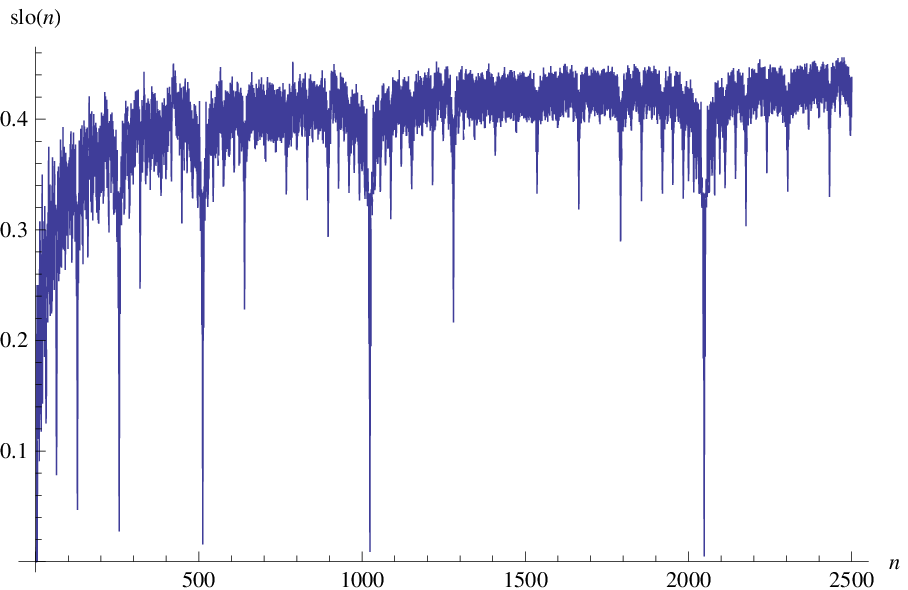,width=15em,angle=0}
\caption{Linear behavior}
\label{zag-2}
\end{minipage}
\end{figure}
}}

\medskip 

\noindent
\textbf{Signs of coefficients and shifts}. The coefficients of $B_{n}^{*}(x)$
do not have a fixed sign, but there is a tendency towards positivity. Figure 
\ref{excess} shows the excess of positive coefficients divided by the total 
number. On the other hand, the shifted polynomial $B_{n}^{*}(x + \tfrac{3}{2})$ 
appears to have 
only positive coefficients. The coefficients of the shifted polynomial 
appears to be \textit{logconcave}. This notion is defined in terms of the 
operator $\mathcal{L}$ acting on sequences 
$\{ a_{j} \}$ via $\mathcal{L}(\{ a_{j} \}) = \{ a_{j}^{2} - a_{j-1}a_{j+1} \}$.
A sequence is called logconcave if $\mathcal{L}(\{ a_{j} \})$ is nonnegative. 
The sequence is called \textit{infinitely logconcave} if any application of 
$\mathcal{L}$ produces positive sequences. The data suggests that the 
coefficients of $B_{n}^{*}(x + \tfrac{3}{2})$ form an infinitely logconcave 
sequence. 

\medskip

{{
\begin{figure}[ht]
\begin{minipage}[t]{16em}
\centering
\epsfig{file=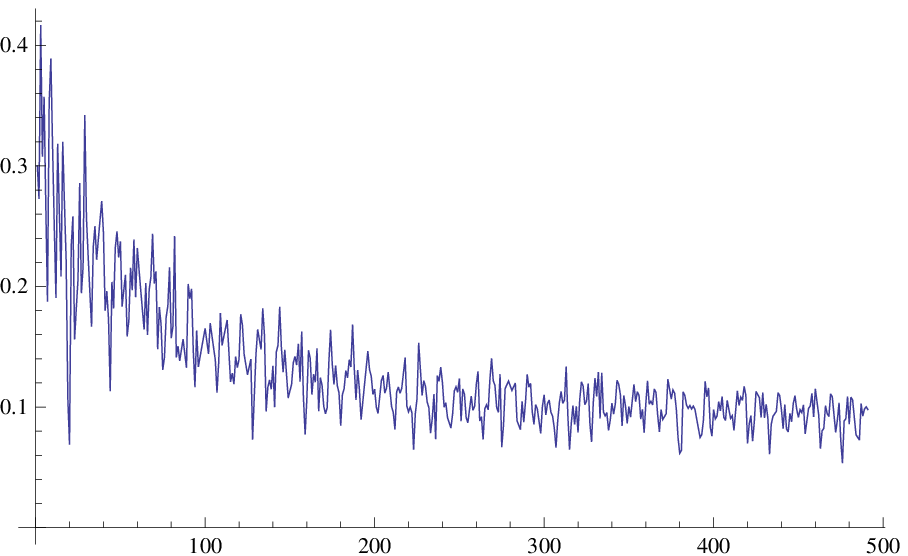,width=15em,angle=0}
\caption{Excess of positive coefficients}
\label{excess}
\end{minipage}%
\begin{minipage}[t]{16em}
\centering
\epsfig{file=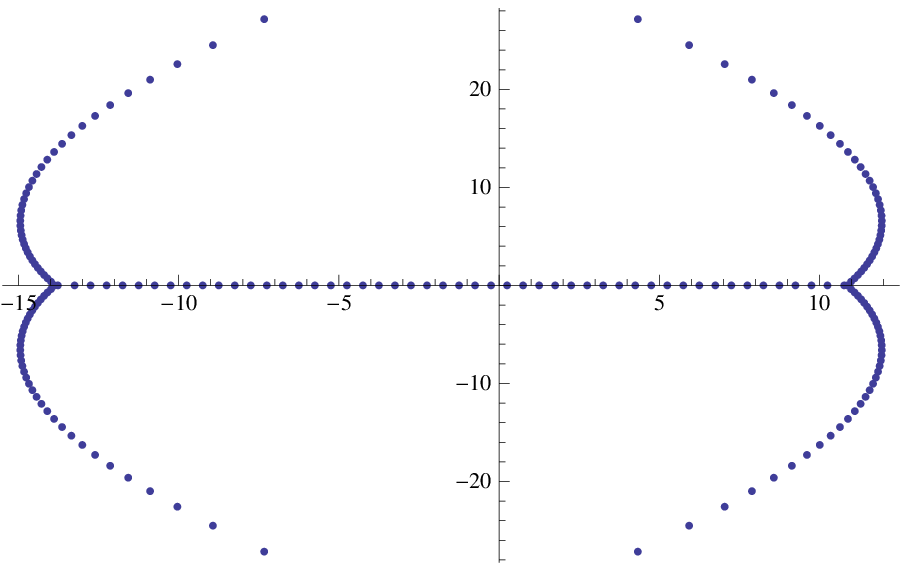,width=15em,angle=0}
\caption{Roots of $B_{200}^{*}(x)$.}
\label{roots-200}
\end{minipage}
\end{figure}
}}

\medskip

\noindent
\textbf{Roots of $B_{n}^{*}$}. There is a well-established connection 
between the nature of the roots of a polynomials and the logconcavity of 
its coefficients. P. Br\"and\'en \cite{branden-2011a} has shown that if a 
polynomial has only real and negative roots, then its sequence of coefficients
is infinitely logconcave. This motivated our computations of the 
roots of $B_{n}^{*}(x)$. The conclusion is that 
the polynomial 
$B_{n}^{*} \left( x + \tfrac{3}{2} \right)$ does not fall in this category
and Br\"and\'en's criteria does not apply. 
Figure \ref{roots-200} shows these roots for $n=200$. 

\smallskip

\noindent
\textbf{A second shift}. The polynomial $B_{n}^{*}(x - \tfrac{3}{2})$ admits 
a representation in terms of classical special functions. The 
Gegenbauer polynomial is defined by (see \cite{temme-1996a}, p. 152, (6.37)):
\begin{equation}
C_{n}^{(\lambda)}(x) = \binom{n+2 \lambda-1}{n} 
{_{2}F_{1}} \left( -n, \, n+2 \lambda; \, \lambda + \tfrac{1}{2}; \, 
\tfrac{1}{2}(1-x)  \right).
\end{equation}

\begin{theorem}
\label{thm-shifted-1}
The shifted Bernoulli polynomial $\tilde{B}_{n}(x)$, defined by 
$B_{n}^{*} \left( x - \tfrac{3}{2} \right)$ is given by
\begin{equation}
\tilde{B}_{n}(x) = \frac{1}{n} T_{n} \left( \frac{x}{2} \right) + 
\sum_{k=1}^{\lf \frac{n}{2} \rf} \frac{B_{2k}(1/2)}{k2^{2k+2}} 
C_{n-2k}^{(2k)} \left( \frac{x}{2} \right).
\end{equation}
\end{theorem}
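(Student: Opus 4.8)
The plan is to start from the probabilistic representation in Lemma \ref{modi-ver1} and to exploit the fact that the shift by $-\tfrac{3}{2}$ is exactly what is needed to cancel the additive constant $\tfrac{3}{4}$ sitting inside the Chebyshev polynomial. Concretely, replacing $x$ by $x-\tfrac{3}{2}$ in Lemma \ref{modi-ver1} gives
\[
\tilde{B}_{n}(x) = B_{n}^{*}\left(x-\tfrac{3}{2}\right)
= \frac{1}{n}\,\mathbb{E}\left[T_{n}\!\left(\frac{x+iL_{B}}{2}\right)\right],
\]
since the $+\tfrac{3}{4}$ from the lemma and the $-\tfrac{3}{4}$ coming from the shift cancel. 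This is the clean object to analyze, and it is the reason the symmetry centre $-\tfrac{3}{2}$ is the natural one for a closed form.

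The second step is to Taylor expand $T_{n}\!\left(\tfrac{1}{2}(x+iL_{B})\right)$ in powers of $iL_{B}$ about the point $\tfrac{x}{2}$. Each differentiation in the variable $L_{B}$ contributes a factor $\tfrac{1}{2}$, so the (finite, degree $n$) expansion reads
\[
T_{n}\!\left(\frac{x+iL_{B}}{2}\right)
= \sum_{j=0}^{n}\frac{(iL_{B})^{j}}{j!\,2^{j}}\,T_{n}^{(j)}\!\left(\frac{x}{2}\right).
\]
The crucial input for applying $\mathbb{E}$ is the moment identity obtained by setting $x=\tfrac{1}{2}$ in \eqref{umbra-ber}, namely $\mathbb{E}[(iL_{B})^{j}]=B_{j}(\tfrac{1}{2})$. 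Since $B_{j}(\tfrac{1}{2})$ vanishes for odd $j$, only even powers survive, with $\mathbb{E}[(iL_{B})^{2k}]=B_{2k}(\tfrac{1}{2})$, and termwise integration (legitimate because the sum is finite, so no convergence issue arises) collapses the expansion to
\[
\tilde{B}_{n}(x)
= \frac{1}{n}\sum_{k=0}^{\lfloor n/2\rfloor}
\frac{B_{2k}(\tfrac{1}{2})}{(2k)!\,2^{2k}}\,T_{n}^{(2k)}\!\left(\frac{x}{2}\right).
\]
The $k=0$ term is $\tfrac{1}{n}T_{n}(\tfrac{x}{2})$ because $B_{0}(\tfrac{1}{2})=1$, matching the leading term of the statement.

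The third step converts the even-order derivatives $T_{n}^{(2k)}$ into Gegenbauer polynomials. I would prove, by induction on $m$, the closed form
\[
T_{n}^{(m)}(x)=n\,2^{m-1}(m-1)!\,C_{n-m}^{(m)}(x),\qquad m\geq 1,
\]
whose base case is $T_{n}'(x)=nU_{n-1}(x)=nC_{n-1}^{(1)}(x)$ from \eqref{der-tn}, and whose inductive step uses the classical Gegenbauer differentiation rule $\tfrac{d}{dx}C_{j}^{(\lambda)}(x)=2\lambda\,C_{j-1}^{(\lambda+1)}(x)$ (which can itself be read off from the hypergeometric forms \eqref{hyper-T} and the Gegenbauer definition preceding the theorem). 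Substituting $m=2k$ turns each summand into a multiple of $C_{n-2k}^{(2k)}(\tfrac{x}{2})$, and assembling these multiples gives the stated expansion.

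The main obstacle is entirely in the bookkeeping of the multiplicative constant at this last step: one must simultaneously track the Taylor factor $1/(j!\,2^{j})$, the normalisation $1/(2k)!$ absorbed from the moment, and the factors $2^{2k-1}(2k-1)!$ produced by the derivative identity, and check that together they yield precisely the coefficient of $C_{n-2k}^{(2k)}(\tfrac{x}{2})$ recorded in the statement. Because the four sources of powers of $2$ and factorials are easy to miscombine, a numerical check at $n=2$ (where $\tilde{B}_{2}(x)=\tfrac{x^{2}}{4}+c$ with $c$ a single constant, and $C_{0}^{(2)}=1$) is the sharpest safeguard for fixing the constant unambiguously before writing out the general induction.
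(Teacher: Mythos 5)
Your route is exactly the paper's route: the paper's own proof also starts from Lemma \ref{modi-ver1} with the shift by $-\tfrac{3}{2}$ cancelling the $\tfrac{3}{4}$, Taylor expands $T_{n}$ about $\tfrac{x}{2}$ (a finite sum, as you note), discards the odd terms via the vanishing odd moments of $L_{B}$, and converts the even-order derivatives of $T_{n}$ into Gegenbauer polynomials through the identity $T_{n}^{(m)}(x)=n\,2^{m-1}(m-1)!\,C_{n-m}^{(m)}(x)$. The only methodological difference is how that identity is obtained: the paper reads it off from the hypergeometric representation \eqref{hyper-T} together with the differentiation rule for ${_{2}F_{1}}$, whereas you propose induction on $m$ from the base case \eqref{der-tn} via $\frac{d}{dx}C_{j}^{(\lambda)}(x)=2\lambda\,C_{j-1}^{(\lambda+1)}(x)$. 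Both are standard and equally short, so this is a cosmetic variation, not a different proof.

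The substantive point is the constant bookkeeping you flagged as the main obstacle, and you were right to flag it: carried out, it does \emph{not} reproduce the printed coefficient. Combining your Taylor factor $1/((2k)!\,2^{2k})$, the moment $\mathbb{E}[(iL_{B})^{2k}]=B_{2k}(\tfrac{1}{2})$ from \eqref{umbra-ber}, the derivative factor $n\,2^{2k-1}(2k-1)!$ and the prefactor $\tfrac{1}{n}$ gives
\begin{equation*}
\frac{1}{n}\cdot\frac{B_{2k}(\tfrac{1}{2})}{(2k)!\,2^{2k}}\cdot n\,2^{2k-1}(2k-1)! \;=\; \frac{B_{2k}(\tfrac{1}{2})}{4k},
\end{equation*}
not $\frac{B_{2k}(1/2)}{k\,2^{2k+2}}$. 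Your proposed $n=2$ safeguard settles which is correct: from \eqref{zag-polydef} one computes $B_{2}^{*}(x)=\tfrac{x^{2}}{4}+\tfrac{3x}{4}+\tfrac{1}{24}$, hence $\tilde{B}_{2}(x)=B_{2}^{*}\left(x-\tfrac{3}{2}\right)=\tfrac{x^{2}}{4}-\tfrac{25}{48}$, while $\tfrac{1}{2}T_{2}\left(\tfrac{x}{2}\right)=\tfrac{x^{2}}{4}-\tfrac{1}{2}$, so the Gegenbauer term must contribute $-\tfrac{1}{48}=\frac{B_{2}(1/2)}{4}$; the printed coefficient would instead give $\frac{B_{2}(1/2)}{16}=-\tfrac{1}{192}$. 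So your derivation is sound and the theorem as printed is off by a factor $2^{2k}$ in the $k$-th coefficient; the paper's own proof steps, which use the moments $\mathbb{E}\left[\left(\tfrac{iL_{B}}{2}\right)^{2k}\right]=B_{2k}(\tfrac{1}{2})/2^{2k}$ and are otherwise identical to yours, also produce $B_{2k}(\tfrac{1}{2})/(4k)$, so the slip is in the statement (the $2^{-2k}$ from the moment was double-counted), not in the argument. Your proof is complete once you record the corrected coefficient rather than trying to force a match with the printed one.
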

\begin{proof}
Lemma \ref{modi-ver1} and expanding as a Taylor sum gives 
\begin{eqnarray*}
\tilde{B}_{n}(x) & = &  \mathbb{E} \left[ \frac{1}{n} T_{n} 
\left( \frac{x}{2} + \frac{1}{2} iL_{B} \right) \right] \\
& = & \frac{1}{n} \left( T_{n} \left( \frac{x}{2} \right) + 
\sum_{k=1}^{n} \frac{1}{k!} \mathbb{E} \left[ \left( \frac{i L_{B}}{2} \right)
^{k} \right]  \left( \frac{d}{dx} \right)^{k} T_{n} \left( \frac{x}{2} 
\right) \right).
\end{eqnarray*}
\noindent
The hypergeometric representation of the Chebyshev polynomial
\begin{equation}
T_{n}(x) = {_{2}F_{1}} \left( n, \, -n; \, \tfrac{1}{2}; \, \tfrac{1-x}{2} 
\right)
\end{equation}
\noindent
and the differentiation rule (Exercise 5.1 in \cite{temme-1996a}, p. 128)
\begin{equation}
\left( \frac{d}{dx} \right)^{k} {_{2}F_{1}} \left( a, \, b; \, c; \, z \right)
 = \frac{(a)_{k}(b)_{k}}{(c)_{k}} 
{_{2}F_{1}} \left( a+k, \, b+k; \, c+k; \, z \right)
\end{equation}
\noindent
give the identity 
\begin{equation}
\left( \frac{d}{dx} \right)^{k} T_{n}(x)  = n2^{k-1}(k-1)! C_{n-k}^{(k)}(x).
\end{equation}
\noindent
The odd moments of $L_{B}$ vanish and the even moments are given by 
\begin{equation}
\mathbb{E} \left[ \left( \frac{i L_{B}}{2} \right)^{2k} \right] = 
\frac{B_{2k}(\tfrac{1}{2})}{2^{2k}}
\end{equation}
\noindent
according to \eqref{umbra-ber}.
\end{proof}

The Chebyshev polynomial $T_{n}$ and the Gegenbauer polynomial 
$C_{n-2k}^{(2k)}$ have the same parity as $n$. Thus Theorem \ref{thm-shifted-1}
yields a new 
proof of Theorem \ref{thm-symm}, stated below in terms of $\tilde{B}_{n}$. 

\begin{corollary}
The shifted polynomials $\tilde{B}_{n}(x)$ have the same parity as $n$:
\begin{equation}
\tilde{B}_{n}(-x) = (-1)^{n} \tilde{B}_{n}(x).
\end{equation}
\end{corollary}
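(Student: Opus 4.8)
The plan is to read the conclusion directly off the representation in Theorem \ref{thm-shifted-1}, reducing the parity of $\tilde{B}_n$ to the parity of the two orthogonal polynomial families that occur there. Writing $y = x/2$, that theorem expresses $\tilde{B}_n(x)$ as a linear combination, with constant coefficients, of $T_n(y)$ and the Gegenbauer polynomials $C_{n-2k}^{(2k)}(y)$ for $1 \le k \le \lf n/2 \rf$. Since the substitution $x \mapsto -x$ corresponds to $y \mapsto -y$, it suffices to show that every polynomial appearing in the sum has parity $(-1)^n$ as a function of $y$; linearity then transfers this parity to $\tilde{B}_n$.

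First I would record the parity of the Chebyshev polynomial of the first kind, namely $T_n(-y) = (-1)^n T_n(y)$. This follows at once from $T_n(\cos\theta) = \cos n\theta$ by replacing $\theta$ with $\pi-\theta$, so that $\cos\theta \mapsto -\cos\theta$ while $\cos n\theta \mapsto (-1)^n \cos n\theta$; alternatively it is immediate from the hypergeometric representation \eqref{hyper-T}, since $T_n$ is a polynomial of degree $n$ whose expansion contains only powers of $y$ congruent to $n$ modulo $2$.

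Next I would establish the analogous parity of the Gegenbauer polynomials, $C_m^{(\lambda)}(-y) = (-1)^m C_m^{(\lambda)}(y)$. The cleanest route is the standard generating function $(1 - 2yt + t^2)^{-\lambda} = \sum_{m \ge 0} C_m^{(\lambda)}(y)\, t^m$: replacing $(y,t)$ by $(-y,-t)$ leaves the left-hand side unchanged, while the right-hand side becomes $\sum_{m \ge 0} (-1)^m C_m^{(\lambda)}(y)\, t^m$, and comparing coefficients of $t^m$ gives the claim. Applying this with $m = n-2k$ yields parity $(-1)^{n-2k} = (-1)^n$ for each Gegenbauer term, matching the parity of the leading Chebyshev term.

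Combining these two facts, every summand $P(y)$ in Theorem \ref{thm-shifted-1} satisfies $P(-y) = (-1)^n P(y)$, and since the coefficients $1/n$ and $B_{2k}(1/2)/(k\,2^{2k+2})$ are constants independent of $x$, summation preserves the relation and gives $\tilde{B}_n(-x) = (-1)^n \tilde{B}_n(x)$. I do not anticipate a genuine obstacle here: the only mildly non-routine ingredient is the Gegenbauer parity, and that is a one-line consequence of the generating function. Indeed the statement is essentially immediate from the already-established Theorem \ref{thm-shifted-1}, which is precisely why it is phrased as a corollary rather than proved from scratch.
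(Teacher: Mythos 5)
Your proposal is correct and follows exactly the paper's route: the paper proves this corollary by the one-line observation that $T_{n}$ and $C_{n-2k}^{(2k)}$ both have the same parity as $n$, so the representation in Theorem \ref{thm-shifted-1} immediately transfers that parity to $\tilde{B}_{n}$. You merely supply the standard justifications of the two parity facts (via $T_{n}(\cos\theta)=\cos n\theta$ and the Gegenbauer generating function) that the paper leaves implicit.
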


\section{The Euler case}
\setcounter{equation}{0}
\label{sec-euler}

This section describes a 
parallel treatment of the Euler polynomial $E_{n}(x)$ 
defined by the generating function 
\begin{equation}
\sum_{n=0}^{\infty} E_{n}(x) \frac{t^{n}}{n!} = \frac{2e^{tx}}{e^{t}+1},
\label{gen-fun-eulernum}
\end{equation}
\noindent
Their umbrae is 
\begin{equation}
{\rm{eval}} \left\{ \exp( t \E(x) ) \right \} = \frac{2e^{tx}}{e^{t}+1}.
\label{umbra-eulerpoly}
\end{equation}
\noindent
The Euler numbers are defined by 
\begin{equation}
\sum_{n=0}^{\infty} E_{n} \frac{t^{n}}{n!} = \frac{2e^{t}}{e^{2t}+1},
\label{gen-fun-euler}
\end{equation}
\noindent
and they appear as 
\begin{equation}
E_{n} = 2^{n} E_{n} \left( \tfrac{1}{2} \right).
\label{euler-form2}
\end{equation}
\noindent
Their umbra $\E$ is 
\begin{equation}
{\rm{eval}}( \exp(z \E) ) = {\rm{ sech }}(z)
\end{equation}
\noindent
and the Euler numbers are expressed as 
\begin{equation}
E_{n} = 2^{n} \left( \E + \tfrac{1}{2} \right)^{n},
\end{equation}
\noindent
which is an umbral equivalent of \eqref{euler-form2}.

The next statement is the analogue of Theorem \ref{density-B}. 

\begin{theorem}
\label{density-E}
There exists a real valued 
random variable $L_{E}$ with probability density $f_{L_{E}}(x)$
such that, for all admissible functions $h$, 
\begin{equation}
{\rm{eval}}\{ h(\mathfrak{E}(x)) \} = 
\mathbb{E} \left[ h( x - 1/2 + i L_{E}) \right]
\label{eval-98E}
\end{equation}
\noindent
where the expectation is defined in \eqref{int-expec}. The density 
of $L_{E}$ is given by 
\begin{equation}
f_{L_{E}}(x) = {\rm{sech}}(\pi x), \quad \text{ for } 
x \in \mathbb{R}.
\label{form-le}
\end{equation}
In particular,
\begin{equation}
{\rm{eval}}\{ \exp( \mathfrak{E}(x)) \} = 
\mathbb{E} \left[ it( x - 1/2 + i L_{E}) \right]
\end{equation}
\noindent
and 
\begin{equation}
E_{n}(x) = \mathbb{E}\left[ ( x - \tfrac{1}{2} + i L_{E} )^{n} \right].
\label{umbra-euler}
\end{equation}
\end{theorem}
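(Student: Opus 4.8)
The plan is to mirror the proof of Theorem \ref{density-B} verbatim, since the Euler umbra \eqref{umbra-eulerpoly} plays exactly the same structural role as the Bernoulli umbra. First I would specialize the asserted identity \eqref{eval-98E} to the exponential test function $h(y) = \exp(ty)$. The left-hand side then becomes ${\rm{eval}}\{ \exp(t\mathfrak{E}(x)) \} = 2e^{tx}/(e^{t}+1)$ by \eqref{umbra-eulerpoly}, while the right-hand side factors as $e^{t(x-1/2)}\, \mathbb{E}[\exp(itL_{E})]$. Equating the two and cancelling the common factor $e^{t(x-1/2)}$ (note that the $x$-dependence disappears, which is the analogue of putting $x=0$ in Theorem \ref{density-B}) forces the characteristic function of $L_{E}$ to be
\[
\mathbb{E}\left[ \exp(itL_{E}) \right] = \frac{2e^{t/2}}{e^{t}+1} = {\rm{sech}}\left( \frac{t}{2} \right).
\]

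Next, assuming as in the Bernoulli case that the law of $L_{E}$ is symmetric, I would rewrite this as the cosine-transform equation
\[
\int_{-\infty}^{\infty} \cos(tu)\, f_{L_{E}}(u) \, du = {\rm{sech}}\left( \frac{t}{2} \right),
\]
and invert it by appealing to the standard Fourier transform of the hyperbolic secant, namely $\int_{-\infty}^{\infty} {\rm{sech}}(au) \cos(tu)\, du = (\pi/a)\, {\rm{sech}}(\pi t/(2a))$, which is the companion to entry $3.982.1$ of \cite{gradshteyn-2007a} already used above. Taking $a = \pi$ makes the right-hand side equal to ${\rm{sech}}(t/2)$, which identifies the density as $f_{L_{E}}(u) = {\rm{sech}}(\pi u)$, exactly \eqref{form-le}. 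A quick consistency check is that setting $t=0$ gives $\int_{-\infty}^{\infty} {\rm{sech}}(\pi u)\, du = 1$, so the function is indeed a probability density.

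Finally, the two ``in particular'' assertions follow immediately. The exponential statement is just the specialization $h(y) = \exp(ty)$ already employed to extract the characteristic function, and the integral representation \eqref{umbra-euler} comes from taking $h(y) = y^{n}$ together with the evaluation ${\rm{eval}}\{ \mathfrak{E}^{n}(x) \} = E_{n}(x)$, which yields $E_{n}(x) = \mathbb{E}[(x - \tfrac{1}{2} + iL_{E})^{n}]$. The only genuinely non-formal point is the uniqueness of the density from its characteristic function; here the symmetry assumption together with the injectivity of the cosine transform are invoked exactly as in Theorem \ref{density-B}. I do not anticipate any real obstacle beyond locating the correct secant transform in the tables, the entire argument being a direct transcription of the Bernoulli computation with ${\rm{sech}}^{2}$ replaced by ${\rm{sech}}$.
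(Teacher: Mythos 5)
Your proposal is correct and follows essentially the paper's own proof: the paper disposes of this theorem in one line by saying the argument mirrors Theorem \ref{density-B}, with entry $3.981.3$ of \cite{gradshteyn-2007a}, $\int_{0}^{\infty} {\rm{sech}}(ax)\cos(xt)\,dx = \tfrac{\pi}{2a}\,{\rm{sech}}\left(\tfrac{\pi t}{2a}\right)$, replacing the ${\rm{sech}}^{2}$ entry, which is exactly the cosine transform you invoke (with $a=\pi$, after doubling to the full line by symmetry). Your extraction of the characteristic function $\mathbb{E}\left[\exp(itL_{E})\right] = 2e^{t/2}/(e^{t}+1) = {\rm{sech}}(t/2)$ from the test function $h(y)=\exp(ty)$, the symmetry assumption, and the specializations $h(y)=y^{n}$ for \eqref{umbra-euler} reproduce the intended argument step for step.
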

\begin{proof}
The proof is similar to the Bernoulli case in Theorem \ref{density-B}. In this 
case, entry $3.981.3$ of \cite{gradshteyn-2007a}: 
\begin{equation}
\int_{0}^{\infty} {\rm{sech}}(ax) \cos(xt) \, dx = \frac{\pi}{2a} 
{\rm{sech}}\left( \frac{\pi t}{2a} \right)
\end{equation}
\noindent
is employed.
\end{proof}

\begin{note}
The analogue of Example \ref{umbra-psi} is 
\begin{equation}
{\rm{eval}} \left\{ \log \mathfrak{E}(x) \right\} = 
\log 2 + 2 \log \Gamma \left( \tfrac{x+1}{2} \right) -
2 \log \Gamma \left( \tfrac{x}{2} \right), \quad \text{ for } x > \tfrac{1}{2}
\end{equation}
\noindent
and differentiation produces 
\begin{equation}
{\rm{eval}}\left\{ \E^{-k}(x) \right \} = \frac{(-1)^{k-1}}{(k-1)!} 
2 \beta^{(k-1)}(x), \quad \text{ for } x > \tfrac{1}{2},
\label{beta-k}
\end{equation}
\noindent
with 
\begin{equation}
\beta(x) = \frac{1}{2} \left( \psi \left( \frac{x+1}{2} \right) - 
\psi \left( \frac{x}{2} \right)  \right)
\end{equation}
\noindent 
the beta function on page $906$ of \cite{gradshteyn-2007a}. 
\noindent
The proofs of all these results are similar to those presented for the 
Bernoulli case. 
\end{note}

It is natural to consider now the modified Euler numbers 
\begin{equation}
E_{n}^{*}=\sum_{r=0}^{n}\binom{n+r}{2r}\frac{n}{n+r}E_{r}, \quad n > 0.
\end{equation}
Symbolic experimentation suggested the next statement. The proof of the 
next statement follows the 
same ideas as in the Bernoulli case.

\begin{theorem}
The odd subsequence of the modified Euler numbers $\{E_{2n+1}^{*} \}$ is a 
periodic sequence of period $3$, with values  $\{ 1, \, -2, \, 1 \}$.
\end{theorem}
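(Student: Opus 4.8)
The plan is to mirror the three-step Bernoulli argument of Theorem \ref{gf-bstar}, Theorem \ref{gf-odd11} and Corollary \ref{period-6}, with the beta function $\beta$ playing the role that the digamma function $\psi$ played there. First I would record the umbral analogue of Lemma \ref{cheby-umbral}. Since the Euler number umbra satisfies ${\rm eval}\{\E^{r}\} = E_{r}$, Lemma \ref{cheby-12} applied with $x$ replaced by $\E$ gives $\sum_{r=0}^{n}\binom{n+r}{2r}\frac{\E^{r}}{n+r} = \frac{1}{n}T_{n}(\frac{\E}{2}+1)$ inside ${\rm eval}$, and the factor $n$ in the definition of $E_{n}^{*}$ cancels the $1/n$, leaving
\begin{equation*}
E_{n}^{*} = {\rm eval} \left\{ T_{n} \left( \tfrac{1}{2}\E + 1 \right) \right\}.
\end{equation*}
This cancellation is exactly the feature that forces the extra factor $n$ into the definition of the modified Euler numbers.

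Next I would produce the generating function $F_{E^{*}}(z) = \sum_{n\ge 1} E_{n}^{*} z^{n}$. Summing the Chebyshev generating function \eqref{gf-tn} against the displayed identity and performing the elementary partial-fraction reduction gives
\begin{equation*}
\sum_{n=0}^{\infty} E_{n}^{*} z^{n} = \frac{1}{2} + \frac{1-z^{2}}{2} \, {\rm eval} \left\{ \frac{1}{(1-z)^{2} - z \E } \right\}.
\end{equation*}
To evaluate the remaining umbral expression I would write the number umbra as $\E = 2\,\E(\tfrac{1}{2})$ (equivalent to \eqref{euler-form2}), use the shift rule $\E(x)-a=\E(x-a)$ furnished by Theorem \ref{density-E}, and the case $k=1$ of \eqref{beta-k}, namely ${\rm eval}\{\E^{-1}(x)\}=2\beta(x)$. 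Writing $W:=z+1/z$, this yields ${\rm eval}\{((1-z)^{2}-z\E)^{-1}\}=-\tfrac{1}{z}\beta(\tfrac{3}{2}-\tfrac{1}{2}W)$ and hence the closed form
\begin{equation*}
F_{E^{*}}(z) = - \frac{1}{2} - \frac{1-z^{2}}{2z} \, \beta \left( \tfrac{3}{2} - \tfrac{1}{2} W \right),
\end{equation*}
the exact analogue of Theorem \ref{gf-bstar}.

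The final step extracts the odd part. The Euler umbra is symmetric, so ${\rm eval}$ is unchanged under $\E\mapsto-\E$; replacing $z$ by $-z$ and invoking this symmetry (just as Lemma \ref{cool-sum} is used in Theorem \ref{gf-odd11}) gives $F_{E^{*}}(-z)=-\tfrac{1}{2}-\tfrac{1-z^{2}}{2z}\beta(-\tfrac{1}{2}-\tfrac{1}{2}W)$, so that
\begin{equation*}
\sum_{n=0}^{\infty} E_{2n+1}^{*} z^{2n+1} = \frac{1-z^{2}}{4z} \left[ \beta \left( -\tfrac{1}{2} - \tfrac{1}{2} W \right) - \beta \left( \tfrac{3}{2} - \tfrac{1}{2} W \right) \right].
\end{equation*}
The two arguments now differ by the integer $2$, which is the whole point. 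From $\beta(x)=\tfrac{1}{2}[\psi(\tfrac{x+1}{2})-\psi(\tfrac{x}{2})]$ one reads off the functional equation $\beta(x)+\beta(x+1)=1/x$, and iterating it gives $\beta(x+2)-\beta(x)=\tfrac{1}{x+1}-\tfrac{1}{x}$. With $x=-\tfrac{1}{2}-\tfrac{1}{2}W$ the bracket collapses to $4/(W^{2}-1)$, and since $W^{2}-1=(z^{4}+z^{2}+1)/z^{2}$ this produces the rational function
\begin{equation*}
\sum_{n=0}^{\infty} E_{2n+1}^{*} z^{2n+1} = \frac{z - z^{3}}{1 + z^{2} + z^{4}} = \frac{(z - z^{3})(1-z^{2})}{1 - z^{6}}.
\end{equation*}
Expanding $(z-2z^{3}+z^{5})\sum_{k\ge 0} z^{6k}$ shows that the reduced denominator $1-z^{6}$ corresponds to period $3$ in the index $n$ and exhibits the repeating block $\{1,-2,1\}$.

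I expect the main obstacle to be rigor rather than discovery: the evaluation \eqref{beta-k} is stated only for $x>\tfrac{1}{2}$, whereas the argument $\tfrac{3}{2}-\tfrac{1}{2}W$ runs off to $-\infty$ as $z\to 0$, so the formula for $F_{E^{*}}$ must be understood as an identity of meromorphic functions obtained by analytic continuation of $\beta$ (equivalently, as a formal power series identity), exactly as in the Bernoulli case. The one spot where careful sign-bookkeeping is essential is the reflection $\E\mapsto-\E$ in the odd part: it is precisely what converts a \emph{sum} of two $\beta$-values symmetric about $\tfrac{1}{2}$ — which is genuinely transcendental — into a \emph{difference} of $\beta$-values whose arguments differ by an integer, and only the latter collapses to a rational function.
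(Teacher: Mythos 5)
Your proposal is correct and is essentially the paper's own argument: the paper proves this theorem only by remarking that it ``follows the same ideas as in the Bernoulli case'' (i.e.\ the template of Theorems \ref{gf-bstar} and \ref{gf-odd11}), and you carry out exactly that template with $\beta$ in place of $\psi$, arriving at the odd generating function $(z-2z^{3}+z^{5})/(1-z^{6})$, which gives period $3$ with values $\{1,-2,1\}$ (consistent with direct computation: $E_{1}^{*}=1$, $E_{3}^{*}=-2$, $E_{5}^{*}=1$, $E_{7}^{*}=1$). The caveat you flag about using \eqref{beta-k} outside $x>\tfrac{1}{2}$ is genuine but harmless in precisely the way you suggest: the rigorous probabilistic evaluation replaces each of your $\beta$-values by its reflection, which differs by a $\pi\csc$ term, and since your two arguments differ by the integer $2$ these reflection terms are equal and cancel in the odd-part difference, so the same rational function results — the same level of rigor the paper itself employs in the Bernoulli computation.
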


Define the modified Euler polynomials by 
\begin{equation}
E_{n}^{*}(x)=\sum_{r=0}^{n}\binom{n+r}{2r}\frac{n}{n+r}E_{r}(x).
\end{equation}
\noindent
Then the even-order numbers $E_{2n}^{*}(0)$ have period $12$ with values 

\begin{center}
\begin{tabular}{|c||c|c|c|c|c|c|}
\hline 
$n$ mod $12$ & 0 & 2 & 4 & 6 & 8 & 10\tabularnewline
\hline 
\hline 
$E_{n}^{*}(0)$ & 1 & 0 & -2 & 3 & -2 & 0\tabularnewline
\hline 
\end{tabular}
\par\end{center}

The proof of this result follows the same steps as in the Bernoulli case.

\medskip

The final statement in this section is the analogue of Theorem \ref{thm-symm}.

\begin{theorem}
The modified Euler polynomials  satisfy 
\begin{equation}
E_{n}^{*}(-x-3) = (-1)^{n} E_{n}^{*}(x).
\end{equation}
\end{theorem}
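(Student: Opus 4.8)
The plan is to mirror the second (Chebyshev--probabilistic) proof of Theorem \ref{thm-symm}, which is the cleanest of the three given and transfers to the Euler setting with essentially no extra work. The two ingredients I would first establish are the Euler analogues of Lemma \ref{cheby-umbral} and Lemma \ref{modi-ver1}.

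First I would record the Chebyshev representation. Multiplying the identity of Lemma \ref{cheby-12} by $n$ gives
\[
\sum_{r=0}^{n} \binom{n+r}{2r} \frac{n}{n+r} x^{r} = T_{n}\left( \frac{x}{2} + 1 \right),
\]
and replacing $x^{r}$ by the Euler umbra $\E^{r}(x)$ (legitimate since $T_{n}$ is a polynomial, so only linearity of \emph{eval} is used) yields
\[
E_{n}^{*}(x) = {\rm{eval}}\left\{ T_{n}\left( \frac{\E(x)}{2} + 1 \right) \right\}.
\]
The extra factor $n$ in the definition of $E_{n}^{*}(x)$ is exactly what absorbs the $1/n$ appearing in Lemma \ref{cheby-12}. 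Next, applying Theorem \ref{density-E} with the admissible polynomial $h(u) = T_{n}\left( \tfrac{u}{2} + 1 \right)$ produces the Euler analogue of Lemma \ref{modi-ver1},
\[
E_{n}^{*}(x) = \mathbb{E}\left[ T_{n}\left( \frac{x}{2} + \frac{3}{4} + \frac{i L_{E}}{2} \right) \right].
\]

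With this representation in hand the symmetry is immediate. Substituting $-x-3$ for $x$ and using the parity relation $T_{n}(-z) = (-1)^{n} T_{n}(z)$ gives
\[
E_{n}^{*}(-x-3) = \mathbb{E}\left[ T_{n}\left( -\frac{x}{2} - \frac{3}{4} + \frac{i L_{E}}{2} \right) \right] = (-1)^{n} \mathbb{E}\left[ T_{n}\left( \frac{x}{2} + \frac{3}{4} - \frac{i L_{E}}{2} \right) \right].
\]
Finally, since the density \eqref{form-le} of $L_{E}$ is even, $L_{E}$ and $-L_{E}$ have the same distribution, so the last expectation is unchanged when $-iL_{E}/2$ is replaced by $+iL_{E}/2$. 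This returns $(-1)^{n} E_{n}^{*}(x)$ and completes the argument.

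I do not expect a genuine obstacle here: the role played in the Bernoulli case by the constitutive relation $-\B = \B + 1$ is played in the Euler case by the even symmetry of $L_{E}$ (equivalently, the evenness of $\text{sech}$), which makes the Euler version slightly simpler. The only points requiring care are bookkeeping: verifying that the shift constant $\tfrac{3}{4}$ and the scaling $\tfrac{1}{2}$ on $iL_{E}$ come out exactly as in Lemma \ref{modi-ver1}, and confirming that $T_{n}$ has the same parity as $n$. One could instead run the generating-function argument from the first proof of Theorem \ref{thm-symm}, but that would require first deriving the Euler analogue of $F_{B^{*}}(x,z)$, which is not recorded in the excerpt; the probabilistic route avoids this and relies only on Lemma \ref{cheby-12} and Theorem \ref{density-E}.
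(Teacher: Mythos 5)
Your proof is correct. The paper actually prints no argument for this theorem: it appears as the final statement of Section \ref{sec-euler}, covered only by the blanket remark that the Euler-case proofs ``follow the same steps as in the Bernoulli case.'' Your write-up supplies precisely the intended argument, namely the second (Chebyshev) proof of Theorem \ref{thm-symm} transplanted to the Euler setting: the representation $E_{n}^{*}(x) = \mathrm{eval}\left\{ T_{n}\left( \tfrac{1}{2}\E(x) + 1 \right) \right\}$ is the correct analogue of Lemma \ref{cheby-umbral} --- the factor $n$ in the definition of $E_{n}^{*}$ absorbing the $1/n$ of Lemma \ref{cheby-12}, as you note --- and Theorem \ref{density-E} converts it to $\mathbb{E}\left[ T_{n}\left( \tfrac{x}{2} + \tfrac{3}{4} + \tfrac{i L_{E}}{2} \right) \right]$, the analogue of Lemma \ref{modi-ver1}. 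Your one genuine departure from the Bernoulli template is also the one genuinely required: the paper's umbral proof closes with the constitutive relation $-\B = \B + 1$, which has no Euler counterpart, and replacing it by the evenness of the density $\mathrm{sech}(\pi x)$ of $L_{E}$ (equivalently, the symmetry $E_{n}(1-x) = (-1)^{n}E_{n}(x)$) is exactly the right substitute; the change of variable $u \mapsto -u$ in the defining integral makes this step rigorous since $T_{n}$ is a polynomial and all moments of $L_{E}$ exist. Incidentally, the parity identity you invoke, $T_{n}(-z) = (-1)^{n}T_{n}(z)$, is misprinted in the paper's second proof of Theorem \ref{thm-symm} as $T_{n}(z) = (-1)^{n}T_{n}(z)$; your version is the correct one.
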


\section{The duplication formula for Zagier polynomials}
\setcounter{equation}{0}
\label{sec-duplication}

The identity 
\begin{equation}
B_{k}(mx) = m^{k-1} \sum_{k=0}^{m-1} B_{k} \left( x + \frac{k}{m} \right)
\end{equation}
\noindent
was given by J. L. Raabe in $1851$. The special case $m=2$ gives the
duplication formula for Bernoulli polynomials
\begin{equation}
2B_{k}(2x) = 2^{k} B_{k}(x) + 2^{k} B_{k} \left( x + \tfrac{1}{2} \right).
\label{raabe-2}
\end{equation}
\noindent
Summing over $k$ yields
\begin{equation}
2 \sum_{k=0}^{n} \binom{n+k}{2k} \frac{B_{k}(2x)}{n+k} = 
\sum_{k=0}^{n} \binom{n+k}{2k} \frac{2^{k}B_{k}(x)}{n+k} +
\sum_{k=0}^{n} \binom{n+k}{2k} \frac{2^{k}B_{k} \left(x + \tfrac{1}{2}
\right)}{n+k}.
\label{summing-1}
\end{equation}
\noindent
An umbral interpretation of this identity leads to a duplication 
formula for the Zagier polynomials. This result is expressed in terms 
of the umbral composition defined next.

\begin{definition}
Given two sequences of polynomials $P = \{ P_{n}(x) \}$ and 
$Q = \{ Q_{n}(x) \}$, their
\textit{umbral composition} is defined as 
\begin{equation}
(P \circ Q)_{n}(x) = \sum_{k=0}^{n} p_{k,n} Q_{k}(x),
\end{equation}
\noindent
where $p_{k,n}$ is the coefficient of $x^{k}$ in $P_{n}(x)$.
\end{definition}

The use of  umbral composition is clarified in the next lemma. 

\begin{lemma}
\label{comp-umbrae}
Let $P$ and $Q$ be polynomials and assume 
\begin{equation}
P_{n}(x) = {\rm{eval}} \left\{ \left( x + \pe \right)^{n} \right\} \text{ and }
Q_{n}(x) = {\rm{eval}} \left\{ \left(x + \Q \right)^{n} \right\}.
\end{equation}
\noindent
Then 
\begin{equation}
(P \circ Q)_{n}(x) =  {\rm{eval}} \left\{ \left(x + \pe + \Q \right)^{n} 
\right\}.
\end{equation}
\end{lemma}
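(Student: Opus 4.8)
The plan is to unwind the definition of umbral composition and then recombine the resulting sum via the binomial theorem, the decisive point being that the distinct umbrae $\pe$ and $\Q$ are to be treated as independent under the evaluation map.

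First I would extract the coefficients $p_{k,n}$. Expanding $P_{n}(x) = {\rm{eval}} \{ (x+\pe)^{n} \}$ by the binomial theorem, and using that ${\rm{eval}}$ is linear while $x$ is constant with respect to the $\pe$-umbra, gives
\[
P_{n}(x) = \sum_{k=0}^{n} \binom{n}{k} {\rm{eval}}\{ \pe^{n-k} \}\, x^{k},
\]
so that the coefficient of $x^{k}$ is $p_{k,n} = \binom{n}{k}\, {\rm{eval}}\{ \pe^{n-k} \}$.

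Next, substituting this into the definition $(P \circ Q)_{n}(x) = \sum_{k=0}^{n} p_{k,n} Q_{k}(x)$ and inserting $Q_{k}(x) = {\rm{eval}}\{ (x+\Q)^{k} \}$ yields
\[
(P \circ Q)_{n}(x) = \sum_{k=0}^{n} \binom{n}{k}\, {\rm{eval}}\{ \pe^{n-k} \}\, {\rm{eval}}\{ (x+\Q)^{k} \}.
\]
The crucial step is to merge the two evaluation operators into a single one. Since $\pe$ and $\Q$ are distinct umbrae, they are regarded as independent; concretely, ${\rm{eval}}\{ \pe^{n-k} (x+\Q)^{k} \} = {\rm{eval}}\{ \pe^{n-k} \}\, {\rm{eval}}\{ (x+\Q)^{k} \}$, which in the probabilistic formalism of Section \ref{sec-umbral} is just the factorization of the expectation of a product of functions of independent random variables. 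Applying this identity and then the linearity of ${\rm{eval}}$ recombines the sum as
\[
(P \circ Q)_{n}(x) = {\rm{eval}} \left\{ \sum_{k=0}^{n} \binom{n}{k}\, \pe^{n-k} (x+\Q)^{k} \right\} = {\rm{eval}}\{ (\pe + x + \Q)^{n} \},
\]
which is exactly the claimed identity.

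I expect the only genuine obstacle to be the justification of the factorization of ${\rm{eval}}$ across the two independent umbrae $\pe$ and $\Q$; once that is granted (either by fiat in the classical umbral setup or via independence of the associated random variables in the probabilistic model), everything else is routine bookkeeping with the binomial theorem.
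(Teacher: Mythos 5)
Your proof is correct and rests on the same key principle as the paper's: that the evaluation map treats the distinct umbrae $\pe$ and $\Q$ independently. The paper organizes this as a staged evaluation, ${\rm{eval}}_{\pe,\Q}\left\{ (x+\pe+\Q)^{n} \right\} = {\rm{eval}}_{\Q}\left\{ P_{n}(x+\Q) \right\} = \sum_{k=0}^{n} p_{k,n}Q_{k}(x)$, whereas you unwind the identical fact at the level of monomials via the binomial theorem and the factorization ${\rm{eval}}\{\pe^{n-k}(x+\Q)^{k}\} = {\rm{eval}}\{\pe^{n-k}\}\,{\rm{eval}}\{(x+\Q)^{k}\}$; the two computations are equivalent.
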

\begin{proof}
Denoting the relevant umbrae by a subindex, then
\begin{eqnarray*}
{\rm{eval}}_{\pe, \Q} \left\{ \left( x + \pe + \Q \right)^{n} \right\} & = & 
{\rm{eval}}_{\Q} \left\{ P_{n}(x + \Q)  \right\} \\
& = & \sum_{k=0}^{n} p_{k,n}Q_{k}(x) \\
& = & (P \circ Q)_{n}(x),
\end{eqnarray*}
\noindent
as claimed.
\end{proof}

\medskip

Consider now the Bernoulli and Euler umbrae 
\begin{equation}
{\rm{eval}} \left\{ \exp( t \B ) \right\} = \frac{t}{e^{t}-1} 
\text{ and }
{\rm{eval}} \left\{ \exp( t \E ) \right\} = \frac{2}{e^{t}+1} 
\end{equation}
\noindent
given in \eqref{bn-umbrae1} and \eqref{umbra-eulerpoly}, respectively. The 
identity 
\begin{equation}
{\rm{eval}} \left\{ \exp( t \B ) \right\}  \times 
{\rm{eval}} \left\{ \exp( t \E ) \right\} = 
{\rm{eval}} \left\{ \exp( 2 t  \B ) \right\} 
\end{equation}
\noindent
is written (at the umbrae level) as 
\begin{equation}
\B + \E = 2 \B.
\end{equation}
\noindent 
The first summand on the right of  \eqref{summing-1} contains the term 
\begin{eqnarray*}
2^{k}B_{k}(x) & = & {\rm{eval}} \left\{ 2^{k} ( x + \B)^{k}  \right\} \\
  & = & {\rm{eval}} \left\{ ( 2 x + 2 \B )^{k} \right\} \\
  & = & {\rm{eval}} \left\{ ( 2 x + \B  + \E )^{k} \right\} \\
  & = & {\rm{eval}} \left\{ ( \B  \circ \E )_{k}(2x) \right\}.
\end{eqnarray*}
\noindent
Lemma \ref{comp-umbrae} has been used in the last step. Similarly 
\begin{equation*}
2^{k}B_{k} \left( x + \tfrac{1}{2} \right)  = 
  {\rm{eval}} \left\{ ( \B  \circ \E )_{k}(2x+1) \right\}.
\end{equation*}
\noindent
Thus, \eqref{summing-1} reads
\begin{equation}
2 B_{n}^{*}(2x) = (B^{*} \circ E)_{n}(2x) + (B^{*} \circ E)_{n}(2x+1)
\end{equation}
\noindent
that can also be expressed in the form 
\begin{equation}
2 B_{n}^{*}(2x) = (B^{*} \circ E(x))_{n}(x) + 
(B^{*} \circ E \left( x + \tfrac{1}{2} \right))_{n} \left( x + \tfrac{1}{2} 
\right),
\end{equation}
\noindent
that is an analogue of \eqref{raabe-2} for the Zagier polynomials.

\medskip

\no
\textbf{Acknowledgments}. The second author acknowledges the 
partial support of NSF-DMS 1112656. The first author is a post-doctoral 
fellow funded in part by the same grant.  The authors wish to thank 
T. Amdeberhan for his valuable input into this paper. 


\begin{thebibliography}{10}

\bibitem{blissard-1861a}
J.~Blissard.
\newblock Theory of generic equations.
\newblock {\em Quart. J. Pure Appl. Math.}, 4:279--305, 1861.

\bibitem{boyadzhiev-2007a}
K.~Boyadzhiev.
\newblock A note on {B}ernoulli polynomials and solitons.
\newblock {\em Jour. Nonlinear Math. Phys.}, 14:174--178, 2007.

\bibitem{branden-2011a}
P.~Br\"and\'en.
\newblock Iterated sequences and the geometry of zeros.
\newblock {\em J. {R}eine {A}ngew. {M}ath.}, 658:115--131, 2011.

\bibitem{brychkov-2008a}
Y.~A. Brychkov.
\newblock {\em Handbook of {S}pecial {F}unctions. {D}erivatives, {I}ntegrals,
  {S}eries and {O}ther {F}ormulas}.
\newblock Taylor and {F}rancis, {B}oca {R}aton, {F}lorida, 2008.

\bibitem{dixit-2012b}
A.~Dixit, V.~Moll, and C.~Vignat.
\newblock The {Z}agier modification of {B}ernoulli numbers and a polynomial
  extension. {P}art {II}.
\newblock {\em Preprint}, 2012.

\bibitem{gessel-2003a}
I.~Gessel.
\newblock Applications of the classical umbral calculus.
\newblock {\em Algebra Universalis}, 49:397--434, 2003.

\bibitem{gradshteyn-2007a}
I.~S. Gradshteyn and I.~M. Ryzhik.
\newblock {\em Table of {I}ntegrals, {S}eries, and {P}roducts}.
\newblock Edited by A. Jeffrey and D. Zwillinger. Academic Press, New York, 7th
  edition, 2007.

\bibitem{grosset-2005a}
M.~P. Grosset and A.~P Veselov.
\newblock Bernoulli numbers and solitions.
\newblock {\em Jour. Nonlinear Math. Phys.}, 12:469--474, 2005.

\bibitem{kervaire-1963a}
M.~Kervaire and J.~Milnor.
\newblock Groups of homotopy spheres: I.
\newblock {\em Ann. Math.}, 77:504--537, 1963.

\bibitem{levine-1983a}
J.~P. Levine.
\newblock Lectures on groups of homotopy spheres.
\newblock In A.~Ranicki N. Levitt~F. Quinn, editor, {\em Alegebraic and
  {G}eometric {T}opology. Lecture {N}otes in {M}athematics, $1126$}, pages
  62--95. Springer, Berlin - Heidelberg - New York, 1983.

\bibitem{macmillan-2011a}
K.~Mac{M}illan and J.~Sondow.
\newblock Proofs of power sum and binomial coefficient congruences via
  {P}ascal's identity.
\newblock {\em Amer. Math. Monthly}, 118:549--551, 2011.

\bibitem{olver-2010a}
F.~W.~J. Olver, D.~W. Lozier, R.~F. Boisvert, and C.~W. Clark, editors.
\newblock {\em {NIST} {H}andbook of {M}athematical {F}unctions}.
\newblock Cambridge {U}niversity {P}ress, 2010.

\bibitem{petkovsek-1996a}
M.~Petkov\v{s}ek, H.~Wilf, and D.~Zeilberger.
\newblock {\em A=B}.
\newblock A. K. Peters, Ltd., 1st edition, 1996.

\bibitem{ribenboim-1999a}
P.~Ribenboim.
\newblock {\em Fermat's {L}ast {T}heorem for {A}mateurs}.
\newblock Springer-Verlag, New York, 1st edition, 1999.

\bibitem{riordan-1968a}
J.~Riordan.
\newblock {\em Combinatorial Identities}.
\newblock Wiley, New York, 1st edition, 1968.

\bibitem{spanier-1987a}
J.~Spanier and K.~Oldham.
\newblock {\em An atlas of functions}.
\newblock Hemisphere {P}ublishing Co., 1st edition, 1987.

\bibitem{stewarti-1979a}
I.~Stewart and D.~Tall.
\newblock {\em Algebraic {N}umber {T}heory}.
\newblock Chapman and {H}all, {L}ondon, 1st edition, 1979.

\bibitem{temme-1996a}
N.~M. Temme.
\newblock {\em Special {F}unctions. {A}n introduction to the {C}lassical
  {F}unctions of {M}athematical {P}hysics}.
\newblock John Wiley and sons, New York, 1996.

\bibitem{touchard-1956a}
J.~Touchard.
\newblock Nombres exponentieles et nombres de {B}ernoulli.
\newblock {\em Canad. J. {M}ath.}, 8:305--320, 1956.

\bibitem{yu-2012a}
Y.~P. Yu.
\newblock Bernoulli operator and {R}iemann's {Z}eta function.
\newblock {\em \text{ArXiv: math-NT}/1011.3352$\times$v3, 19541}, 2012.

\bibitem{zagier-1998a}
D.~Zagier.
\newblock A modified {B}ernoulli number.
\newblock {\em Nieuw {A}rchief voor {W}iskunde}, 16:63--72, 1998.

\end{thebibliography}

\end{document}